\renewcommand\theequation%
\newtheorem{theorem}{Theorem}[section]
\newtheorem{remark}[theorem]{Remark}
\newtheorem{lemma}[theorem]{Lemma}
\newcommand{\RomanNumeralCaps}[1]{\MakeUppercase{\romannumeral #1}}
\newcommand{\R}{\mathbb{R}}
\newcommand{\C}{\mathbb{C}}
\newcommand{\N}{\mathbb{N}}
\newcommand{\g}{\mathrm{g}}
\newcommand{\s}{\mathbb{S}}
\newcommand{\p}{\partial}
\newcommand{\norm}[1]{\|#1\|}
\newcommand{\abs}[1]{|#1|}
\newcommand{\seq}[1]{\langle#1\rangle}
\newcommand{\To}{\longrightarrow}
\newcommand{\tild}{\widetilde}
\newcommand{\dive}{\textrm{div}}
\newcommand{\curl}{\textrm{curl}}
\newcommand{\LL}{\mathcal{L}}
\newcommand{\Hs}{H_{\mathrm{scl}}}
\newcommand{\les}{{\,\lesssim\,}}
\newcommand{\ama}{\alpha}
\begin{document}

\title[Inverse problem for the Polyharmonic operator]{Stable determination of a second order perturbation of the polyharmonic operator by boundary measurements}
\author[N.~Aroua]{Nesrine Aroua}
\author[M.~Bellassoued]{Mourad Bellassoued}
\address{N.~Aroua. Universit\'e de Tunis El Manar, Ecole Nationale d'ing\'enieurs de Tunis, ENIT-LAMSIN, B.P. 37, 1002 Tunis, Tunisia}
\email{nesrine.aroua@enit.utm.tn }

\address{M.~Bellassoued. Universit\'e de Tunis El Manar, Ecole Nationale d'ing\'enieurs de Tunis, ENIT-LAMSIN, B.P. 37, 1002 Tunis, Tunisia}
\email{mourad.bellassoued@enit.utm.tn }

\date{\today}
%%%%%%%%%%%%%%%%%%%%%%%%%%%%%%%%%
\subjclass[2010]{35R30, 31B20, 31B30, 35J40.}
\keywords{Inverse problems, Dirichlet-to-Neumann map, stability estimate, polyharmonic operator.}

%%%%%%%%%%%%%%%%%%%%%%%%%%%%%%%%%%%%%%%%%%%%%%%
\begin{abstract}
In this paper, we consider the inverse boundary value problem for the polyharmonic operator. We prove that the second order perturbations are uniquely determined by the corresponding Dirichlet to Neumann map. More precisely, we show in dimension $n \geq 3$, a logarithmic type stability estimate for the inverse problem under consideration.  
\end{abstract}
\maketitle
%%%%%%%%%%%%%%%%%%%%%%%%%%%%%%%
\section{Introduction and main results}
\subsection{Introduction}
The focus of this paper is the determination of a second order perturbation of the polyharmonic operator from the Dirichlet to Neumann map. We start with some notations. Let $\Omega\subset\R^n$, $n \geq 3$, be a bounded simply connected domain of $\R^n$ with $C^\infty$ boundary $\Gamma=\p\Omega$ and let $m \geq 2$. Given a symmetric matrix $A=(A_{jk})$, a vector field $B=(B_j)$ and a potential $q$, we consider the perturbed polyharmonic operator formally given by
%\begin{equation}
%\LL_{A,B,q}(x,D)u=0\quad \mathrm{in}\quad \Omega,
%\end{equation}
%where
\begin{align}\label{1.1}
\LL_{A,B,q}(x,D)&= (-\Delta)^m+A(x)D\cdot D+B(x)\cdot D+q(x)\cr
&=(-\Delta)^m+\sum_{j,k=1}^n A_{jk}(x)D_j D_k +\sum_{j=1}^n B_j(x)D_j+q(x),
\end{align}
where $D=i^{-1}\nabla$, $A\in W^{2,\infty}(\Omega,\C^{n^2})$, $B\in W^{1,\infty}(\Omega,\C^n)$ and $q\in L^\infty(\Omega,\C)$ are the perturbation coefficients. The operator $\LL_{A,B,q}(x,D)$ equipped with domain 
\begin{equation}\label{1.2}
\mathcal{D}(\LL_{A,B,q})=\{u\in H^{2m}(\Omega)\,:\, \gamma u:=\bigr(u,(-\Delta) u, \ldots, (-\Delta)^{m-1} u\bigr)|_{\Gamma}=0 \},
\end{equation}
is an unbounded closed operator on $L^2(\Omega)$ with purely discrete spectrum, see \cite{GG}.\\
For $m \geq 2$ and $r >0$, we denote by $\mathcal{H}^{m,r}(\Gamma)$ the product space
\begin{align*}
  \mathcal{H}^{m,r}(\Gamma)=\prod_{j=0}^{m-1}H^{2m-2j-r}(\Gamma ),  
\end{align*}
equipped with norm
\begin{align*}
 \|f\|_{ \mathcal{H}^{m,r}(\Gamma)}:=\sum_{j=0}^{m-1}\|f_j\|_{ {H}^{2m-2j-r}(\Gamma)},\quad f=(f_0, \ldots, f_{m-1}) \in \mathcal{H}^{m,r}(\Gamma).
\end{align*}
Assuming $0$ is not an eigenvalue of $\LL_{A,B,q}(x,D): \mathcal{D}(\LL_{A,B,q}) \rightarrow L^2(\Omega)$ and consider the boundary value problem with Navier boundary conditions
\begin{equation}\label{1.3}
\left\{\begin{array}{ll}
\LL_{A,B,q}(x,D)u =0 & \mathrm{in}\, \Omega,\cr
\gamma u  =f  &\mathrm{on}\, \Gamma,
\end{array}
\right.
\end{equation}
where $f=(f_0, \ldots, f_{m-1})\in \mathcal{H}^{m,\frac{1}{2}}(\Gamma)$. Then \eqref{1.3} has a unique solution $u:=u_f\in H^{2m}(\Omega)$ (see Lemma \ref{L0.1} below) and the boundary measurements are given by the Dirichlet to Neumann map (D-to-N), defined formally by
\begin{equation}\label{1.4}
\Lambda_{A,B,q}: f\mapsto  \widetilde{\gamma} u:=(\p_\nu u,\p_\nu(-\Delta) u, \ldots, \p_\nu(-\Delta)^{m-1} u)|_\Gamma,
\end{equation}
where $u\in H^{2m}(\Omega)$ satisfies the boundary value problem (\ref{1.3}). Here $\nu$ denotes the unit outer normal vector to the boundary $\Gamma$ at $x$. 
\medskip

Before stating our main result, we recall the following Lemma on the existence and uniqueness of a solution to the problem \eqref{1.3}, the proof is given by Lemma 5.13 in \cite{TH}.
\begin{lemma}\label{L0.1}
Let $A \in W^{2, \infty}(\Omega, \C^{n^2})$, $B \in W^{1, \infty}(\Omega, \C^n)$ and $q \in L^\infty(\Omega, \C)$. Suppose that $0$ is not an eigenvalue of $\LL_{A,B,q}(x, D)$ and $f \in \mathcal{H}^{m,\frac{1}{2}}(\Gamma)$. Then, there exists a unique solution $u \in H^{2m}(\Omega)$ to \eqref{1.3} satisfying
\begin{align*}
    \|u\|_{H^{2m}(\Omega)} \leq C \|f\|_{\mathcal{H}^{m,\frac{1}{2}}(\Gamma)}.
\end{align*}
Furthermore, we have $\widetilde{\gamma} u:=(\p_\nu u,\p_\nu(-\Delta) u, \ldots, \p_\nu(-\Delta)^{m-1} u)|_\Gamma \in \mathcal{H}^{m,\frac{3}{2}}(\Gamma) $ and there exists a constant $C > 0$ such that 
\begin{align*}
    \|\widetilde{\gamma} u\|_{\mathcal{H}^{m,\frac{3}{2}}(\Gamma)} \leq C \|f\|_{\mathcal{H}^{m,\frac{1}{2}}(\Gamma)}. 
\end{align*}
\end{lemma}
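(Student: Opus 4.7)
My plan is to reduce the perturbed boundary value problem to the unperturbed polyharmonic problem with Navier conditions, and then to invoke a Fredholm alternative so that the eigenvalue hypothesis yields both existence and the quantitative estimate.

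\emph{Step 1: unperturbed polyharmonic boundary value problem.} I would first establish existence, uniqueness and the $H^{2m}$-estimate for
\begin{equation*}
(-\Delta)^m u = F \text{ in } \Omega, \qquad \gamma u = f \text{ on } \Gamma,
\end{equation*}
with $F \in L^2(\Omega)$ and $f \in \mathcal{H}^{m,1/2}(\Gamma)$. The Navier conditions are designed precisely so that the polyharmonic equation decouples into a chain of scalar Poisson problems: setting $w_j := (-\Delta)^j u$ for $j = 0, \ldots, m-1$, one has $-\Delta w_j = w_{j+1}$ for $j < m-1$ and $-\Delta w_{m-1} = F$, each supplemented by the Dirichlet datum $w_j|_\Gamma = f_j$. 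Iterating classical elliptic regularity for the Dirichlet Laplacian from $j = m-1$ downward yields $w_j \in H^{2m-2j}(\Omega)$ with the expected norm bound; the Sobolev indices match exactly because $f_j \in H^{2m-2j-1/2}(\Gamma)$ is precisely the boundary regularity producing $H^{2m-2j}$ interior regularity at that stage.

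\emph{Step 2: Fredholm reduction.} Decomposing $u = u_0 + v$ with $u_0$ solving the unperturbed problem with data $f$, the remainder $v$ must satisfy $\LL_{A,B,q} v = -(A D\cdot D + B\cdot D + q) u_0 \in L^2(\Omega)$ with $\gamma v = 0$. Let $T : L^2(\Omega) \to H^{2m}(\Omega)$ denote the solution operator for $(-\Delta)^m w = g$, $\gamma w = 0$, furnished by Step 1, and set $P w := A D\cdot D w + B\cdot D w + q w$. Since $P$ only involves derivatives of order at most two and $m \geq 2$, the map $P : H^{2m}(\Omega) \to L^2(\Omega)$ is compact by Rellich's theorem. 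Hence $TP$ is compact on the closed subspace $V := \set{w \in H^{2m}(\Omega) : \gamma w = 0}$, so $I + TP$ is Fredholm of index zero on $V$. Its kernel consists of $w \in V$ with $\LL_{A,B,q} w = 0$, which is trivial by the eigenvalue hypothesis. Therefore $I + TP$ is boundedly invertible, and $v = -(I + TP)^{-1} T (A D\cdot D + B\cdot D + q) u_0$ satisfies $\|v\|_{H^{2m}(\Omega)} \les \|u_0\|_{H^{2m}(\Omega)} \les \|f\|_{\mathcal{H}^{m,1/2}(\Gamma)}$, which combined with Step 1 gives the first estimate.

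\emph{Step 3: trace regularity.} For $u \in H^{2m}(\Omega)$, one has $(-\Delta)^j u \in H^{2m-2j}(\Omega)$ for $j = 0, \ldots, m-1$, so the standard trace theorem delivers $\p_\nu (-\Delta)^j u \in H^{2m-2j-3/2}(\Gamma)$ with norm bounded by $\|u\|_{H^{2m}(\Omega)}$. Summing over $j$ yields $\widetilde{\gamma} u \in \mathcal{H}^{m,3/2}(\Gamma)$ with the announced estimate. The main technical obstacle is the careful Sobolev bookkeeping of Step 1 together with verifying that $TP$ is compact on the precise closed subspace $V$; once those points are secured, the Fredholm alternative closes the loop and the trace bound is immediate.
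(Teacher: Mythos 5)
Your proof is correct and self-contained. Note that the paper itself does not give a proof of this lemma; it simply cites Lemma 5.13 of \cite{TH}, so there is no internal argument to compare against. The route you take is the natural one: the Navier conditions decouple $(-\Delta)^m u = F$, $\gamma u = f$ into the chain $-\Delta w_j = w_{j+1}$, $w_j|_\Gamma = f_j$ with $w_m := F$, and descending from $j = m-1$ the Dirichlet Poisson estimates give $w_j \in H^{2m-2j}(\Omega)$ with the boundary Sobolev exponents $2m-2j-\tfrac12$ matching exactly at every stage. Your Fredholm reduction is also sound: writing $u = u_0 + v$ with $u_0$ the unperturbed solution and $T$ the zero-data solution operator, the equation becomes $(I + TP)v = -TP u_0$ on $V = \{w \in H^{2m}(\Omega) : \gamma w = 0\}$, and $P = A D\cdot D + B\cdot D + q : H^{2m}(\Omega) \to L^2(\Omega)$ is compact because it factors through the compact embedding $H^{2m-2}(\Omega) \hookrightarrow L^2(\Omega)$ followed by multiplication by $L^\infty$ coefficients (this is precisely where $m \geq 2$ is used). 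The kernel of $I + TP$ on $V$ is exactly the Navier null space of $\LL_{A,B,q}$, which is trivial by hypothesis, so the Fredholm alternative yields bounded invertibility and the $H^{2m}$ estimate. Step 3 is then the trace theorem applied componentwise, using $(-\Delta)^j u \in H^{2m-2j}(\Omega)$ with $2m-2j \geq 2 > \tfrac32$ so that $\partial_\nu (-\Delta)^j u \in H^{2m-2j-\frac32}(\Gamma)$. All three steps are rigorous; the only minor point worth making explicit is that $T$ is indeed well defined as a bijection $L^2(\Omega) \to V$, which follows from the same decoupling argument with $f = 0$ (uniqueness is immediate since $w_{m-1} = 0 \Rightarrow w_{m-2} = 0 \Rightarrow \cdots \Rightarrow u = 0$).
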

As a corollary of above Lemma, the Dirichlet-to-Neumann map $\Lambda_{A,B,q}$ given by \eqref{1.4}
is bounded from $\mathcal{H}^{m,\frac{1}{2}}(\Gamma)$ to
$\mathcal{H}^{m,\frac{3}{2}}(\Gamma)$. We denote by $\|\Lambda_{A,B,q}\|$ its norm.

\medskip
In this paper, we are interested in recovering the coefficients $A$, $B$ and $q$ from the boundary measurements unclosed in the Dirichlet to Neumann map
\begin{equation}\label{1.5}
\Lambda_{A,B,q}: \mathcal{H}^{m,\frac{1}{2}}(\Gamma)\To \mathcal{H}^{m,\frac{3}{2}}(\Gamma).
\end{equation}

In practice, the polyharmonic operators $(-\Delta)^m$, $m \geq2$, arise in many areas of physics and geometry, including the study of vibration of beams, the Kirchhoff-Love plate equation in the theory elasticity (for $m=2$), the continuum mechanics of buckling problems and the study of the Paneitz-Branson operator in conformal geometry, for more details we refer to \cite{GGS} and \cite{MVV}.\\
The uniqueness question of determination the lower order perturbations of polyharmonic operator $(-\Delta)^m$, $m \geq2$, was begun by Krupchyk, Lassas and Uhlmann who showed in \cite{K1} the uniqueness of the zeroth and the first order perturbations from the knowledge of D-to-N map. Later, they proved in \cite{K2} that the unique recovery of a first order perturbation of the biharmonic operator, i.e., $m=2$, is possible when the D-to-N map given only on a part of the boundary $\Gamma$, where $\Omega$ is a bounded domain.  In case where $\Omega$ is an unbounded domain, we refer the work of Yang \cite{Y}. In \cite{Gk}, Ghosh and Krishnan considered the higher order perturbation of polyharmonic operators with partial data on the boundary, where the coefficients attached to perturbed terms are isotropic. Later, Bhattacharyya and Ghosh established in \cite{B2} the uniqueness of a second order perturbations of a polyharmonic operator $(-\Delta)^m$, $m \geq 2$, from the D-to-N data. More precisely, they showed that, for $m>2$, the unique determination of a $2-$tensor field $A$, a vector field $B$ and a potential $q$ provided $A \in W^{3, \infty}(\Omega)$, $B \in W^{2, \infty}(\Omega)$ and $q \in L^\infty(\Omega)$ with some restrictions on the support of the coefficients and for $m=2$, they assumed that $A$ is an isotropic matrix. In the case of an anisotropic matrix, for $m=2$, they showed in \cite{B1} that the unique determination of second order perturbations of a biharmonic operator is possible from the boundary D-to-N map. Note that in the case of a lower order perturbation up to order $3$ of the biharmonic operator, there is an obstruction to uniqueness, as noted in \cite{B1}. Recently, Bhattacharyya, Krishnan and Sahoo established  in \cite{BKS} the unique recovery of a lower order anisotropic tensor perturbations up to order $m$ of a polyharmonic operator from measurements taken on a part of the boundary. All the above mentioned works are stated in the case where all the coefficients of the perturbed polyharmonic operator are sufficiently smooth. In case of lower regularity of the coefficients, we refer to \cite{A1}, \cite{AI}, \cite{KU} and to recent work \cite{BG}.
\smallskip

For the stability question, Choudhury and Krishnan proved in \cite{C1} a logarithmic type stability estimate for the zeroth order perturbation of the biharmonic operator, $(-\Delta)^2$, for the case when the Neumann data is measured on the whole boundary and a log-log type stability estimate for the case when the Neumann data is measured only on slightly more than half of the boundary. Later, Choudhury and Heck established in \cite{CH} a logarithmic type stability estimate for the zeroth order perturbation of the biharmonic operator with partial data when the inaccessible part of the domain is flat. A natural question to ask is whether higher order perturbations of the polyharmonic operator can be stably recovered from the knowledge of D-to-N map. In this work, we show that if we consider a second order perturbation of the polyharmonic operator  $(-\Delta)^m$, $m \geq 2$, of the form \eqref{1.1} then the coefficients $(A, B, q)$ depend stably on the bounded map $\Lambda_{A,B,q}$.

\subsection{Main stability results} We here state the main results of this paper concerning conditional logarithmic stability reconstruction of the second order perturbation coefficients $(A, B, q)$ from knowledge of the Dirichlet-to-Neumann map $\Lambda_{A,B,q}$ given by $\eqref{1.5}$.
\smallskip

Let us first indicate the required conditions for admissible coefficients $(A, B, q)$. In what follows, we assume that the second order perturbation $A$ takes the form
\begin{align}\label{cond_A}
A(x)=a(x) \operatorname{id},\quad \textrm{if} ~~m=2.
\end{align}
Let $M >0$ and $\sigma_1 > \frac{n}{2}+3$ be given. We define the class of admissible symmetric tensor $(A_{jk})$, $ \mathcal{A}_{\sigma_1}(M)$, by
\begin{align*}
    \mathcal{A}_{\sigma_1}(M)&=\{A \in W^{\operatorname{min}(2m, 6), \infty}(\Omega,\C^{n^2});~~ \|A\|_{H^{\sigma_1}(\Omega)} \leq M,~\textrm{and satisfying \eqref{cond_A}} \},\\
\end{align*}
Given $M >0$ and $\sigma_2 >\frac{n}{2}+1$, we define the class of admissible vector field $B$ and electric potential $q$ respectively by
\begin{align*}
    \mathcal{B}_{\sigma_2}(M)&=\{B \in W^{4,\infty}(\Omega,\C^n),~~ \|B\|_{H^{\sigma_2}(\Omega)} \leq M \},
\end{align*}
and 
\begin{align*}
   \mathcal{Q}(M)&=\{q \in L^{\infty}(\Omega),~~ \|q\|_{L^{\infty}(\Omega)} \leq M \}.
\end{align*}
Let us define, where $\mu >0$, the function $\Phi_\mu$ as follows 
$$
    \Phi_\mu(t)=\left\{
    \begin{array}{lll}
0, & t=0, \\
\abs{\ln t}^{-\mu }+t^{\frac{1}{2}}, & t >0,
    \end{array}
    \right.
$$
and we denote by $\mathcal{E}^\prime(\Omega)$ the space of all compactly supported distributions in $\Omega$.
\medskip

Our  main results state that it is possible to stably determine the symmetric tensor $A$, the vector field $B$ and the potential $q$, given the knowledge of the Dirichlet-to-Neumann map. Precisely, we will prove the following three theorems in sections \ref{section4} and \ref{section5}.

\begin{theorem}
Let $M >0$ and $\sigma_j >0$, $j=1,2,$ as above. Then, there exists a constant $C >0$ and $\mu_1, \kappa_1 \in (0,1)$, such that for any $A^{(j)}\in \mathcal{A}_{\sigma_1}(M) \cap \mathcal{E}^\prime(\Omega)$, $B^{(j)}\in\mathcal{B}_{\sigma_2}(M)\cap \mathcal{E}^\prime(\Omega)$ and $q^{(j)}\in \mathcal{Q}_{}(M)$, $j=1,2$, we have 
\begin{align}\label{1.8}
  \|A^{(1)}-A^{(2)}\|_{L^{\infty}(\Omega)} \leq C \Phi_{\mu_1}( \|\Lambda_{A^{(1)},B^{(1)},q^{(1)}}-\Lambda_{A^{(2)},B^{(2)},q^{(2)}}\|)^{\kappa_1}.
\end{align}
Here $C$ depends only on $\Omega$, $\sigma_1$, $\sigma_2$, $n$, $m$ and $M$, and $\mu_1$ depends only on $n$ and $m$.
\label{T1.2}
\end{theorem}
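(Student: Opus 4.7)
The plan is to combine CGO (complex geometric optics) solutions of the perturbed polyharmonic equation with an Alessandrini-type integral identity and a Fourier high/low frequency cut-off, so as to lift the uniqueness result of Bhattacharyya--Ghosh \cite{B2,B1} to a quantitative logarithmic bound. The precise shape of $\Phi_{\mu_1}$ reflects the usual trade-off in CGO arguments between the exponential growth $e^{C\tau}$ of the CGO solutions and the polynomial rate $\tau^{-1}$ at which low Fourier modes of $A^{(1)}-A^{(2)}$ can be extracted. For each $\xi\in\R^n$ and each large $\tau>0$ I would first construct CGO solutions $u_j\in H^{2m}(\Omega)$ to $\LL_{A^{(j)},B^{(j)},q^{(j)}}u_j=0$ of the form
\begin{equation*}
u_j(x)=e^{x\cdot\zeta_j}\bigl(a_j(x)+r_j(x,\tau)\bigr),\qquad \zeta_j\cdot\zeta_j=0,\ |\zeta_j|\sim\tau,\ \zeta_1+\zeta_2=-i\xi.
\end{equation*}
The principal amplitudes $a_j$ arise from a finite cascade of transport equations for the conjugated polyharmonic operator $e^{-x\cdot\zeta_j}(-\Delta)^m e^{x\cdot\zeta_j}$, and the remainders obey Carleman-type gains $\|r_j\|_{H^k(\Omega)}\les\tau^{k-1}$ for $0\le k\le 2m$; the regularity thresholds in $\mathcal{A}_{\sigma_1}(M)$ and $\mathcal{B}_{\sigma_2}(M)$ are exactly what is needed to close this iteration (cf.\ \cite{K1,K2,B1,B2}).

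Next, pairing the CGO's in the Alessandrini-type identity
\begin{equation*}
\int_\Omega\!\Bigl[(A^{(1)}\!-\!A^{(2)})Du_1\!\cdot\! Du_2+(B^{(1)}\!-\!B^{(2)})\!\cdot\! Du_1\,u_2+(q^{(1)}\!-\!q^{(2)})u_1u_2\Bigr]dx=\mathcal{R}_{\partial\Omega},
\end{equation*}
with $|\mathcal{R}_{\partial\Omega}|\les e^{C\tau}\,\data$ by Lemma \ref{L0.1} and the CGO bounds, and observing that each derivative of $e^{x\cdot\zeta_j}$ brings down a factor $\sim\tau$ so that the $A$-term is $O(\tau^2)$ while the $B$- and $q$-terms are only $O(\tau)$ and $O(1)$, one divides by $\tau^2$. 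Using \eqref{cond_A} at $m=2$ to reduce to a scalar (respectively, for $m>2$, varying the transverse direction in $\zeta_j$ through $n$ independent positions so as to invert the algebraic map $A\mapsto \sum_{jk}A_{jk}(\zeta_1)_j(\zeta_2)_k$ on symmetric $2$-tensors), this yields the Fourier estimate
\begin{equation*}
\bigl|\widehat{(A^{(1)}-A^{(2)})}(\xi)\bigr|\les \tau^{-1}+\tau^{-2}e^{C\tau}\,\data,\qquad |\xi|\le c\tau.
\end{equation*}

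A Fourier cut-off then gives, with $\widetilde A:=A^{(1)}-A^{(2)}$ and $R\in(0,c\tau]$,
\begin{equation*}
\|\widetilde A\|_{L^\infty(\Omega)}\les \int_{|\xi|\le R}\!\bigl|\widehat{\widetilde A}(\xi)\bigr|d\xi+R^{\,n/2-\sigma_1}\,\|\widetilde A\|_{H^{\sigma_1}(\Omega)}.
\end{equation*}
Choosing $R=\tau^{\beta}$ with $\beta=1/(\sigma_1+n/2)$ to balance the two contributions and then $\tau=\varepsilon\log(1/\data)$ with $\varepsilon$ small enough that $e^{C\tau}\data\le \data^{1/2}$, produces
\begin{equation*}
\|\widetilde A\|_{L^\infty(\Omega)}\les |\log\data|^{-\mu_1}+\data^{1/2}=\Phi_{\mu_1}(\data),\qquad \mu_1=\frac{\sigma_1-n/2}{\sigma_1+n/2}.
\end{equation*}
A final Sobolev interpolation between $L^\infty$ and $H^{\sigma_1}$, using the a priori bound $\|A^{(j)}\|_{H^{\sigma_1}(\Omega)}\le M$, produces the exponent $\kappa_1\in(0,1)$ in \eqref{1.8}.

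The main obstacle, and the reason for imposing \eqref{cond_A}, is the gauge/anisotropy obstruction at $m=2$ identified in \cite{B1}: the quadratic form $A\mapsto\sum_{jk}A_{jk}(\zeta_1)_j(\zeta_2)_k$ that appears in the CGO pairing has a non-trivial kernel on symmetric $2$-tensors, so without the isotropy assumption one cannot recover the full matrix from a single family of CGO's. A secondary technical issue is the careful derivative bookkeeping in the remainder bounds $\|r_j\|_{H^k}\les\tau^{k-1}$ up to $k=2m$; this is what dictates the regularity thresholds $W^{\min(2m,6),\infty}$ on $A^{(j)}$, $W^{4,\infty}$ on $B^{(j)}$ and $\sigma_1>n/2+3$, and ultimately fixes the exact value of $\mu_1$.
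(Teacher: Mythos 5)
Your plan has a genuine gap at its core: the claim that for $m>2$ one can ``invert the algebraic map $A\mapsto\sum_{jk}A_{jk}(\zeta_1)_j(\zeta_2)_k$ on symmetric $2$-tensors'' by varying the transverse direction of $\zeta_j$. Because the CGO phases must satisfy $\zeta_j\cdot\zeta_j=0$ (and, in the paper's normalization, the phases cancel so the Fourier mode is carried by the amplitude $\ama_1=e^{-ix\cdot\xi}$), what the leading-order pairing actually produces is $\widehat{A}(\xi)\,\varrho\cdot\varrho$ for $\varrho=\omega+i\tilde\omega$ with $\omega,\tilde\omega\in\s^{n-1}$ orthogonal to $\xi$ and to each other. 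Ranging over all such $\varrho$ yields only the off-diagonal entries and the differences of diagonal entries of $\widehat{A}(\xi)$ restricted to $\xi^\perp$: the isotropic piece $\vartheta\,\mathrm{id}$ (for which $\vartheta\,\varrho\cdot\varrho\equiv 0$) and, after integration by parts, the gauge piece $\nabla_{\text{sym}}V$ with $V|_\Gamma=0$ lie in the kernel of this map for \emph{every} $m$, not just for $m=2$. Consequently your Fourier bound $|\widehat{A^{(1)}-A^{(2)}}(\xi)|\les\tau^{-1}+\tau^{-2}e^{C\tau}\data$ cannot hold for the full tensor, and the cut-off and balancing steps built on it do not close.

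The paper's remedy is structural: the Hodge-type decomposition $A=A^\prime+\nabla_{\text{sym}}V+\vartheta_A\,\mathrm{id}$ of Lemma \ref{L4.1}, with $A^\prime$ trace-free and divergence-free and $V|_\Gamma=0$. The $O(h^{-1})$ information determines $\widehat{A^\prime}(\xi)$ (Lemma \ref{L4.5}, using precisely the trace-free and divergence-free constraints to fill the algebraic gaps); $V$ is then read off from the $O(1)$ information once $A^\prime$ is controlled (Lemmas \ref{L4.7}, \ref{L4.11}); and $\vartheta_A$ is \emph{coupled} to $B$ through the second-order identity of Lemma \ref{L4.12}, so one must first establish Theorem \ref{T1.3} (bounding $\operatorname{curl}B$ and $\vartheta_B$ in Lemmas \ref{L4.14}, \ref{L4.15}) before closing the $\vartheta_A$ estimate (Lemma \ref{L4.17}). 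The cumulative $h^{-1}$ losses down this chain are absorbed by replacing $e^{C/h}$ with $e^{C/h^\ell}$ via \eqref{1*} of Lemma \ref{LA}; this is also why the exponent $\mu_1$ in Theorem \ref{T1.2} depends only on $n$ and $m$, and not on $\sigma_1$ as your proposed $\mu_1=(\sigma_1-n/2)/(\sigma_1+n/2)$ would suggest. Finally, the step you describe as ``interpolation between $L^\infty$ and $H^{\sigma_1}$'' is not well-posed; in the paper the passage from the Fourier bound to $L^\infty$ uses the $H^{-1}\!\to H^{n/2+\delta}$ interpolation inside Lemma \ref{LA}, not an improvement of an estimate already in $L^\infty$.
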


\begin{theorem}
Let $M >0$ and $\sigma_j >0$, $j=1,2,$ as above. Then, there exists a constant $C >0$ and $\mu_2, \kappa_2 \in (0,1)$, such that for any $A^{(j)}\in \mathcal{A}_{\sigma_1}(M)\cap \mathcal{E}^\prime(\Omega)$, $B^{(j)}\in\mathcal{B}_{\sigma_2}(M)\cap \mathcal{E}^\prime(\Omega)$ and $q^{(j)}\in \mathcal{Q}_{}(M)$, $j=1,2$, we have 
\begin{align}\label{1.9}
\|B^{(1)}-B^{(2)}\|_{L^{\infty}(\Omega)} \leq C  \Phi_{\mu_2}( \|\Lambda_{A^{(1)},B^{(1)},q^{(1)}}-\Lambda_{A^{(2)},B^{(2)},q^{(2)}}\|)^{\kappa_2}.
\end{align}
Here $C$ depends only on $\Omega$, $\sigma_1$, $\sigma_2$, $n$, $m$ and $M$, and $\mu_2$ depends only on $n$ and $m$.
\label{T1.3}
\end{theorem}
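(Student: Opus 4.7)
The plan is to follow the same complex geometric optics (CGO) strategy that underlies the proof of Theorem \ref{T1.2}, but now to exploit the already-established stability of $A$ to isolate the $B$-contribution inside the corresponding integral identity.

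First, I would write down an Alessandrini-type identity
$$\int_{\Omega} (A^{(1)}-A^{(2)}) Du_1 \cdot Du_2 \,dx + \int_{\Omega} (B^{(1)}-B^{(2)}) \cdot Du_1 \, u_2 \,dx + \int_{\Omega} (q^{(1)}-q^{(2)}) u_1 u_2 \,dx = \langle (\Lambda_{A^{(1)},B^{(1)},q^{(1)}}-\Lambda_{A^{(2)},B^{(2)},q^{(2)}})\gamma u_1, \widetilde{\gamma} u_2\rangle,$$
valid whenever $\LL_{A^{(1)},B^{(1)},q^{(1)}} u_1 = 0$ in $\Omega$ and $u_2$ solves the formal adjoint equation $\LL_{A^{(2)},B^{(2)},q^{(2)}}^{*} u_2 = 0$. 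Into this identity I would plug CGO solutions of the form $u_j = e^{x\cdot \zeta_j/h}(a_j + r_j)$ with $\zeta_j\cdot \zeta_j = 0$, $|\zeta_j|\sim 1$ and $\zeta_1+\zeta_2 = -ih\xi$ for fixed $\xi\in\R^n$ and small $h>0$. The amplitudes $a_j$ are built from a hierarchy of transport equations tailored to the perturbed polyharmonic operator, and the remainder $r_j$ obeys a bound $\|r_j\|_{L^2(\Omega)}\lesssim h^{\delta}$ for some $\delta>0$, with a growth estimate of the form $\|u_j\|_{H^{2m}(\Omega)}\les h^{-N}e^{C/h}$.

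The next and most delicate step is the algebraic separation of $A$ and $B$. The term $\int (A^{(1)}-A^{(2)}) Du_1\cdot Du_2$ produces, at its top power $h^{-2}$, the Fourier transform of $A^{(1)}-A^{(2)}$ contracted with $\zeta_1\otimes \zeta_2$ at frequency $\xi$. Because Theorem \ref{T1.2} already controls $A^{(1)}-A^{(2)}$ in $L^\infty(\Omega)$ by $\Phi_{\mu_1}(\|\Lambda^{(1)}-\Lambda^{(2)}\|)^{\kappa_1}$, after multiplying the identity by $h^2$ this whole block can be absorbed into a remainder of size $\Phi_{\mu_1}(\cdot)^{\kappa_1}$. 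The surviving leading term at order $h^{-1}$ is then a linear functional of $\widehat{B^{(1)}-B^{(2)}}(\xi)$, contracted with the leading symbol of $D$ applied to $e^{x\cdot \zeta_1/h}$, i.e.\ with $\zeta_1$. Choosing the transverse components of $\zeta_1, \zeta_2$ suitably and varying them in an orthonormal basis of $\xi^\perp$, as in the CGO arguments of Krupchyk--Lassas--Uhlmann, one reconstructs every component of $\widehat{B^{(1)}-B^{(2)}}(\xi)$ together with an estimate of the shape
$$\bigl|\widehat{B^{(1)}-B^{(2)}}(\xi)\bigr| \les e^{C/h}\,\|\Lambda_{A^{(1)},B^{(1)},q^{(1)}}-\Lambda_{A^{(2)},B^{(2)},q^{(2)}}\| + h^{\delta} + \Phi_{\mu_1}\bigl(\|\Lambda^{(1)}-\Lambda^{(2)}\|\bigr)^{\kappa_1},$$
uniformly in $|\xi|\leq \rho/h$ or similar.

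Finally, a standard split-frequency argument converts this into an $L^\infty$ bound on $B^{(1)}-B^{(2)}$: low frequencies $|\xi|\leq \rho$ are handled by the CGO estimate above, while high frequencies $|\xi|>\rho$ decay at the rate $\rho^{\sigma_2-\frac{n}{2}-1}$ thanks to the Sobolev bound $\|B^{(j)}\|_{H^{\sigma_2}}\leq M$ with $\sigma_2>\frac{n}{2}+1$ and Sobolev embedding $H^{\sigma_2}\hookrightarrow L^\infty$. Optimizing first in $h$, then choosing $\rho$ of the order $(\log \|\Lambda^{(1)}-\Lambda^{(2)}\|^{-1})^{\alpha}$ for a suitable $\alpha>0$, one obtains the stated logarithmic modulus $\Phi_{\mu_2}$ with a new exponent $\kappa_2\in(0,1)$.

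I expect the principal obstacle to be the clean separation of the $A$- and $B$-contributions in the CGO identity: because $A$ sits at order $h^{-2}$ and $B$ at order $h^{-1}$, the amplitudes $a_j$ must be constructed with enough accuracy (hence the requirement $A\in W^{\min(2m,6),\infty}$ and $B\in W^{4,\infty}$) so that the $\zeta_1\otimes \zeta_2$ cross-terms produced by $A$ on the sub-leading amplitudes do not contaminate the coefficient of $\widehat{B}$ in an uncontrollable way. The compact-support hypotheses $A^{(j)},B^{(j)}\in \mathcal{E}'(\Omega)$ are essential here: they legitimize extending the differences by zero to $\R^n$ and performing a genuine Fourier analysis of the integral identity on all of $\R^n$.
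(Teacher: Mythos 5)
Your plan has two genuine gaps.

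First, there is a circularity: you propose to absorb the $A$-contribution using the \emph{conclusion} of Theorem~\ref{T1.2}, ``since Theorem~\ref{T1.2} already controls $A^{(1)}-A^{(2)}$ in $L^\infty$.'' But the paper proves Theorem~\ref{T1.2} \emph{after} Theorem~\ref{T1.3}: the full estimate on $A$ is reassembled from the Hodge decomposition $A=A'+\nabla_{\text{sym}}V+\vartheta_A\,\operatorname{id}$, and the piece $\vartheta_A$ is estimated in Lemma~\ref{L4.17} only by inserting the already-derived $L^\infty$ bound on $B$ (see the use of \eqref{norm_B} in that proof). The paper's proof of Theorem~\ref{T1.3} carefully uses only the \emph{partial} bounds on $A'$ and $V$ (Lemmas~\ref{L4.5} and~\ref{L4.11}), which are available without any knowledge of $B$ because those quantities are isolated by contracting with $\varrho\otimes\varrho$ and by the boundary condition $V|_\Gamma=0$. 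Related to this, the $A$-block sits at order $h^{-1}$ in the integral identity \eqref{3.15}: a fixed $h$-independent bound $\|A\|_{L^\infty}\les\Phi_{\mu_1}(\cdot)^{\kappa_1}$ does not let you control $h^{-1}\|A\|_{L^\infty}$. The paper needs the $h$-dependent version $\|\cdot\|_{L^\infty}\les e^{C/h}\Lambda^\mu+h^\kappa$ together with the ``gain an inverse power of $h$ at the cost of a worse $\ell$'' device of Lemma~\ref{LA}\,\eqref{1*}; your write-up skips this.

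Second, with CGO phases constrained to $\varrho\cdot\xi=0$ you can only recover $\varrho\cdot\widehat{B}(\xi)$ for $\varrho$ in the complexified $\xi^\perp$. Varying $\varrho$ over $\xi^\perp$ as you suggest determines the \emph{transverse} projection of $\widehat{B}(\xi)$ — equivalently the entries $\xi_j\widehat{B}_k-\xi_k\widehat{B}_j$ of $\widehat{\operatorname{curl}B}(\xi)$ — but \emph{not} the radial component $\xi\cdot\widehat{B}(\xi)$. The claim that ``one reconstructs every component of $\widehat{B^{(1)}-B^{(2)}}(\xi)$'' is therefore false as stated. The paper closes this gap via the Hodge decomposition $B=B'+\nabla\vartheta_B$ with $\operatorname{div}B'=0$, $\vartheta_B|_\Gamma=0$: the curl data controls $B'$ (Lemmas~\ref{L4.9} and~\ref{L4.14}), while $\vartheta_B$ requires a separate integral estimate with a different amplitude choice $\ama_1^*=-\omega\cdot x$ and an integration by parts exploiting $\vartheta_B|_\Gamma=0$ (Lemma~\ref{L4.15}). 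Without that second step your argument cannot bound $\|B\|_{L^\infty}$.
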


 \begin{theorem}
Let $M >0$ and $\sigma_j >0$, $j=1,2,$ as above. Then, there exists a constant $C >0$ and $\mu_3 \in (0,1)$, such that for any $A^{(j)}\in \mathcal{A}_{\sigma_1}(M)\cap \mathcal{E}^\prime(\Omega)$, $B^{(j)}\in\mathcal{B}_{\sigma_2}(M)\cap \mathcal{E}^\prime(\Omega)$ and $q^{(j)}\in \mathcal{Q}_{}(M)$, $j=1,2$, we have 
\begin{align}\label{1.10}
  \|q^{(1)}-q^{(2)}\|_{L^{\infty}(\Omega)} \leq C  \Phi_{\mu_3}( \|\Lambda_{A^{(1)},B^{(1)},q^{(1)}}-\Lambda_{A^{(2)},B^{(2)},q^{(2)}}\|).
\end{align}
Here $C$ depends only on $\Omega$, $\sigma_1$, $\sigma_2$, $n$, $m$ and $M$, and $\mu_3 \in (0,1)$ depends only on $n$ and $m$.
\label{T1.4}
\end{theorem}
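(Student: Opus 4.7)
The proof follows the classical scheme: Alessandrini-type integral identity combined with complex geometric optics (CGO) solutions, in the spirit of Krupchyk--Lassas--Uhlmann, quantified to yield logarithmic stability. Denoting $\Lambda^{(j)}:=\Lambda_{A^{(j)},B^{(j)},q^{(j)}}$ and $\data:=\|\Lambda^{(1)}-\Lambda^{(2)}\|$, and having already proved Theorems \ref{T1.2} and \ref{T1.3}, the strategy is to extract the Fourier transform of $q^{(1)}-q^{(2)}$ from the identity, absorb the contributions of $A^{(1)}-A^{(2)}$ and $B^{(1)}-B^{(2)}$ via those theorems, and balance the CGO parameter against $\data$.

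First, by repeated integration by parts in the $(-\Delta)^m$ term combined with Lemma \ref{L0.1}, for any $u_1 \in H^{2m}(\Omega)$ solving $\LL_{A^{(1)},B^{(1)},q^{(1)}} u_1 = 0$ and any $u_2 \in H^{2m}(\Omega)$ solving the formal adjoint equation for the second set of coefficients, one obtains an identity of the form
\begin{equation*}
\int_\Omega \bigl[(A^{(1)}-A^{(2)}) D u_1 \cdot D u_2 + (B^{(1)}-B^{(2)}) \cdot D u_1\, u_2 + (q^{(1)}-q^{(2)}) u_1 u_2 \bigr]\,dx = \langle (\Lambda^{(1)} - \Lambda^{(2)}) \gamma u_1, \widetilde{\gamma} u_2 \rangle_\Gamma,
\end{equation*}
whose right-hand side is dominated by $\data\,\|u_1\|_{H^{2m}(\Omega)} \|u_2\|_{H^{2m}(\Omega)}$.

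Next, for each $\xi \in \R^n$ I would use CGO solutions $u_j = e^{\zeta_j\cdot x}(1+r_j)$ with $\zeta_j \in \C^n$ satisfying $\zeta_j\cdot\zeta_j=0$, $\zeta_1+\zeta_2=-i\xi$ and $|\mathrm{Re}\,\zeta_j|\sim\tau$ for a large parameter $\tau$. Inverting the conjugated operator $e^{-\zeta_j\cdot x}(-\Delta)^m e^{\zeta_j\cdot x}$ via a Carleman-type estimate for $(-\Delta)^m$ yields correctors $r_j$ with $\|r_j\|_{H^s(\Omega)} \les \tau^{s-m}$ for $0\leq s\leq 2m$, which is sufficient because at most two derivatives act on the CGO amplitudes in the identity above. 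Plugging in and expanding, the leading $q$-contribution reproduces $\widehat{(q^{(1)}-q^{(2)})}(\xi)$ modulo $O(\tau^{-1})$, the $A$- and $B$-contributions are respectively majorized by $\tau^2\|A^{(1)}-A^{(2)}\|_{L^\infty(\Omega)}$ and $\tau\|B^{(1)}-B^{(2)}\|_{L^\infty(\Omega)}$, and the right-hand side by $e^{C\tau}\data$. Invoking Theorems \ref{T1.2} and \ref{T1.3} to control the $A$ and $B$ terms then gives
\begin{equation*}
|\widehat{(q^{(1)}-q^{(2)})}(\xi)| \les e^{C\tau}\data + \tau^2 \Phi_{\mu_1}(\data)^{\kappa_1} + \tau\,\Phi_{\mu_2}(\data)^{\kappa_2} + \tau^{-1}.
\end{equation*}

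Choosing $\tau \sim \delta|\ln\data|$ with $\delta$ small enough that $e^{C\tau}\data$ is polynomially small, and provided the exponents $\mu_1\kappa_1,\mu_2\kappa_2$ coming from Theorems \ref{T1.2} and \ref{T1.3} dominate the polynomial prefactors $\tau^2$ and $\tau$ respectively, one obtains a uniform logarithmic bound on $\widehat{(q^{(1)}-q^{(2)})}(\xi)$ over $\{|\xi|\leq R_0\}$ for some $R_0$. Finally, I would pass from this pointwise Fourier bound to the $L^\infty$-estimate on $q^{(1)}-q^{(2)}$ by a low/high frequency splitting, combined with interpolation between the ambient bound $\|q^{(j)}\|_{L^\infty(\Omega)}\leq M$ and a negative Sobolev estimate to tame the high-frequency tail. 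The principal obstacles I expect are (i) producing CGO remainder estimates sharp enough in higher Sobolev norms to absorb the second-order symbol $A D^2$ without losing the $\tau$-budget needed later, and (ii) orchestrating the exponents $(\mu_1,\kappa_1,\mu_2,\kappa_2)$ coming out of the previous two theorems so that the resulting $\mu_3$ lands in $(0,1)$ depending only on $n$ and $m$, which is the most delicate bookkeeping step in the whole argument.
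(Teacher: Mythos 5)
Your overall strategy (integral identity plus CGO, then low/high frequency split and optimization of the CGO parameter) is the same as the paper's, but there is a genuine gap at the step where you ``invoke Theorems~\ref{T1.2} and \ref{T1.3}'' to handle the $A$ and $B$ contributions. Those theorems already have the CGO parameter baked in: each is obtained by choosing $h\sim 1/|\ln\data|$ (equivalently $\tau\sim|\ln\data|$) and optimizing, and the result is a fixed quantity $\Phi_{\mu_j}(\data)^{\kappa_j}\sim|\ln\data|^{-\mu_j\kappa_j}$ with $\mu_j\kappa_j<1$. When you then multiply by the polynomial prefactors $\tau^2\sim|\ln\data|^2$ and $\tau\sim|\ln\data|$ that come from the $A\,D\cdot D$ and $B\cdot D$ terms, you get $|\ln\data|^{2-\mu_1\kappa_1}$ and $|\ln\data|^{1-\mu_2\kappa_2}$, both of which diverge as $\data\to 0$; the proviso in your last paragraph (that $\mu_1\kappa_1,\mu_2\kappa_2$ should dominate the prefactors) is exactly the thing that cannot be arranged, since these exponents are constrained to lie in $(0,1)$. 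The paper avoids this by \emph{not} invoking the final theorems: it uses the $h$-parametrized intermediate bounds \eqref{norm_B}, \eqref{normA} of the form $e^{C/h^\ell}\data^\theta+h^\kappa$, and then pays for the prefactors $h^{-2},h^{-1}$ via the second part of Lemma~\ref{LA} (estimate \eqref{1*}), which trades an increase in the power $\ell$ inside the exponential for the negative power of $h$. Only \emph{after} this absorption does it choose $h$ in terms of $\data$. To repair your argument you must keep $\tau$ (or $h$) a free parameter in the $A$ and $B$ estimates, carry out the absorption of $\tau^2,\tau$ at the Fourier level, and only then optimize.

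A secondary concern: the final upgrade from a low-frequency Fourier bound to $\|q^{(1)}-q^{(2)}\|_{L^\infty(\Omega)}$ by ``interpolation with $\|q^{(j)}\|_{L^\infty}\le M$ and a negative Sobolev estimate'' does not go through, because $q$ is only in $\mathcal{Q}(M)\subset L^\infty(\Omega)$ with no positive Sobolev regularity assumed; interpolating $H^{-1}$ with $L^2$ (or $L^\infty$ on a bounded domain, which embeds in $L^2$) cannot produce $L^\infty$. The paper itself stops at the $H^{-1}$ bound for $q$ in the proof of Theorem~\ref{T1.4}, so this part of your plan is overambitious even relative to what the paper delivers.
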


Let us explain the main difficulties and ideas in the proof of
Theorems \ref{T1.2}, \ref{T1.3} and \ref{T1.4}. We start by
realling the qualitative argument due to Bhattacharyya and Ghosh
\cite{B2}. Their starting point
is the following orthogonal identity (see Lemma \ref{L3.1} below)
\begin{equation}
\int_\Omega (A(x)D \cdot D u+B(x)\cdot Du+q(x)u)\overline{u^*}dx=0,\label{*}
\end{equation}
which holds for $u$ and $u^*$ solving $\LL_{A^{(2)}, B^{(2)}, q^{(2)}}u=0$ and $\LL^*_{A^{(1)}, B^{(1)}, q^{(1)}}u^*=0$  respectively, wherever $\Lambda_{A^{(1)}, B^{(1)},
q^{(1)}}=\Lambda_{A^{(2)}, B^{(2)}, q^{(2)}}$. They, then proceed by constructing of special solutions, called complex geometric optics
solutions (C.G.O- solutions), that are to be used with the integral identity \eqref{*}. We mention that the method of C.G.O- solutions that is used
for proving uniqueness question for higher order elliptic operators
goes back to Sylvester and Uhlmann \cite{SU}. The complex geometric optics solutions to the polyharmonic equation take the special form 
\begin{align}\label{**}
    u(x,h)=e^{\frac{x \cdot \varrho}{h}}(\ama_1(x)+h \ama_2(x)+r(x,h)),
\end{align}
where $\varrho \in \mathbb{C}^n$, $h$ is a small parameter, $\ama_j$ $j=1,2,$ are complex amplitudes satisfying some transport equations and $r(\cdot, h)$ is remainder term that vanishes when $h$ goes to zero.\medskip\\
Now, any symmetric tensor $A \in H^k(\Omega)$ can be uniquely represented as the sum 
\begin{align}\label{***}
 A=\widetilde{A} +\nabla_{\text{sym}}V ,    
\end{align}
where the covector field $V \in  H^{k+1}(\Omega, \mathbb{C}^{n})$ satisfying the boundary condition
\begin{align}\label{***V}
 V=0,\quad \textrm{on}~~\Gamma,   
\end{align}
and the tensor field $\widetilde{A} \in  H^k(\Omega, \mathbb{C}^{n^2})$ satisfies the conditions
\begin{align}\label{***F}
  \sum_{j=1}^n \partial_j \widetilde{A}_{jk}=0,\quad \forall k \in \{1, \ldots, n \}. 
\end{align}

With the C.G.O-solutions given by \eqref{**} and the integral identity \eqref{*}, they deduce at first, in the case when $m>2$, that $A:=A^{(1)}-A^{(2)}$ is an isotropic matrix, which they fall into the regime of case when $m=2$. Second, by changing the choice of the amplitudes $\ama_1$ and $\ama_1^*$, they prove that $B^{(1)}=B^{(2)}$. Hence, the conclusion that $A^{(1)}=A^{(2)}$ and $q^{(1)}=q^{(2)}$ follows easily.\medskip \\
However, in order to obtain a stability estimate for this inverse boundary value problem, the integral identity \eqref{*} transformed to the following integral inequality 
\begin{align*}
    \abs{\int_\Omega (A D \cdot D u+B \cdot Du+q u)\overline{u^*}dx} \leq \|\Lambda_{A^{(1)}, B^{(1)}, q^{(1)}}-\Lambda_{A^{(2)}, B^{(2)}, q^{(2)}}\| \|u\|_{H^{2m}(\Omega)} \|u^*\|_{H^{2m}(\Omega)}.
\end{align*}
The estimate for the second order perturbation is slightly more complicated. In contrast to the uniqueness result of \cite{B2}, the decomposition \eqref{***} may not be helpful to establish a stability estimate for the 2-tensor field $A$. To remedy this difficulty, we decompose $A$, in the different way, as
\begin{align}
 A=A^\prime +\nabla_{\text{sym}}V+\lambda~ id ,    
\end{align}
where the function $\lambda \in H^{k}(\Omega, \mathbb{C})$, the covector field $V \in  H^{k+1}(\Omega, \mathbb{C}^{n})$ satisfying the boundary condition \eqref{***V} and the tensor field $A^\prime \in  H^k(\Omega, \mathbb{C}^{n^2})$ satisfies the conditions \eqref{***F} and
\begin{align}\label{***A}
 \operatorname{trace}(A^{\prime})=0.
\end{align}
In fact, using the condition \eqref{***A} we are able to estimate $A^\prime:=A^{\prime (1)}-A^{\prime (2)}$. This technique will be discussed later. After this is established, the stability estimates for the vector fields $V$ and $B$ follow by using the Hodge decomposition. Then by changing the choices of amplitudes , we may estimate the function $\lambda$ and therefore, we conclude the stability estimate for $A$. Finally we will combine the estimates we get for $A$ and $B$ to obtain the stability estimate for the potential $q$. For the case where $m=2$, the stability result would follow similarly except in this case $A=\lambda ~id$.
\medskip

The remainder of this paper is organized as follows. In section $2$ we will build complex geometric optics solutions to the perturbed polyharmonic operator. In section $3$, we derive an integral identity involving the perturbations and we give a specific Hodge decomposition of a symmetric tensor $A$ and a vector field $B$. Sections $4$ and $5$, are concerned to prove the stability estimates respectively in case $m>2$ and case $m=2$. In appendix \ref{appendixA}, we develop the proof of Hodge decomposition of a symmetric tensor.
%%%%%%%%%%%%%%%%%%%%%%%%%%%
\section{Carleman estimate and C.G.O- solutions}
The main strategy of the proof of stability estimates on determining the symmetric tensor $A$, the vector field $B$ and the electric potential $q$ from the D-to-N map is the use of complex geometrical solutions to estimate the Fourier transform of the difference of two symmetric tensors $A^{(1)}-A^{(2)}$,  the difference of two vector fields $B^{(1)}-B^{(2)}$ and the difference of two potentials $q^{(1)}-q^{(2)}$. The constructed of C.G.O- solutions is based on the use of Carleman estimates. We therefore first outline some known results about Carleman estimate.

%%%%%%%%%%%%%%%%%%%%%%%%%%%%%%
\subsection{Local Carleman estimate}
In this section we will first recall the Carleman estimates for semiclassical Lapace operator $(-h^2\Delta)$ and then we use this Carleman estimate to solve an equation involving a conjugated operator of $\LL_{A,B,q}(x,D)$.
\smallskip 

We start by collecting several known Lemmas and notations. Let $\psi\in C^\infty(\R^n,\R)$, consider the conjugated operator with respect the weight function $\psi$
\begin{equation}\label{2.1}
P_\psi(x,hD)=e^{\frac{\psi}{h}}(-h^2\Delta)e^{-\frac{\psi}{h}},\quad h>0.
\end{equation}
Following \cite{KSU}, we say that $\psi$ is a limiting weight function for Laplace operator, if $\nabla \psi \neq 0$ in $\Omega$, and if it satisfies the following Poisson bracket condition
\begin{align*}
    \left\{\overline{p}_\psi, p_\psi \right\}(x, \xi)=0\quad \textrm{where}~~p_\psi(x, \xi)=0,\quad x \in \overline{\Omega}, ~\xi \in \mathbb{R}^{n},
\end{align*}
where $p_\psi$ is the semiclassical principal symbol of the opearator $P_\psi(x, hD)$.\\
Let $\omega\in\s^{n-1}$, the limiting weight function $\psi$ can be chosen as
\begin{equation}\label{2.2}
\psi(x)= x\cdot\omega,\quad x\in\R^n.
\end{equation} 
In what follows we shall equip the standard Sobolev space $H^s(\R^n)$, $s\in\R$, with the semicalssical norm
\begin{equation}\label{2.3}
\norm{u}_{\Hs^{s}(\R^n)}=\norm{\langle hD\rangle ^su}_{L^2(\R^n)},
\end{equation}
here $\langle \xi\rangle=(1+\abs{\xi}^2)^{1/2}$. Let $\varepsilon >0$, we consider the convexified Carleman weight given by 
\begin{align}\label{2.4}
    \psi_\varepsilon=\psi+\frac{h}{2\varepsilon}\psi^2.
\end{align}
Therefore, for $0< h <  \varepsilon \ll 1$, we have 
\begin{align}\label{2.5}
    \| \psi_\varepsilon\|_{W^{2, \infty}(\Omega)}= O(1).
\end{align}

We begin with the local Carleman estimate for semiclassical Laplace operator $(-h^2 \Delta)$ which is due to \cite{ST}.\medskip\\
In what follows, $\les $ holds for $\leq$ modulo a multiplicative positive constant independent of $h$.

\begin{lemma}\label{P2.1}
Let $\psi_\varepsilon$ be the convexified Carleman weight function given by (\ref{2.4}). Then for $0 <h \ll \varepsilon$ and $s \in \mathbb{R}$, we have
\begin{equation}\label{2.6}
\frac{h}{\sqrt{\varepsilon}}\norm{v}_{\Hs^{s+2}(\R^n)}\les \norm{P_{\psi_\varepsilon}(x,hD)v}_{\Hs^s(\R^n)},
\end{equation}
for any $v\in C^\infty_0(\Omega)$.
\end{lemma}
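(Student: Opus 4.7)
The plan is to follow the standard convexification argument for semiclassical Carleman estimates, treating first the case $s=0$ and then lifting to general $s$ by pseudodifferential conjugation.

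First I would compute the conjugated operator explicitly:
\begin{equation*}
P_{\psi_\varepsilon}(x,hD)v=-h^2\Delta v+2h\nabla\psi_\varepsilon\cdot\nabla v+h(\Delta\psi_\varepsilon) v-|\nabla\psi_\varepsilon|^2 v,
\end{equation*}
and decompose it into its symmetric and antisymmetric parts $P_{\psi_\varepsilon}=\mathcal{A}+i\mathcal{B}$ with $\mathcal{A}=-h^2\Delta-|\nabla\psi_\varepsilon|^2$ and $i\mathcal{B}=2h\nabla\psi_\varepsilon\cdot\nabla+h\Delta\psi_\varepsilon$. For $v\in C_0^\infty(\Omega)$, integration by parts yields
\begin{equation*}
\norm{P_{\psi_\varepsilon}v}_{L^2}^2=\norm{\mathcal{A}v}_{L^2}^2+\norm{\mathcal{B}v}_{L^2}^2+\scal{i[\mathcal{A},\mathcal{B}]v}{v}.
\end{equation*}
The decisive input is the Poisson-bracket computation for the semiclassical principal symbols $a=|\xi|^2-|\nabla\psi_\varepsilon|^2$ and $b=2\nabla\psi_\varepsilon\cdot\xi$, which gives $\{a,b\}=4\scal{\text{Hess}(\psi_\varepsilon)\xi}{\xi}+4\scal{\text{Hess}(\psi_\varepsilon)\nabla\psi_\varepsilon}{\nabla\psi_\varepsilon}$. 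Since $\psi=x\cdot\omega$ is a limiting Carleman weight its Hessian vanishes, and the convexification contributes exactly $\text{Hess}(\psi_\varepsilon)=(h/\varepsilon)\omega\omega^{T}$, producing the strict lower bound $\{a,b\}\geq Ch/\varepsilon$ on the joint characteristic set. A sharp Gårding-type inequality then promotes this to an operator lower bound $\scal{i[\mathcal{A},\mathcal{B}]v}{v}\geq (Ch^2/\varepsilon)\norm{v}_{L^2}^2$ up to errors of order $O(h^3/\varepsilon^2)$ which are absorbed under the assumption $h\ll\varepsilon$.

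Combining with the identity above and dropping the nonnegative squared norms, I obtain the $L^2$-Carleman inequality $(h/\sqrt{\varepsilon})\norm{v}_{L^2(\R^n)}\les\norm{P_{\psi_\varepsilon}v}_{L^2(\R^n)}$. To gain the two semiclassical derivatives, I would exploit that $P_{\psi_\varepsilon}$ is semiclassically elliptic of order two modulo bounded terms, which gives the standard elliptic estimate $\norm{v}_{\Hs^{2}(\R^n)}\les\norm{P_{\psi_\varepsilon}v}_{L^2(\R^n)}+\norm{v}_{L^2(\R^n)}$. Feeding in the $L^2$ bound (and using $h/\sqrt{\varepsilon}\leq 1$) yields the case $s=0$ of the lemma.

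For arbitrary $s\in\R$, the plan is to apply the already-established $s=0$ estimate to $\langle hD\rangle^{s}v$ and control the commutator $[P_{\psi_\varepsilon},\langle hD\rangle^s]$ via semiclassical pseudodifferential calculus: this commutator is $O(h)$ in the class $\Psi^{s+1}_{\mathrm{scl}}$, so the error term is bounded by $Ch\norm{v}_{\Hs^{s+1}(\R^n)}$ and, after an interpolation $\norm{v}_{\Hs^{s+1}}\leq \eta\norm{v}_{\Hs^{s+2}}+C_\eta\norm{v}_{\Hs^{s}}$ with $\eta$ small, it is absorbed into the left-hand side provided $h\ll\varepsilon$. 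I expect the main technical obstacle to lie in Step~2, namely making the Gårding step quantitative with explicit dependence on $h$ and $\varepsilon$ so that the positivity $h/\varepsilon$ of the Poisson bracket is preserved at the operator level without being overwhelmed by subprincipal corrections; the uniform $W^{2,\infty}$ control on $\psi_\varepsilon$ recorded in \eqref{2.5} is what makes these corrections tractable. The upgrade in~$s$ is essentially bookkeeping once the $s=0$ estimate is secured.
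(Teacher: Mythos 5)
The paper does not prove this lemma; it cites it directly from \cite{ST}, so there is no in-paper argument to compare against beyond the approach that reference takes. Your plan follows the standard Kenig--Sj\"ostrand--Uhlmann/Salo--Tzou template (symmetric/antisymmetric split, positivity of the commutator coming from the convexified weight, then an elliptic estimate and a pseudodifferential conjugation to shift $s$), which is precisely what the cited source does, so the broad strategy is right.

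Two remarks, one cosmetic and one a genuine gap. On the cosmetic side: for the linear weight $\psi=x\cdot\omega$ the sharp G\aa rding detour is unnecessary and in fact slightly obscures what happens. Since $\mathrm{Hess}(\psi_\varepsilon)=\tfrac{h}{\varepsilon}\,\omega\omega^{T}$ is a constant matrix, the commutator can be computed \emph{exactly}:
\begin{equation*}
i[\mathcal{A},\mathcal{B}]=\frac{4h^{2}}{\varepsilon}\Bigl((\omega\cdot hD)^{2}+\bigl(1+\tfrac{h}{\varepsilon}\psi\bigr)^{2}\Bigr),
\end{equation*}
so that $\scal{i[\mathcal{A},\mathcal{B}]v}{v}\geq\tfrac{h^{2}}{\varepsilon}\norm{v}^{2}_{L^{2}}$ for $v$ supported in $\Omega$ and $h/\varepsilon$ small, with no subprincipal error at all; your quoted error $O(h^{3}/\varepsilon^{2})$ is spurious here, and the pointwise bound $\{a,b\}\geq ch/\varepsilon$ actually holds for every $\xi$, not only on the characteristic set. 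On the substantive side: in the passage from $s=0$ to general $s$ you apply the already-established estimate to $\langle hD\rangle^{s}v$, but $\langle hD\rangle^{s}v$ is \emph{not} compactly supported in $\Omega$, and the positivity of the commutator you just used relies on $v$ being supported where $\bigl(1+\tfrac{h}{\varepsilon}\psi\bigr)^{2}$ is bounded below. This is a real step that needs an argument: one should insert a cutoff $\chi\in C^{\infty}_{0}$ equal to $1$ near $\overline{\Omega}$, apply the $L^{2}$-estimate (extended to functions supported in a fixed slightly larger set) to $\chi\langle hD\rangle^{s}v$, and verify that $(1-\chi)\langle hD\rangle^{s}v$ is $O(h^{\infty})$ in every semiclassical norm because the supports of $1-\chi$ and $v$ are separated. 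As written, your proof silently applies a local estimate to a globally supported function. The final absorption of the commutator error is otherwise fine (indeed you do not even need the interpolation step: $h\norm{v}_{\Hs^{s+1}}\leq h\norm{v}_{\Hs^{s+2}}$ is absorbed since $h\ll h/\sqrt{\varepsilon}$ once $\varepsilon<1$).
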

Iterating the Carleman estimate (\ref{2.6}) $m$-times, $m\geq2$, we get the following local Carleman estimate for the polyharmonic operator $(-h^2\Delta)^m$, that is
 \begin{equation}\label{2.7}
(\frac{h}{\sqrt{\varepsilon}})^m\norm{u}_{\Hs^{s+2m}(\R^n)}\les \norm{e^{\frac{\psi_\varepsilon}{h}}(-h^2\Delta)^me^{-\frac{\psi_\varepsilon}{h}}u}_{\Hs^s(\R^n)},
\end{equation}
for any $u\in C^\infty_0(\Omega)$ and $h, \varepsilon>0$ small enough.
\medskip

Armed with inequality (\ref{2.7}) we shall derive a local Carleman estimate for the perturbed polyharmonic operator $\LL_{A,B,q}(x,D)$. Consider the conjugate operator corresponding to the perturbed polyharmonic operator  
\begin{align}\label{2.8}
\hspace{-0.7 cm}\LL_\psi(x,hD)&=e^{\frac{\psi}{h}}(h^{2m}\LL_{A,B,q}(x,D))e^{-\frac{\psi}{h}}\cr
&=e^{\frac{\psi}{h}}(-h^2\Delta)^{m}e^{-\frac{\psi}{h}}+h^{2m-2}e^{\frac{\psi}{h}}A (hD)\cdot (hD) e^{-\frac{\psi}{h}}+h^{2m-1}e^{\frac{\psi}{h}}B\cdot( hD)e^{-\frac{\psi}{h}}+h^{2m}q.
\end{align}

In order to estimate the lower order terms in \eqref{2.8}, we use the following result.

\begin{lemma}\label{L2.2}
Let $t_1$, $t_2 \in \mathbb{R}$ and $V\in H^{t_2}(\Omega)$ with $t_2 > \frac{n}{2}+|t_1|$. Then the estimate 
\begin{align}\label{2.9}
    \| V u \|_{\Hs^{t_1}(\Omega)} \les  \| V \|_{H^{t_2}(\Omega)} \|u\|_{\Hs^{t_1}(\Omega)},
\end{align}
holds  for all $u\in C_0^{\infty}(\Omega)$.
\end{lemma}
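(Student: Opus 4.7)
The plan is to reduce the estimate to a Fourier-analytic multiplier bound on $\R^n$, combining Peetre's inequality with Young's convolution inequality, and then exploiting the hypothesis $t_2>\frac{n}{2}+|t_1|$ to obtain summability via Cauchy--Schwarz.

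First I would reduce to $\R^n$. Since $u\in C_0^\infty(\Omega)$, its extension by zero $\tilde u\in C_0^\infty(\R^n)$ has the same semiclassical norm. Using a bounded extension operator $E\colon H^{t_2}(\Omega)\to H^{t_2}(\R^n)$ and replacing $V$ by $EV$, I obtain $Vu=(EV)\tilde u$ as elements of $L^2(\R^n)$, and $\|EV\|_{H^{t_2}(\R^n)}\les \|V\|_{H^{t_2}(\Omega)}$. Moreover $\|Vu\|_{\Hs^{t_1}(\Omega)}\les \|(EV)\tilde u\|_{\Hs^{t_1}(\R^n)}$. Thus it is enough to prove the inequality on $\R^n$ for $V\in H^{t_2}(\R^n)$ and $u\in \mathcal{S}(\R^n)$.

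Next I pass to Fourier variables. Writing $\widehat{Vu}=\hat V*\hat u$ and using Peetre's inequality
$$\langle h\xi\rangle^{t_1}\les \langle h(\xi-\eta)\rangle^{|t_1|}\langle h\eta\rangle^{t_1},$$
I get the pointwise bound
$$\langle h\xi\rangle^{t_1}|\widehat{Vu}(\xi)|\les \int_{\R^n}\langle h(\xi-\eta)\rangle^{|t_1|}|\hat V(\xi-\eta)|\,\langle h\eta\rangle^{t_1}|\hat u(\eta)|\,d\eta =(f*g)(\xi),$$
where $f(\zeta)=\langle h\zeta\rangle^{|t_1|}|\hat V(\zeta)|$ and $g(\eta)=\langle h\eta\rangle^{t_1}|\hat u(\eta)|$. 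Note that $\|g\|_{L^2(\R^n)}=\|u\|_{\Hs^{t_1}(\R^n)}$. By Plancherel and Young's convolution inequality $L^1*L^2\hookrightarrow L^2$,
$$\|Vu\|_{\Hs^{t_1}(\R^n)}\les \|f*g\|_{L^2(\R^n)}\les \|f\|_{L^1(\R^n)}\,\|u\|_{\Hs^{t_1}(\R^n)}.$$

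It remains to bound $\|f\|_{L^1}$ by $\|V\|_{H^{t_2}}$ uniformly in $h\in(0,1]$. Since $\langle h\zeta\rangle^{|t_1|}\leq \langle \zeta\rangle^{|t_1|}$ for $h\leq 1$, Cauchy--Schwarz gives
$$\|f\|_{L^1(\R^n)}\leq \int_{\R^n}\langle \zeta\rangle^{|t_1|-t_2}\,\langle\zeta\rangle^{t_2}|\hat V(\zeta)|\,d\zeta \leq \Bigl(\int_{\R^n}\langle\zeta\rangle^{-2(t_2-|t_1|)}d\zeta\Bigr)^{1/2}\|V\|_{H^{t_2}(\R^n)},$$
and the integral on the right is finite precisely because $2(t_2-|t_1|)>n$, i.e.\ $t_2>\frac{n}{2}+|t_1|$. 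Combining the displays yields the claimed inequality with a constant independent of $h$.

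The only mildly delicate point is to keep the constants independent of $h$; this is ensured by the inequality $\langle h\zeta\rangle^{|t_1|}\leq \langle \zeta\rangle^{|t_1|}$, which lets us replace the semiclassical weight on the multiplier $V$ by the classical one and then invoke the hypothesis on $t_2$. Everything else is a routine application of Peetre's inequality and Young's convolution inequality.
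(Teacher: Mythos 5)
Your proof is correct and takes essentially the same approach as the paper: extend $V$ to $\R^n$, write $\widehat{Vu}$ as a convolution, apply Peetre's inequality to transfer the semiclassical weight, invoke Young's inequality $L^1\ast L^2\hookrightarrow L^2$, and finish with Cauchy--Schwarz using the integrability hypothesis $t_2>\frac n2+|t_1|$. The only cosmetic difference is that you keep the semiclassical weight $\langle h\zeta\rangle^{|t_1|}$ on the $V$-factor until the final step and then drop $h$ via $\langle h\zeta\rangle\leq\langle\zeta\rangle$, whereas the paper removes $h$ immediately inside its stated Peetre inequality; the substance is identical.
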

\begin{proof}
At first, we extend $V$ to $\R^{n}$ and we consider $\tilde{V}\in H^{t_2}(\R^{n})$ with $\tilde{V}=V$, in $\Omega$, and 
$$\Vert \tilde{V}\Vert_{H^{t_2}(\R^{n})}\les \,\Vert V\Vert_{H^{t_2}(\Omega)}.$$
%We introduce the Fourier transform of $u\in C_0^{\infty}(Q)$ defined by:
%$$\displaystyle\widehat{u}(\tau,\xi)= \int_{\R^{n+1}}e^{-i(t\tau+x\cdot\xi)}\,f(t,x)\,dx\,dt,\;\;(\tau,\xi)\in \R^{n+1},$$
Then, we get for $u\in C_0^{\infty}(\Omega)$ 
$$\mathcal{F}(\tilde{V}u)(\xi)=(\widehat{\tilde{V}}\ast\widehat{u})(\xi)=\int_{\R^{n}}\widehat{\tilde{V}}(\xi-\eta )\widehat{u}(\eta )\,d\eta, $$
which leads directly to
\begin{equation}\label{A.6*}
\langle h \xi\rangle^{t_1}\mathcal{F}(\tilde{V}u)(\xi)=\int_{\R^{n}}\langle h\xi\rangle^{t_1}\langle h \eta \rangle^{-t_1}\,\widehat{\tilde{V}}(\xi-\eta )\langle h \eta \rangle^{t_1}\widehat{u}(\eta )\,d\eta.
\end{equation}
 Now, we apply the Peetre's inequality (see \cite{CP} page 90) 
\begin{equation}\label{A.7*}
\langle h \xi\rangle^{t_1}\langle h \eta  \rangle^{-t_1}\leq 2^{\abs{t_1}/2}\,\langle \xi-\eta \rangle^{\abs{t_1}/2},\quad \forall\,\xi,\, \eta \in \R^{n}.
\end{equation}
Further, we define two functions $w_1$ and $w_2$ by
$$ w_1(\xi)=\langle h\xi\rangle^{t_1}\vert\widehat{u}(\xi)\vert,\;\;w_2(\xi)=\seq{\xi}^{\abs{t_1}}\,\vert\widehat{\tilde{V}}(\xi)\vert,$$
so that we get using (\ref{A.6*}) and (\ref{A.7*}) that
$$\displaystyle\vert \langle h\xi \rangle^{t_1}\mathcal{F}(\tilde{V}u)(\xi)\vert\les \int_{\R^{n}}w_2(\xi-\eta )\,w_1(\eta )\,d\eta=(w_1 \ast w_2)(\xi).$$
Consequently
\begin{align*}
\displaystyle\Vert \tilde{V}u\Vert_{\Hs^{t_1}(\R^n)}&\les \displaystyle\Vert w_1 \ast w_2\Vert_{L^2(\R^{n})}\\
& \les \,\Vert w_1 \Vert_{L^2(\R^{n})}\,\Vert w_2\Vert_{L^1(\R^{n})}\\
%&\leq C\,\Vert u\Vert_{L^2(\Hs^{s}(\R^n))}\int_{\R^{n}} (1+\vert h\xi\vert^2)\,\vert\widehat{z}(\xi)\vert\,d\xi \\
%&\leq C\,\Vert u\Vert_{L^2(\Hs^{s}(\R^n))}\int_{\R^{n}} (1+\vert h\xi \vert^2)^{1-\frac{m}{2}}(1+\vert(\tau,\xi)\vert^2)^{\frac{m}{2}}\vert\widehat{z}(\tau,\xi)\vert\,d\xi\,d\tau\\
&\les \,\Vert u\Vert_{\Hs^{t_1}(\R^n)}\,\Vert V\Vert_{H^{t_2}(\Omega)}\Big(\int_{\R^{n}}\frac{d\xi}{(1+\vert \xi\vert^2)^{t_2-|t_1|}}\Big)^{1/2}.
 \end{align*}
Since $t_2 > \frac{n}{2}+|t_1|$ then the last integral converge. This ends the proof.
\end{proof}

Armed with Lemma \ref{L2.2}, we are now in position to derive a Carleman estimate to the perturbed polyharmonic operator, which can be stated as follows.

\begin{lemma}\label{P2.3}
Let $A\in \mathcal{A}_{\sigma_1}(M)$, $B \in \mathcal{B}_{\sigma_2}(M)$ and $q \in \mathcal{Q}_{}(M)$ and let $\psi$ be the limiting Carleman weight function given by (\ref{2.2}) and $-2m\leq s\leq 0$. Then the local Carleman estimate
\begin{equation}\label{2.12}
h^m\norm{u}_{\Hs^{s+2m}(\R^n)}\les \norm{\LL_\psi(x,hD)u}_{\Hs^s(\R^n)}\quad m\geq2,
\end{equation}
holds for all $u\in C_0^\infty(\Omega)$ and all $h>0$ small enough.
\end{lemma}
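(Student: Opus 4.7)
The plan is to treat $\LL_{\psi_\varepsilon}$ as a perturbation of the leading conjugated polyharmonic operator and to exploit the gain $(h/\sqrt\varepsilon)^m$ in the Carleman estimate \eqref{2.7} to absorb the lower order contributions carried by $A$, $B$ and $q$. First I would apply \eqref{2.7} with weight $\psi_\varepsilon$, which handles the principal part $P_{\psi_\varepsilon}^m := e^{\psi_\varepsilon/h}(-h^2\Delta)^m e^{-\psi_\varepsilon/h}$. Using the elementary identity $e^{\psi_\varepsilon/h}(hD_j)e^{-\psi_\varepsilon/h} = hD_j + i\p_j\psi_\varepsilon$ together with the uniform bound $\|\psi_\varepsilon\|_{W^{2,\infty}(\Omega)}=O(1)$ from \eqref{2.5}, the full conjugated operator splits as
\[
\LL_{\psi_\varepsilon} = P_{\psi_\varepsilon}^m + h^{2m-2}\mathcal{A}_\varepsilon + h^{2m-1}\mathcal{B}_\varepsilon + h^{2m}q,
\]
where $\mathcal{A}_\varepsilon$ and $\mathcal{B}_\varepsilon$ are semiclassical differential operators of orders $2$ and $1$ whose coefficients are polynomial expressions in the entries of $A$, $B$ and $\p\psi_\varepsilon$, uniformly bounded in $h$.

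The next step is to control each remainder in $\Hs^s(\R^n)$. Applying Lemma \ref{L2.2} to the multipliers $A_{jk}$, $B_j$ and $q$ (whose hypotheses are satisfied thanks to $\sigma_1>\tfrac{n}{2}+3$ and $\sigma_2>\tfrac{n}{2}+1$, which cover the exponents $t_1\in\{s,s+1,s+2\}$ that arise for $-2m\leq s\leq 0$), I obtain
\[
\|h^{2m-2}\mathcal{A}_\varepsilon u\|_{\Hs^s} + \|h^{2m-1}\mathcal{B}_\varepsilon u\|_{\Hs^s} + h^{2m}\|qu\|_{\Hs^s} \les M\,h^{2m-2}\|u\|_{\Hs^{s+2m}}.
\]
Combining this with \eqref{2.7} via the triangle inequality then yields
\[
\frac{h^m}{\varepsilon^{m/2}}\|u\|_{\Hs^{s+2m}} \les \|\LL_{\psi_\varepsilon} u\|_{\Hs^s} + C(M)\,h^{2m-2}\|u\|_{\Hs^{s+2m}}.
\]

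The delicate point, which I expect to be the main obstacle, is the borderline case $m=2$: there $h^{2m-2}=h^m$, so the error term sits at exactly the same order as the left-hand side and there is no extra smallness from $h$ alone. I would resolve this by fixing $\varepsilon\in(0,1)$ once and for all small enough that $C(M)\varepsilon^{m/2}\leq 1/2$; in the easier case $m>2$ the factor $h^{m-2}\to 0$ would provide the smallness after choosing $h$ small. Absorption then produces the intermediate bound $h^m\|u\|_{\Hs^{s+2m}} \les \|\LL_{\psi_\varepsilon}u\|_{\Hs^s}$.

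Finally, to pass back to the linear weight $\psi$ that appears in the statement, I would use the identity $\LL_{\psi_\varepsilon} = e^{\psi^2/(2\varepsilon)}\,\LL_{\psi}\,e^{-\psi^2/(2\varepsilon)}$, which follows directly from $\psi_\varepsilon - \psi = h\psi^2/(2\varepsilon)$. Applying the preceding inequality to $u = e^{\psi^2/(2\varepsilon)}w$ with $w\in C_0^\infty(\Omega)$ and using that the smooth factors $e^{\pm\psi^2/(2\varepsilon)}$ have bounded derivatives on $\overline{\Omega}$ (for $\varepsilon$ now fixed) and therefore induce bounded bijections of $\Hs^\sigma(\R^n)$ for every real $\sigma$, the estimate transfers to $\LL_\psi$ and produces the desired inequality \eqref{2.12}.
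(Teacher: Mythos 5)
Your proof is correct and follows essentially the same strategy as the paper's: iterate the Carleman estimate \eqref{2.7} with the convexified weight $\psi_\varepsilon$, control the $A$, $B$, $q$ contributions term by term using Lemma \ref{L2.2}, and absorb the resulting $h^{2m-2}\|u\|_{\Hs^{s+2m}}$ error by first fixing $\varepsilon$ small and then taking $h$ small (which, as you rightly emphasize, is exactly what the borderline case $m=2$ forces). You actually go one careful step further than the written proof: you explicitly convert the estimate for $\LL_{\psi_\varepsilon}$ back to the linear weight $\psi$ via the conjugation $\LL_{\psi_\varepsilon}=e^{\psi^2/(2\varepsilon)}\LL_\psi e^{-\psi^2/(2\varepsilon)}$ and the uniform boundedness of the (cut-off) multiplication operators $e^{\pm\psi^2/(2\varepsilon)}$ on the semiclassical Sobolev scale, a reduction the paper leaves implicit when it concludes ``we get our desired estimate.''
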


\begin{proof}
To get the Carleman estimate for $\LL_\psi(x,hD)$, we add the lower order perturbation to \eqref{2.7}. First we add the zero order perturbation $q$ then for $-2m \leq s \leq 0$, we get
\begin{align}\label{q1}
    \|h^{2m} q u\|_{\Hs^s(\R^n)} &\leq h^{2m} \|q\|_{L^\infty(\R^n)} \|u\|_{L^2(\R^n)}
    \les h^{2m} \|u\|_{\Hs^{s+2m}(\R^n)}.
\end{align}
Next, we consider the first order perturbation $B$, we compute 
\begin{align*}
    h^{2m-1}e^{\frac{\psi_\varepsilon}{h}}(B\cdot hD)e^{-\frac{\psi_\varepsilon}{h}}u=h^{2m-1}[i(B\cdot \nabla \psi_\varepsilon)u+ B \cdot (hD)u].
\end{align*}
Moreover, we have $B\cdot(h D)u=h D\cdot (Bu)-h(D\cdot B)u$. Then, by using the fact that $\psi_\varepsilon$ satisfies \eqref{2.5}, we obtain, for $0<h<\varepsilon<1$, that
\begin{align}
 \|h^{2m-1}e^{\frac{\psi_\varepsilon}{h}}(B\cdot hD)e^{-\frac{\psi_\varepsilon}{h}}u\|_{\Hs^s(\R^n)}  
  & \les  h^{2m-1} \bigr( \|u\|_{\Hs^{s+2m}(\R^n)}+ \|Bu\|_{\Hs^{s+1}(\R^n)}\bigr).\label{Bu}
\end{align}
Now we estimate the last term in \eqref{Bu}, to do this we distinguish two cases: if $-2m \leq s \leq -1$, we have immediately 
\begin{align}\label{bu_1}
   \|Bu\|_{\Hs^{s+1}(\R^n)} \leq  \|B\|_{L^\infty(\Omega)} \|u\|_{\Hs^{s+2m}(\R^n)}, 
\end{align}
and if $-1 \leq s \leq 0$, we get from Lemma \ref{L2.2}, for $t_2:=\sigma_2 > \frac{n}{2}+1$ and $t_1:=s+1$,
\begin{align}\label{bu_2}
   \|Bu\|_{\Hs^{s+1}(\R^n)} \leq  \|B\|_{H^{\sigma_2}(\Omega)} \|u\|_{\Hs^{s+1}(\R^n)}.
\end{align}
Now plugging \eqref{bu_1} and \eqref{bu_2} into \eqref{Bu}, we conclude for $-2m \leq s \leq 0$,
\begin{align}\label{B_}
     \|h^{2m-1}e^{\frac{\psi_\varepsilon}{h}}(B\cdot hD)e^{-\frac{\psi_\varepsilon}{h}}u\|_{\Hs^s(\R^n)}  \les  h^{2m-1} \|u\|_{\Hs^{s+2m}(\R^n)}.
\end{align}
We end up with the term involving the second order perturbation $A$, by a simple computation we have
\begin{align*}
    h^{2m-2}e^{\frac{\psi_\varepsilon}{h}}A(hD)\cdot (hD) e^{-\frac{\psi_\varepsilon}{h}}u&=h^{2m-2} A  D \psi_\varepsilon \cdot D \psi_\varepsilon u-2h^{2m-2} A  D \psi_\varepsilon \cdot (hD)  u\cr
    &\quad-h^{2m-1} A  D \cdot D \psi_\varepsilon  u+h^{2m-2} A (hD) \cdot (hD) u .
\end{align*}
Since 
$$   A  D \psi_\varepsilon \cdot (hD)u=\sum_{j,k=1}^{n}h D_k(A_{jk}(D_j\psi_\varepsilon) u)-\sum_{j,k=1}^{n}h D_k(A_{jk}D_j\psi_\varepsilon )u,
$$
and
\begin{align*}
A (hD) \cdot (hD) u&=\sum_{j,k=1}^{n} h^2 D _{j}D_{k}(A_{jk}u)-h^2 \sum_{j,k=1}^{n} ( D_{j}D_{k}A_{jk})u-2h\sum_{j,k=1}^{n} 
(D_{j}A_{jk})(h D_{k}u)    \\
&=\sum_{j,k=1}^{n} h^2 D _{j}D_{k}(A_{jk}u)+h^2 \sum_{j,k=1}^{n} ( D_{j}D_{k}A_{jk})u-2h\sum_{j,k=1}^{n} 
h D_{k}\bigr(D_{j}(A_{jk})u\bigr).
\end{align*}
Then we obtain, for $0<h<\varepsilon<1$
\begin{align*}
  & \| h^{2m-2}e^{\frac{\psi_\varepsilon}{h}}A(hD)\cdot (hD) e^{-\frac{\psi_\varepsilon}{h}}u\|_{\Hs^s(\R^n)} \\&\les  h^ {2m-2} \bigr(    \| u\|_{\Hs^{s+2m}}+\sum_{j,k=1}^{n} \bigr(\|A_{jk}(D_j\psi_\varepsilon) u\|_{\Hs^{s+2}}+ \|A_{jk} u\|_{\Hs^{s+2}}
  +h \|D_{j}(A_{jk})  u\|_{\Hs^{s+2}}\bigr)\bigr).
\end{align*}
Again to estimate the last three terms in the above inequality, we distinguish two cases: if $-2 \leq s \leq 0$ , we obtain from Lemma \ref{L2.2} 
\begin{align*}
&\| h^{2m-2}e^{\frac{\psi_\varepsilon}{h}}A(hD)\cdot (hD) e^{-\frac{\psi_\varepsilon}{h}}u\|_{\Hs^s(\R^n)}\cr  &\les h^ {2m-2} \Big(     \| u\|_{\Hs^{s+2m}(\R^n)}+\sum_{j,k=1}^{n}\bigr( \|A_{jk}D_j\psi_\varepsilon  \|_{H^{\sigma_1}(\Omega)}\| u\|_{\Hs^{s+2}(\R^n)}+\|A_{jk} \|_{H^{\sigma_1}(\Omega)}\| u\|_{\Hs^{s+2}(\R^n)}\\&\qquad+\|D_{j}A_{jk} \|_{H^{\sigma_1-1}(\Omega)} \| u\|_{\Hs^{s+2}(\R^n)}\bigr)\Big)
  \\&\les  h^ {2m-2}    \| u\|_{\Hs^{s+2m}(\R^n)}, 
\end{align*}
with $\sigma_1 > \frac{n}{2}+3$. Now if $-2m \leq s < -2$, we get immediately
\begin{align*}
\| h^{2m-2}e^{\frac{\psi_\varepsilon}{h}}A(hD)\cdot (hD) e^{-\frac{\psi_\varepsilon}{h}}u\|_{\Hs^s(\R^n)} &\les 
 h^ {2m-2}    \| u\|_{\Hs^{s+2m}(\R^n)}.
\end{align*}
Then, we conclude for $-2m \leq s \leq 0$,
\begin{align}\label{A_}
    \| h^{2m-2}e^{\frac{\psi_\varepsilon}{h}}A(hD)\cdot (hD) e^{-\frac{\psi_\varepsilon}{h}}u\|_{\Hs^s(\R^n)} &\les h^ {2m-2}    \| u\|_{\Hs^{s+2m}(\R^n)}.
\end{align}
Combining \eqref{q1}, \eqref{B_}  and \eqref{A_} we get from \eqref{2.7} 
\begin{align*}
   ( \frac{h}{\sqrt{\varepsilon}})^m \| u\|_{\Hs^{s+2m}(\R^n)} \les \| h^{2m}e^{\frac{\psi_\varepsilon}{h}}\LL_{A, B, q} e^{-\frac{\psi_\varepsilon}{h}}u\|_{\Hs^{s}(\R^n)}+h^{2m-2}\|u\|_{\Hs^{s+2m}(\R^n)}.
\end{align*}
Choosing now $h < \varepsilon <1$ small enough, we get our desired estimate.
\end{proof}

Let $\LL^*_\psi(x,hD)$ the formal adjoint of $\LL_\psi(x,hD)$ given by $(\LL_\psi u, v   )_{L^2(\Omega)}= ( u, \LL^*_\psi v )_{L^2(\Omega)},~~u,v \in C_{0}^{\infty }(\Omega )$. Notice that $\LL^*_\psi(x,hD)$  has the same form as $\LL_\psi(x,hD)$ except $\psi$ is replaced by $-\psi$ and $A,B$ and $q$ are replaced by
\begin{equation}\label{2.19}  
A_{jk}^*=\overline{A_{jk}},\quad B_k^*=\overline{B_k}+2\sum_{j=1}^n D_j \overline{A_{jk}},\quad q^*=\overline{q}-i\dive\overline{B}+\sum_{j,k=1}^n D_k D_j \overline{A_{jk}},\quad j,k=1, \ldots, n.
\end{equation}
Since $A^*$, $B^*$ and $q^*$ lies in the same admissible sets as $A, B$ and $q$, then we have the same local Carleman estimate for $\LL^*_\psi(x,hD)$.

\begin{lemma}\label{P2.4}
Let $A\in \mathcal{A}_{\sigma_1}(M)$, $B \in \mathcal{B}_{\sigma_2}(M)$ and $q \in \mathcal{Q}_{}(M)$ and let $\psi$ be the limiting Carleman weight function given by (\ref{2.2}) and $-2m\leq s\leq 0$. Then the local Carleman estimate
\begin{equation}\label{2.20}
h^m\norm{u}_{\Hs^{s+2m}(\R^n)}\les \norm{\LL^*_\psi(x,hD)u}_{\Hs^s(\R^n)},
\end{equation}
holds for all $u\in C_0^\infty(\Omega)$, and all $h>0$ small enough.
\end{lemma}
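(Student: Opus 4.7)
The strategy is to deduce this estimate directly from Lemma \ref{P2.3} applied to a related operator. The key observation is that $\LL^*_\psi(x,hD)$ has exactly the structure of a conjugated perturbed polyharmonic operator associated with the weight $-\psi$ and coefficients $(A^*, B^*, q^*)$ given by \eqref{2.19}; explicitly,
$$\LL^*_\psi(x,hD) = e^{-\psi/h}\bigl(h^{2m}\LL_{A^*, B^*, q^*}(x,D)\bigr) e^{\psi/h}.$$
Since $\psi(x) = x \cdot \omega$ with $\omega \in \s^{n-1}$, the function $-\psi(x) = x \cdot (-\omega)$ is still of the form \eqref{2.2}, hence a limiting Carleman weight; its convexification satisfies the same $W^{2,\infty}$ bound as in \eqref{2.5}. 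Thus the iterated Carleman estimate \eqref{2.7} for $(-h^2\Delta)^m$ carries over verbatim with $\psi_\varepsilon$ replaced by the convexification of $-\psi$.

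The remaining step is to verify that $(A^*, B^*, q^*)$ belong, up to a multiplicative constant depending only on $M$, to the same admissible classes $\mathcal{A}_{\sigma_1}(\cdot)$, $\mathcal{B}_{\sigma_2}(\cdot)$, $\mathcal{Q}(\cdot)$. For $A^* = \overline{A}$ this is immediate, and the isotropy condition \eqref{cond_A} in the case $m=2$ is preserved under complex conjugation. For $B^*_k = \overline{B_k} + 2\sum_j D_j \overline{A_{jk}}$, the extra term is controlled in $H^{\sigma_2}(\Omega)$ by $\|A\|_{H^{\sigma_2+1}(\Omega)}$, which is bounded by $\|A\|_{H^{\sigma_1}(\Omega)} \leq M$ provided $\sigma_1 \geq \sigma_2 + 1$; if this gap is not directly available, one works with $\min(\sigma_2, \sigma_1 - 1)$, which still exceeds $n/2+1$ thanks to $\sigma_1 > n/2+3$. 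A similar argument, using $A \in W^{\min(2m,6),\infty}(\Omega)$ and $B \in W^{4,\infty}(\Omega)$, shows that $q^* = \overline{q} - i\dive\overline{B} + \sum_{j,k} D_kD_j\overline{A_{jk}}$ lies in $L^\infty(\Omega)$ with $\|q^*\|_{L^\infty(\Omega)} \les M$.

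With (i) $-\psi$ a limiting Carleman weight and (ii) $(A^*, B^*, q^*)$ in admissible classes of the same type, I apply Lemma \ref{P2.3} to the operator $\LL_{A^*, B^*, q^*}$ with weight $-\psi$, obtaining for $u \in C_0^\infty(\Omega)$, $-2m \leq s \leq 0$, and $h > 0$ small enough,
$$h^m \norm{u}_{\Hs^{s+2m}(\R^n)} \les \norm{e^{-\psi/h}(h^{2m}\LL_{A^*, B^*, q^*})e^{\psi/h} u}_{\Hs^s(\R^n)} = \norm{\LL^*_\psi(x,hD) u}_{\Hs^s(\R^n)},$$
which is precisely \eqref{2.20}. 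The only non-trivial point is the Sobolev bookkeeping for $B^*$ and $q^*$, where one has to ensure that the derivatives of $A$ and $B$ appearing in \eqref{2.19} remain in the function spaces demanded by the admissibility classes; this uses nothing beyond the quantitative regularity gap built into the definitions of $\sigma_1$, $\sigma_2$, and the choices $W^{\min(2m,6),\infty}$ and $W^{4,\infty}$. No new analytic idea is needed beyond the proof of Lemma \ref{P2.3}.
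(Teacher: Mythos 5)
Your proposal is correct and follows the same route as the paper: the paper also justifies Lemma \ref{P2.4} by observing that $\LL^*_\psi(x,hD)$ is the conjugated operator associated with weight $-\psi$ and coefficients $(A^*,B^*,q^*)$, and then invoking the Carleman estimate of Lemma \ref{P2.3} for that operator. The paper asserts in one sentence that $(A^*,B^*,q^*)$ ``lies in the same admissible sets,'' whereas you carefully carry out the Sobolev bookkeeping (in particular that $B^*\in H^{\min(\sigma_2,\sigma_1-1)}$ with exponent still above $n/2+1$), which is precisely the verification the paper leaves implicit.
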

As a consequence of the Carleman estimate (\ref{2.20}) and the Hahn-Banach theorem, we have the following solvability result for $\LL_\psi(x,hD)$. The proof is essentielly well-known and we omit it (see \cite{Gk, K2, K1}).
\begin{lemma}\label{P2.5}
Let $A\in \mathcal{A}_{\sigma_1}(M)$, $B \in \mathcal{B}_{\sigma_2}(M)$ and $q \in \mathcal{Q}_{}(M)$ and let $\psi$ be the limiting Carleman weight function given by (\ref{2.2}). Then for any $\g\in L^2(\Omega)$ the equation
\begin{equation}\label{2.21}
\LL_\psi(x,hD)r=\g\quad \textrm{in}\,\,\Omega,
\end{equation}
has a solution $r\in H^{2m}(\Omega)$, which satisfying the estimate
\begin{equation}\label{2.22}
h^m\norm{r}_{\Hs^{2m}(\Omega)}\les \norm{\g}_{L^2(\Omega)},
\end{equation}
for all $h>0$ small enough.
\end{lemma}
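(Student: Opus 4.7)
The plan is to derive existence and the quantitative bound from the adjoint Carleman estimate of Lemma \ref{P2.4} via a standard duality (Hahn--Banach) argument. The only input we need is injectivity plus a one-sided a priori bound for $\LL_\psi^\ast$, both of which are supplied by \eqref{2.20}.

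First, I specialize the adjoint Carleman estimate to $s=-2m$, which yields
\begin{equation*}
h^m\,\|u\|_{L^2(\R^n)}\les \|\LL^*_\psi(x,hD)u\|_{\Hs^{-2m}(\R^n)},\qquad u\in C_0^\infty(\Omega),
\end{equation*}
for all sufficiently small $h>0$. In particular $\LL^*_\psi\colon C_0^\infty(\Omega)\to \Hs^{-2m}(\R^n)$ is injective, so the linear subspace
\begin{equation*}
E:=\set{\LL^*_\psi(x,hD)u\,:\,u\in C_0^\infty(\Omega)}\subset \Hs^{-2m}(\R^n)
\end{equation*}
is canonically identified with $C_0^\infty(\Omega)$.

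Next, for the given $\g\in L^2(\Omega)$, define the linear functional $T\colon E\to\C$ by
\begin{equation*}
T(\LL^*_\psi u):=(u,\overline{\g})_{L^2(\Omega)}=\int_\Omega u\,\g\,dx.
\end{equation*}
The map is well defined on $E$ by injectivity. Using Cauchy--Schwarz together with the Carleman estimate above I obtain
\begin{equation*}
\abs{T(\LL^*_\psi u)}\le \|u\|_{L^2(\Omega)}\|\g\|_{L^2(\Omega)}\les h^{-m}\,\|\g\|_{L^2(\Omega)}\,\|\LL^*_\psi u\|_{\Hs^{-2m}(\R^n)},
\end{equation*}
so $T$ is a bounded linear functional on $E$ with operator norm controlled by $h^{-m}\|\g\|_{L^2(\Omega)}$.

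By the Hahn--Banach theorem, $T$ extends to a continuous linear functional $\widetilde{T}$ on the whole space $\Hs^{-2m}(\R^n)$ without increasing the norm. Invoking the duality $(\Hs^{-2m}(\R^n))^\ast\simeq \Hs^{2m}(\R^n)$ (via the semiclassical $L^2$ pairing), I obtain $r\in \Hs^{2m}(\R^n)$ such that $\widetilde{T}(\cdot)=(r,\cdot)_{L^2}$ and
\begin{equation*}
h^m\,\|r\|_{\Hs^{2m}(\R^n)}\les \|\g\|_{L^2(\Omega)}.
\end{equation*}
Testing against $\LL^*_\psi u$ for $u\in C_0^\infty(\Omega)$ gives $(r,\LL^*_\psi u)_{L^2(\Omega)}=(u,\g)_{L^2(\Omega)}$, which by definition of $\LL^*_\psi$ means $\LL_\psi(x,hD)r=\g$ in $\mathcal{D}'(\Omega)$. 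Finally, restricting $r$ to $\Omega$ and using interior elliptic regularity of $(-h^2\Delta)^m$ with the lower order terms treated by Lemma \ref{L2.2} (to absorb $A$, $B$, $q$ contributions into the $L^2$ right-hand side), I upgrade $r\in \Hs^{2m}(\R^n)$ to $r\in H^{2m}(\Omega)$ with the same quantitative bound \eqref{2.22}.

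The only delicate point is the duality/identification step: one must be careful that the Hahn--Banach extension yields an element of $\Hs^{2m}(\R^n)$ in the \emph{semiclassical} norm used throughout, so the factor $h^m$ in the final estimate has the correct power; this is automatic once the pairing in the definition of $T$ is written with the semiclassical $\langle hD\rangle$ weights consistently with \eqref{2.3}. Everything else is routine once the adjoint Carleman estimate \eqref{2.20} is in hand.
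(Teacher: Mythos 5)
The paper omits the proof of Lemma \ref{P2.5}, saying only that it follows ``as a consequence of the Carleman estimate (\ref{2.20}) and the Hahn--Banach theorem'' and referring the reader to \cite{Gk,K2,K1}. Your proposal supplies exactly that omitted argument: specialize \eqref{2.20} to $s=-2m$, define a functional on the range of $\LL^*_\psi$, bound it by the adjoint Carleman estimate, extend by Hahn--Banach, and represent the extension by an element of $\Hs^{2m}(\R^n)$ via the $L^2$ duality $(\Hs^{-2m})^*\simeq\Hs^{2m}$. This is the standard approach and matches the paper's intent.

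Two small points worth tightening. First, there is a conjugation mismatch: you define $T(\LL^*_\psi u)=\int_\Omega u\,\g\,dx$, which is \emph{linear} in $u$, and then represent the extension as $\widetilde T(\cdot)=(r,\cdot)_{L^2}$, which is \emph{anti-linear} in its argument when $(\cdot,\cdot)_{L^2}$ is the sesquilinear pairing. Unwinding the definition $(\LL_\psi u,v)=(u,\LL^*_\psi v)$ with your choices gives $\LL_\psi\overline r=\overline\g$ rather than $\LL_\psi r=\g$. The clean fix is to take $T(\LL^*_\psi u):=(\g,u)_{L^2}$, which is anti-linear in $\LL^*_\psi u$, extend it as a bounded anti-linear functional, and represent $\widetilde T(v)=(r,v)_{L^2}$; then $(r,\LL^*_\psi u)=(\g,u)$ for all $u\in C_0^\infty(\Omega)$ gives $\LL_\psi r=\g$ directly. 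Second, the final ``interior elliptic regularity'' step is unnecessary: the duality already yields $r\in\Hs^{2m}(\R^n)$, which as a set coincides with $H^{2m}(\R^n)$ (only the norm carries the $h$-dependence), so $r|_\Omega\in H^{2m}(\Omega)$ is immediate and the bound \eqref{2.22} follows by restriction. Your observation that the semiclassical $\langle hD\rangle$ weight must be used consistently so that the $h^m$ appears with the right power is apt and correctly handled.
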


\subsection{Construction of C.G.O- solutions}
We now give the construction of special solution to the polyharmonic equation $\LL_{A,B,q}(x,D)u=0$ in $\Omega$. This proceeds as in \cite{B2} except we precise the constant on the construction of the remainder term in the C.G.O- solutions. The constructed solutions will have the special form 
\begin{equation}\label{2.23}
u(x,h)=e^{\frac{\varphi}{h}}(\ama_1(x)+h \ama_2(x)+r(x,h)),\quad x\in\Omega,
\end{equation}
where $\varphi(x)=\psi(x)+i\widetilde{\psi}(x)$, $\psi$ and $\widetilde{\psi}$ are the real valued functions given by
\begin{equation}\label{2.24}
\psi(x)=x\cdot\omega,\quad \widetilde{\psi}(x)=x\cdot\tilde{\omega},
\end{equation}
and $\omega,\widetilde{\omega}\in\s^{n-1}$, with $\omega\cdot\tilde{\omega}=0$. Let $\varrho=\omega+i\tilde{\omega}$, the functions $\ama_1$ and $\ama_2$ as an amplitude which respectively solve the transport equations

\begin{equation}\label{2.25}
T_{\varrho}(x,D)\ama_1:=(\varrho\cdot\nabla)^m\ama_1=0,\quad \textrm{in}\,\,\Omega,\quad\textrm{for}~m \geq 2,
\end{equation}
and 
\begin{equation}\label{2.26}
(2\varrho\cdot\nabla)^m\ama_2=\left\{\begin{matrix}
\bigr(-4\Delta(\varrho \cdot \nabla)+2A\varrho \cdot \nabla +i \varrho \cdot B\bigr) \ama_1,&\quad \textrm{if}~~m=2, \\
\bigr(-12\Delta(\varrho \cdot \nabla)^{2}-A\varrho \cdot \varrho\bigr)\ama_1,&\quad \textrm{if}~~m=3,\\
-m\Delta(2\varrho \cdot \nabla)^{m-1}\ama_1,&\quad \textrm{if}~~ m>3.
\end{matrix}\right.
\end{equation}
Finally, the correction term $r(\cdot, h)\in H^{2m}(\Omega)$.\\
The solution of the transport equation \eqref{2.25} always exists and can be taken in $C^\infty(\overline{\Omega})$. We will present the existence of complex amplitude $\ama_2$ satisfying \eqref{2.26} in Subsection \ref{Sect3}.

\begin{lemma}\label{L2.6}
Let $A\in \mathcal{A}_{\sigma_1}(M)$, $B \in \mathcal{B}_{\sigma_2}(M)$ and $q \in \mathcal{Q}_{}(M)$ and $\varphi$ as above. Then for any $\ama_1 \in C^\infty(\overline{\Omega})$ and $\ama_2\in H^{2m}(\Omega)$ satisfying respectively the transport equations (\ref{2.25}) and \eqref{2.26} and for all $h>0$ small enough, we can construct a solution to $\LL_{A,B,q}(x,D)u=0$, $x\in\Omega$, of the form
\begin{equation}\label{2.27}
u(x,h)=e^{\frac{\varphi}{h}}(\ama_1(x)+h \ama_2(x)+r(x,h)),\quad x\in\Omega,
\end{equation}
where $\varphi(x)=\psi(x)+i\tild{\psi}(x)=x\cdot\varrho$. Moreover the correction term $r(\cdot,h)\in H^{2m}(\Omega)$, satisfying 
\begin{equation}\label{2.28}
\norm{r(\cdot,h)}_{\Hs^{2m}(\Omega)}\les h^2 (\|\ama_1\|_{H^{2m}(\Omega)}+\| \ama_2\|_{H^{2m}(\Omega)}).
\end{equation}
\end{lemma}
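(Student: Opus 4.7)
The strategy is to substitute the ansatz into $\LL_{A,B,q}(x,D)u = 0$ and determine the correction $r$ via the solvability statement of Lemma \ref{P2.5}. Multiplying the identity $\LL_{A,B,q}u=0$ by $h^{2m}e^{-\varphi/h}$ and setting $\tild{\LL}_{\varphi} := e^{-\varphi/h}h^{2m}\LL_{A,B,q}(x,D)\,e^{\varphi/h}$, this reduces to
\begin{equation*}
\tild{\LL}_{\varphi}(x,hD)\,r \;=\; -\tild{\LL}_{\varphi}(x,hD)(\ama_1 + h\ama_2) \;=:\; \g(\cdot,h) \quad\text{in }\Omega.
\end{equation*}
Since the Carleman estimate \eqref{2.20} and the solvability result of Lemma \ref{P2.5} are standardly known to extend from the real weight $\psi$ to the complex weight $\varphi = \psi + i\tild{\psi}$ with $\varrho\cdot\varrho = 0$ (cf.\ \cite{KSU}), one has the existence of $r \in H^{2m}(\Omega)$ satisfying $h^m\|r\|_{\Hs^{2m}(\Omega)} \les \|\g(\cdot,h)\|_{L^2(\Omega)}$ once $\g \in L^2(\Omega)$. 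The proof is thus reduced to proving
\begin{equation*}
\|\g(\cdot,h)\|_{L^2(\Omega)} \;\les\; h^{m+2}\bigl(\|\ama_1\|_{H^{2m}(\Omega)} + \|\ama_2\|_{H^{2m}(\Omega)}\bigr),
\end{equation*}
which combined with the above yields \eqref{2.28}.

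The core calculation is the expansion of $\tild{\LL}_\varphi$ in powers of $h$. Using $\nabla\varphi = \varrho$, $\varrho\cdot\varrho = 0$ and $\Delta\varphi = 0$, a direct computation gives $e^{-\varphi/h}(-h^2\Delta)e^{\varphi/h} = -(2h\varrho\cdot\nabla + h^2\Delta)$, and the binomial theorem (applicable because $\Delta$ and $\varrho\cdot\nabla$ commute) yields
\begin{equation*}
e^{-\varphi/h}(-h^2\Delta)^m e^{\varphi/h} \;=\; (-1)^m(2h\varrho\cdot\nabla)^m + (-1)^m m\,(2h\varrho\cdot\nabla)^{m-1}(h^2\Delta) + h^{m+2}\mathcal{R}_0,
\end{equation*}
where $\mathcal{R}_0$ is a constant-coefficient differential operator of order $\leq 2m$ uniformly bounded in $h$. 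In parallel, $e^{-\varphi/h}(hD_j)e^{\varphi/h} = hD_j - i\varrho_j$ together with the symmetry of $A$ shows that the $A$-term in $\tild{\LL}_\varphi$ contributes $-h^{2m-2}A\varrho\cdot\varrho - 2h^{2m-1}A\varrho\cdot\nabla + O(h^{2m})$, the $B$-term contributes $-ih^{2m-1}B\cdot\varrho + O(h^{2m})$, and the potential gives $h^{2m}q$.

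Assembling these expansions and applying $\tild{\LL}_\varphi$ to $\ama_1 + h\ama_2$, the transport equations \eqref{2.25}--\eqref{2.26} are seen to be tailor-made to annihilate every term of order $h^m$ and $h^{m+1}$. Indeed, the $h^m$ coefficient equals $(-1)^m 2^m(\varrho\cdot\nabla)^m\ama_1$, supplemented when $m=2$ (since then $2m-2 = m$) by $-A\varrho\cdot\varrho\,\ama_1$; this vanishes by \eqref{2.25} and, for $m=2$, by the isotropy hypothesis \eqref{cond_A}, which forces $A\varrho\cdot\varrho = a\,\varrho\cdot\varrho = 0$. The $h^{m+1}$ coefficient combines $(-1)^m 2^m(\varrho\cdot\nabla)^m\ama_2$ and $(-1)^m m\,2^{m-1}\Delta(\varrho\cdot\nabla)^{m-1}\ama_1$ from the principal part, and, in the cases $m=3$ (where $2m-2 = m+1$ activates $-A\varrho\cdot\varrho\,\ama_1$) and $m=2$ (where additionally $2m-1 = m+1$ activates $-2A\varrho\cdot\nabla\,\ama_1$ and $-iB\cdot\varrho\,\ama_1$), with further lower-order pieces; these three scenarios produce exactly the three right-hand sides of \eqref{2.26}. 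What remains in $\g(\cdot,h)$ is a sum of terms of order at least $h^{m+2}$ whose coefficients are polynomial expressions in at most two derivatives of $A \in W^{\min(2m,6),\infty}$, at most one derivative of $B \in W^{4,\infty}$ and $q \in L^\infty$, applied to at most $2m$ derivatives of $\ama_1, \ama_2$; the Leibniz rule and the boundedness of $\Omega$ then yield the $L^2$-bound on $\g$ stated above, and an application of Lemma \ref{P2.5} furnishes the desired $r$. The main delicate point is the case-by-case bookkeeping at order $h^{m+1}$, together with the fact that the isotropy hypothesis \eqref{cond_A} is indispensable already at order $h^m$ in the case $m=2$ to remove the obstruction $A\varrho\cdot\varrho\,\ama_1$.
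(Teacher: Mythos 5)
Your proposal follows the same overall strategy as the paper: substitute the ansatz, expand the conjugated operator in powers of $h$ using the binomial theorem (valid since $\Delta$ and $\varrho\cdot\nabla$ commute and $\varrho\cdot\varrho=\Delta\varphi=0$), check that the transport equations \eqref{2.25}--\eqref{2.26} kill the coefficients of $h^m$ and $h^{m+1}$ in all three regimes $m=2$, $m=3$, $m>3$ (with isotropy \eqref{cond_A} essential at order $h^m$ when $m=2$), obtain an $L^2$ bound of size $h^{m+2}$ on the right-hand side, and invoke the solvability result of Lemma \ref{P2.5} to produce $r$. The case-by-case bookkeeping at orders $h^m,h^{m+1}$ is correct and matches \eqref{2.26}.

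The one place where you wave your hands is the step ``the Carleman estimate \eqref{2.20} and the solvability result of Lemma \ref{P2.5} are standardly known to extend from the real weight $\psi$ to the complex weight $\varphi=\psi+i\widetilde{\psi}$.'' This is true, but it is not what the paper uses, and you should justify it. Lemma \ref{P2.5} is stated and proved only for the real limiting weight $\psi$. The paper does not re-prove anything for the complex weight; instead it works with the modified source $\g_h=-e^{i\widetilde{\psi}/h}\LL_{-\varphi}(x,hD)(\alpha_1+h\alpha_2)$, solves $\LL_{-\psi}(x,hD)\widetilde{r}=\g_h$ by Lemma \ref{P2.5} (with real weight $-\psi$, which is again a limiting weight), and then sets $r=e^{-i\widetilde{\psi}/h}\widetilde{r}$; since $e^{\varphi/h}r=e^{\psi/h}\widetilde{r}$, this $r$ produces the required exact solution, and the unimodular factor $e^{-i\widetilde{\psi}/h}$ is harmless for the semiclassical $\Hs^{2m}$ bound because each $hD$ applied to it produces an $O(1)$ factor. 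You should either supply this conjugation trick explicitly or carry out the analogous verification that $P_\varphi=e^{i\widetilde{\psi}/h}P_\psi e^{-i\widetilde{\psi}/h}$ transfers the Carleman estimate; asserting the extension by citation is not enough, since the lemma you invoke is stated for real $\psi$ only. Also, your parenthetical remark that the error coefficients involve ``at most two derivatives of $A$ and one derivative of $B$'' is unnecessary (and slightly inaccurate: in the expansion of $\LL_{-\varphi}(\alpha_1+h\alpha_2)$ the coefficients $A,B,q$ appear undifferentiated); derivatives of $A,B$ enter only through the construction of $\alpha_2$, which is assumed given here, so that point is moot for this lemma.
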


\begin{proof}
Let $\ama_1\in C^\infty(\overline{\Omega})$ and $\ama_2\in H^{2m}(\Omega)$ satisfying respectively the transport equations (\ref{2.25}) and \eqref{2.26}. We denote $\g_h\in L^2(\Omega)$ given by
\begin{equation}\label{2.29}
\g_h(x)=-e^{i\frac{\tild{\psi}}{h}}\LL_{-\varphi}(x,hD)\bigr(\ama_1(x)+h \ama_2(x)\bigr),\quad x\in\Omega,
\end{equation}
where $\LL_{-\varphi}(x,hD)$ denote the conjugated operator
\begin{equation}\label{2.30}
\LL_{-\varphi}(x,hD)=e^{-\frac{\varphi}{h}}h^{2m}\LL_{A,B,q}(x,D)e^{\frac{\varphi}{h}}.
\end{equation}
By a simple computation, we get
\begin{align*}
\hspace{-2 cm}-e^{-i\frac{\tild{\psi}}{h}}\g_h&= \Big((-h^2\Delta-2h\varrho\cdot\nabla)^m+h^{2m-2}A(hD-i\varrho)\cdot (hD-i\varrho)+h^{2m-1}B\cdot(hD-i\varrho)\cr
&\qquad +h^{2m}q\Big)(\ama_1+h \ama_2)\cr
&=\Big((-1)^m \sum_{k=0}^{m}\binom{m}{k}h^{2m-k}(\Delta)^{m-k}(2\varrho\cdot\nabla)^{k} + h^{2m} ( AD\cdot D+B\cdot D+q )\cr
&\quad +h^{2m-1} (-2i A  \varrho \cdot D-i \varrho \cdot B )-h^{2m-2} (A \varrho \cdot \varrho ) \Big)(\ama_1+h \ama_2)\cr
&= h^{2m}\bigr( \LL_{A, B, q}(x, D) (\ama_1+h \ama_2)-2i A \varrho \cdot D\ama_2-i \varrho \cdot B\ama_2\bigr)\cr
&\quad+(-1)^m \sum_{k=1}^{m}\binom{m}{k}h^{2m-k}(\Delta)^{m-k}(2\varrho\cdot\nabla)^{k}(\ama_1+h \ama_2)\cr&\quad+h^{2m-1} \bigr(-2i A  \varrho \cdot D\ama_1-i \varrho \cdot B \ama_1-A \varrho \cdot \varrho \ama_2\bigr)-h^{2m-2} A \varrho \cdot \varrho \ama_1.
\end{align*}
Since $\ama_1$ satisfying the transport equation \eqref{2.25}, we get
\begin{align}\label{2.31}
-e^{-i\frac{\tild{\psi}}{h}}\g_h&=  h^{2m}\bigr( \LL_{A, B, q}(x, D) (\ama_1+h \ama_2)-2i A  \varrho \cdot D\ama_2-i \varrho \cdot B\ama_2\bigr)\cr
&\quad+(-1)^m \sum_{k=1}^{m-1}\binom{m}{k}h^{2m-k}(\Delta)^{m-k}(2\varrho\cdot\nabla)^{k}(\ama_1+h \ama_2)+ h^{m+1}(-1)^m  (2\varrho\cdot\nabla)^{m}\ama_2\cr
&\quad+h^{2m-1} \bigr(-2i A \varrho \cdot D\ama_1-i \varrho \cdot B \ama_1-A \varrho \cdot \varrho \ama_2\bigr)-h^{2m-2} A \varrho \cdot \varrho \ama_1.
\end{align}
Now we need to show that the terms in right hand side of \eqref{2.31} are terms of order $\mathcal{O}(h^{m+2})$, $m \geq 2$, to do this we distinguish two cases. First, if $m=2$, in that case using the fact that $A$ is isotropic we find out from \eqref{2.31} that
\begin{align*}
-e^{-i\frac{\tild{\psi}}{h}}\g_h&=  h^{4}\bigr( \LL_{A, B, q}(x, D) (\ama_1+h \ama_2)-2i A  \varrho \cdot D\ama_2-i \varrho \cdot B\ama_2+4\Delta (\varrho\cdot\nabla) \ama_2\bigr)\cr&\quad+ h^{3}\bigr( (2\varrho\cdot\nabla)^{2}\ama_2-2i A  \varrho \cdot D\ama_1-i \varrho \cdot B \ama_1+4\Delta (\varrho\cdot\nabla) \ama_1\bigr).
\end{align*}
In order to get $\|e^{-i\frac{\tild{\psi}}{h}}\g_h\|_{L^2(\Omega)}=\mathcal{O}(h^{4})$, we choose the amplitude $\ama_2 \in H^4(\Omega)$ such that 
\begin{align*}
   (2\varrho\cdot\nabla)^{2}\ama_2=2i A  \varrho \cdot D\ama_1+i \varrho \cdot B \ama_1-4\Delta (\varrho\cdot\nabla) \ama_1.
\end{align*}
Having chosen $\ama_2$ in this way, we obtain for $m=2$, the
following equation
\begin{align}\label{2.32}
-e^{-i\frac{\tild{\psi}}{h}}\g_h=  h^{2m}\bigr( \LL_{A, B, q}(x, D) (\ama_1+h \ama_2)-2i A  \varrho \cdot D\ama_2-i B\cdot \varrho\ama_2+4\Delta (\varrho\cdot\nabla) \ama_2\bigr).
\end{align}
Now, if $m>2$ we get from \eqref{2.31}
\begin{align}\label{2.33}
-e^{-i\frac{\tild{\psi}}{h}}\g_h&=  h^{2m}\bigr( \LL_{A, B, q}(x, D) (\ama_1+h \ama_2)-2i A  \varrho \cdot D\ama_2-i \varrho \cdot B\ama_2\bigr)\cr
&\quad+(-1)^m \sum_{k=1}^{m-2}\binom{m}{k}h^{2m-k}(\Delta)^{m-k}(2\varrho\cdot\nabla)^{k}(\ama_1+h \ama_2)\cr&\quad+
h^{m+2}(m(-1)^m \Delta (2\varrho\cdot\nabla)^{m-1}\ama_2)\cr
&\quad+h^{m+1}\bigr((-1)^m  (2\varrho\cdot\nabla)^{m}\ama_2+(-1)^m m \Delta  (2\varrho\cdot\nabla)^{m-1}\ama_1\bigr)\cr&\quad+h^{2m-1} \bigr(-2i A \varrho \cdot D\ama_1-i \varrho \cdot B \ama_1-A \varrho \cdot \varrho \ama_2\bigr)-h^{2m-2} A \varrho \cdot \varrho \ama_1.
\end{align}
We observe that for $1 \leq k \leq m-2$, we have $h^{2m-k} \leq h^{m+2}$, then we deduce
\begin{align*}
 -e^{-i\frac{\tild{\psi}}{h}}\g_h&=  \mathcal{O}(h^{m+2}) +h^{m+1}\bigr((-1)^m  (2\varrho\cdot\nabla)^{m}\ama_2+(-1)^m m \Delta  (2\varrho\cdot\nabla)^{m-1}\ama_1\bigr)-h^{2m-2} A \varrho \cdot \varrho \ama_1.   
\end{align*}
Notice that $h^{2m-2}=h^{m+1}$ for $m=3$, and $h^{2m-2} \leq h^{m+2}$ for $m>3$. Then in order to obtain $ -e^{-i\frac{\tild{\psi}}{h}}\g_h=  \mathcal{O}(h^{m+2})$, we distinguish again two cases : first, if $m=3$, we get from \eqref{2.33}
\begin{align*}
 -e^{-i\frac{\tild{\psi}}{h}}\g_h&= h^{6}\bigr( \LL_{A, B, q}(x, D) (\ama_1+h \ama_2)-2i A  \varrho \cdot D\ama_2-i \varrho \cdot B\ama_2\bigr)\cr&\quad+h^{5}\bigr(- 3 (\Delta)^{2}(2\varrho\cdot\nabla)(\ama_1+h \ama_2)
-3 \Delta (2\varrho\cdot\nabla)^{2}\ama_2-2i A  \varrho \cdot D\ama_1-i \varrho \cdot B \ama_1-A \varrho \cdot \varrho \ama_2\bigr)\cr&\quad+h^{4}\bigr(- (2\varrho\cdot\nabla)^{3}\ama_2-3  \Delta  (2\varrho\cdot\nabla)^{2}\ama_1- A \varrho \cdot \varrho \ama_1\bigr).  
\end{align*}
We choose the amplitude $\ama_2 \in H^6(\Omega)$ satisfies
\begin{align*}
   (2\varrho\cdot\nabla)^{3}\ama_2=-3  \Delta  (2\varrho\cdot\nabla)^{2}\ama_1- A \varrho \cdot \varrho \ama_1,\quad \textrm{in}~\Omega. 
\end{align*}
Hence, we can immediately conclude that
\begin{align}\label{2.34}
  -e^{-i\frac{\tild{\psi}}{h}}\g_h&= h^{6}\bigr( \LL_{A, B, q}(x, D) (\ama_1+h \ama_2)-2i A  \varrho \cdot D\ama_2-i \varrho \cdot B\ama_2\bigr)\cr&\quad+h^{5}\bigr(- 3 (\Delta)^{2}(2\varrho\cdot\nabla)(\ama_1+h \ama_2)
-3 \Delta (2\varrho\cdot\nabla)^{2}\ama_2\cr &\quad-2i A  \varrho \cdot D\ama_1-i \varrho \cdot B \ama_1-A\varrho \cdot \varrho \ama_2\bigr). 
\end{align}
Finally, if $m>3$, we choose the amplitude $\ama_2 \in H^{2m}(\Omega)$ satisfies
\begin{align*}
 (2\varrho\cdot\nabla)^{m}\ama_2=- m \Delta  (2\varrho\cdot\nabla)^{m-1}\ama_1,\quad \textrm{in}~\Omega, \end{align*}
which allows us to obtain from \eqref{2.33} 
 \begin{align}\label{2.35}
 -e^{-i\frac{\tild{\psi}}{h}}\g_h&=  h^{2m}\bigr( \LL_{A, B, q}(x, D) (\ama_1+h \ama_2)-2i  \varrho \cdot D\ama_2-i  \cdot \varrho\ama_2\bigr)\cr &\quad+h^{m+2}(m(-1)^m \Delta (2\varrho\cdot\nabla)^{m-1}\ama_2)+(-1)^m \sum_{k=1}^{m-2}\binom{m}{k}h^{2m-k}(\Delta)^{m-k}(2\varrho\cdot\nabla)^{k}(\ama_1+h \ama_2)\cr&\quad+
h^{2m-1} \bigr(-2i A  \varrho \cdot D\ama_1-i \varrho \cdot B \ama_1-A \varrho \cdot \varrho \ama_2\bigr)-h^{2m-2} A \varrho \cdot \varrho \ama_1.
\end{align}
We deduce from \eqref{2.32}, \eqref{2.34} and \eqref{2.35} that
\begin{equation}\label{2.36}
\norm{e^{-i\frac{\widetilde{\psi}}{h}}\g_h}_{L^2(\Omega)}\les h^{m+2}(\|\ama_1\|_{H^{2m}(\Omega)}+\| \ama_2\|_{H^{2m}(\Omega)}),\quad m \geq 2.
\end{equation}
So, by Lemma \ref{P2.5}, for $h>0$ sufficiently small, we can conclude that there exists $\tild{r}(\cdot,h)\in H^{2m}(\Omega)$ solving
$$
\LL_{-\psi}(x,hD)\tild{r}(x,h)=\g_h(x)\quad \textrm{in} \,\,\Omega,
$$ 
such that 
\begin{equation*}
h^m\norm{\tild{r}(\cdot,h)}_{\Hs^{2m}(\Omega)}\les \norm{\g_h}_{L^2(\Omega)}\leq  h^{m+2}(\|\ama_1\|_{H^{2m}(\Omega)}+\| \ama_2\|_{H^{2m}(\Omega)}).
\end{equation*}
We denote $r(\cdot,h)=e^{-i\frac{\tild{\psi}}{h}}\tild{r}(\cdot,h)$. Therefore
\begin{equation*}
\LL_{A,B,q}(x,D)(e^{\frac{\varphi}{h}}(\ama_1(x)+h \ama_2(x)+  r(x,h))=0,
\end{equation*}
with $r(\cdot, h) \in H^{2m}(\Omega)$, such that
\begin{equation}
\norm{r(\cdot,h)}_{\Hs^{2m}(\Omega)}\les  h^2 (\|\ama_1\|_{H^{2m}(\Omega)}+\| \ama_2\|_{H^{2m}(\Omega)}).
\end{equation}
The proof is complete.
\end{proof}

Similarly, let consider the following transport equations
\begin{equation}\label{2.38}
T^*_{\varrho}(x,D)\ama_1^*:=(\overline{\varrho}\cdot\nabla)^m \ama_1^*=0,\quad \textrm{in}\,\,\Omega,
\end{equation}
and 
\begin{equation}\label{2.39}
(2\overline{\varrho}\cdot\nabla)^m\ama_2^*=\left\{\begin{matrix}
\bigr(4\Delta(\overline{\varrho} \cdot \nabla)-2{A^*}\overline{\varrho} \cdot \nabla -i {B^*} \cdot \varrho\bigr) \ama_1^*,&\quad \textrm{if}~~m=2, \\
\bigr(-12\Delta(\overline{\varrho} \cdot \nabla)^{2}-{A^*}\overline{\varrho} \cdot \overline{\varrho}\bigr)\ama_1^*,&\quad \textrm{if}~~m=3,\\
-m\Delta(-2\overline{\varrho} \cdot \nabla)^{m-1}\ama_1^*,&\quad \textrm{if}~~ m>3.
\end{matrix}\right.
\end{equation}
where $\varrho=\omega+i\tilde{\omega}$ and $A^*$, $B^*$ are given by \eqref{2.19}. We can similarly prove the existence of C.G.O solutions to the adjoint operator $\LL^*_{A,B,q}(x,D)$.

\begin{lemma}\label{L2.7}
Let $A\in \mathcal{A}_{\sigma_1}(M)$, $B \in \mathcal{B}_{\sigma_2}(M)$ and $q \in \mathcal{Q}_{}(M)$ and $\varphi$ as above. Then for any $\ama_1^*\in C^\infty(\overline{\Omega})$ and $\ama_2^*\in H^{2m}(\Omega)$ satisfying respectively the transport equations (\ref{2.38}) and \eqref{2.39} and all $h>0$ small enough, we can construct a solution to $\LL^*_{A,B,q}(x,D)u=0$, $x\in\Omega$, of the form
\begin{equation}\label{2.40}
u^*(x,h)=e^{-\frac{\overline{\varphi}}{h}}(\ama_1^*(x)+h\ama_2^*(x)+r^*(x,h)),\quad x\in\Omega,
\end{equation}
where $\varphi(x)=\psi(x)+i\widetilde{\psi}(x)=x\cdot\varrho$. Moreover the correction term $r^*(\cdot,h)\in H^{2m}(\Omega)$, satisfying 
\begin{equation}\label{2.41}
\norm{r^*(\cdot,h)}_{\Hs^{2m}(\Omega)}\les h^2(\|\ama_1^*\|_{H^{2m}(\Omega)}+\| \ama_2^*\|_{H^{2m}(\Omega)}).
\end{equation}
\end{lemma}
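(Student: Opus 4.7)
The plan is to mirror the argument of Lemma \ref{L2.6} with the obvious modifications for the adjoint operator. Since $\LL^*_{A,B,q}(x,D)$ has the same form as $\LL_{A,B,q}(x,D)$ but with $\psi$ replaced by $-\psi$ and coefficients $A, B, q$ replaced by $A^*, B^*, q^*$ given in \eqref{2.19} — which lie in the admissible classes $\mathcal{A}_{\sigma_1}(M'), \mathcal{B}_{\sigma_2}(M'), \mathcal{Q}(M')$ for some possibly enlarged constant $M'$ — the Carleman estimate of Lemma \ref{P2.4} applies, and a duality/Hahn--Banach argument identical to the one proving Lemma \ref{P2.5} yields the solvability statement
\[
\LL^*_{-\psi}(x,hD)\tild{r} = \g,\qquad h^m\|\tild{r}\|_{\Hs^{2m}(\Omega)}\les \|\g\|_{L^2(\Omega)},
\]
for any $\g\in L^2(\Omega)$, where $\LL^*_{-\psi}(x,hD)=e^{-\psi/h}h^{2m}\LL^*_{A,B,q}(x,D)e^{\psi/h}$.

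With this tool in hand, I would introduce the conjugated adjoint operator associated with the complex weight $-\overline{\varphi}$,
\[
\LL^*_{-\overline{\varphi}}(x,hD) = e^{\overline{\varphi}/h}\,h^{2m}\LL^*_{A,B,q}(x,D)\,e^{-\overline{\varphi}/h},
\]
and set
\[
\g_h^*(x) = -e^{-i\widetilde{\psi}/h}\,\LL^*_{-\overline{\varphi}}(x,hD)\bigl(\ama_1^*(x)+h\ama_2^*(x)\bigr).
\]
Expanding as in \eqref{2.31}, noting that the substitution $\varphi\mapsto -\overline{\varphi}$ replaces $\varrho$ by $-\overline{\varrho}$ in every differential factor, the leading $h^m$ contribution vanishes thanks to the transport equation \eqref{2.38} for $\ama_1^*$. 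The next contribution, of order $h^{m+1}$, is killed by the choice \eqref{2.39} of $\ama_2^*$ — the sign flip in its right-hand side compared with \eqref{2.26} exactly accounts for the opposite sign of the weight and the passage to starred coefficients. A case distinction $m=2$, $m=3$, $m>3$, identical in structure to the one in the proof of Lemma \ref{L2.6}, leads to the estimate
\[
\|e^{-i\widetilde{\psi}/h}\g_h^*\|_{L^2(\Omega)} \les h^{m+2}\bigl(\|\ama_1^*\|_{H^{2m}(\Omega)}+\|\ama_2^*\|_{H^{2m}(\Omega)}\bigr).
\]

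Applying the solvability statement above to the right-hand side $\g_h^*$ produces $\tild{r}^*(\cdot,h)\in H^{2m}(\Omega)$ with $\LL^*_{-\psi}(x,hD)\tild{r}^* = \g_h^*$ and $\|\tild{r}^*\|_{\Hs^{2m}(\Omega)}\les h^2(\|\ama_1^*\|_{H^{2m}(\Omega)}+\|\ama_2^*\|_{H^{2m}(\Omega)})$. Setting $r^*(x,h) = e^{-i\widetilde{\psi}/h}\tild{r}^*(x,h)$ gives a function of the same Sobolev size such that $u^*(x,h) = e^{-\overline{\varphi}/h}(\ama_1^* + h\ama_2^* + r^*)$ satisfies $\LL^*_{A,B,q}(x,D)u^* = 0$, delivering both \eqref{2.40} and the bound \eqref{2.41}.

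The main obstacle is purely organizational: carefully tracking the conjugations and sign flips produced by replacing $\varphi$ with $-\overline{\varphi}$, and verifying that the transport equations \eqref{2.39} carry precisely the signs that cancel the leading-order residual in the adjoint conjugation. No new analytical ingredient is needed beyond the Carleman estimate of Lemma \ref{P2.4}, the multiplier estimate of Lemma \ref{L2.2} applied to $A^*, B^*$, and the bookkeeping already exhibited in the direct case.
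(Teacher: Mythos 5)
Your plan is exactly the one the paper intends (the paper gives no separate proof of Lemma \ref{L2.7} and simply says the adjoint case is proved ``similarly''), and the ingredients you invoke — the Carleman estimate of Lemma \ref{P2.4}, the Hahn--Banach solvability argument, the case split $m=2,3,>3$, and the transport equations \eqref{2.38}--\eqref{2.39} — are precisely the right ones. One small slip to watch: in your definition of $\g_h^*$ the conjugating phase should be $e^{+i\widetilde{\psi}/h}$, exactly as in \eqref{2.29}, not $e^{-i\widetilde{\psi}/h}$; since $-\overline{\varphi}=-\psi+i\widetilde{\psi}$ has the \emph{same} imaginary part as $\varphi$, the relation $\LL^*_{-\overline{\varphi}}=e^{-i\widetilde{\psi}/h}\LL^*_{-\psi}e^{i\widetilde{\psi}/h}$ mirrors the direct case with no sign change, so with your choice of $r^*=e^{-i\widetilde{\psi}/h}\widetilde{r}^*$ the final cancellation $\LL^*_{-\overline{\varphi}}(\ama_1^*+h\ama_2^*+r^*)=0$ only closes if $\g_h^*=-e^{i\widetilde{\psi}/h}\LL^*_{-\overline{\varphi}}(\ama_1^*+h\ama_2^*)$. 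This does not affect any of the $L^2$ or semiclassical Sobolev bounds (the phase is unimodular with $O(1)$ semiclassical derivatives), so it is purely a bookkeeping correction.
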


\subsection{Solvability of the transport equation}\label{Sect3}

We will now establish the existence of complex amplitude $\ama_2$ satisfying \eqref{2.26}. We will follow the same approach used in \cite{IS}. Let $P(D)$ be a differential operator with constant coefficients \begin{align}
    P(D)=\sum_{\abs{\ama}\leq m} a_\ama D^\ama,\quad D=-i\nabla,\quad a_\ama \in \C.
\end{align}
We associate to the operator $P(D)$ its full symbol $p(\xi)$ given by 
\begin{align}
    p(\xi)=\sum_{\abs{\ama}\leq m} a_\ama \xi^\ama,\quad \xi \in \R^n.
\end{align}
Moreover, we set 
\begin{align}\label{p_tilde}
    \widetilde{p}(\xi)=\bigr(\sum_{\abs{\ama}\leq m} \abs{\partial_\xi^\ama p(\xi)}^2 \bigr)^{\frac{1}{2}},\quad \xi \in \R^n.
\end{align}
Let us first recall the following result due to \cite{IS}, where it is obtained as a consequence of the general theory of \cite{Hor}.

\begin{lemma}\label{L2.8}
Let $P(D) \neq 0$, be a differential operator with constant coefficients. Then for all $k \in \mathbb{N}$, there exists a linear operator $E \in \mathcal{B}(H^k(\Omega))$ such that $P(D)(E(f))=f$, for all $f \in H^k(\Omega)$. Moreover, there exists $C >0$ such that 
\begin{align}\label{2.45}
    \|E(f)\|_{H^k(\Omega)} \leq C \bigr(\underset{\xi \in \R^n}{\operatorname{sup}}  \frac{1}{\widetilde{p}(\xi)} \bigr)  \|f\|_{H^k(\Omega)}, \quad \forall f \in H^k(\Omega).
\end{align}
Here $C$ depends only on the order of $P(D)$, $n$ and $\Omega$.
\end{lemma}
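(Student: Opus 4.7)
The plan is to construct $E$ as the restriction to $\Omega$ of convolution with a suitable fundamental solution of $P(D)$ on $\R^n$, applied to a compactly supported extension of $f$. The weighted bound \eqref{2.45} will then be inherited directly from the $L^2$ mapping properties of that fundamental solution.

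The key input I would invoke is H\"ormander's classical construction (in the form used in \cite{Hor} and adapted in \cite{IS}): for any non-zero polynomial $p(\xi)$ there exists a tempered distribution $F_P \in \mathcal{S}'(\R^n)$ satisfying $P(D)F_P = \delta$ on $\R^n$, together with the weighted convolution bound
$$
\|F_P \ast g\|_{L^2(K')} \leq C_{K,K'}\bigl(\sup_{\xi \in \R^n} \tfrac{1}{\widetilde p(\xi)}\bigr)\|g\|_{L^2(\R^n)},
$$
valid for every $g \in L^2(\R^n)$ supported in a fixed bounded set $K \subset \R^n$, where $C_{K,K'}$ depends only on $K$, $K'$ and the order of $P$. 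This is established via a shifted Fourier inversion contour chosen so as to avoid the real zeros of $p$, and it is precisely that shift which generates the factor $\sup 1/\widetilde p$.

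With $F_P$ in hand, I would fix a continuous extension operator $\mathcal{E}\colon H^k(\Omega)\to H^k(\R^n)$ whose image is supported in a fixed compact neighborhood $K$ of $\overline\Omega$, with operator norm depending only on $\Omega$ and $k$, and then set
$$
E(f) := (F_P \ast \mathcal{E} f)\big|_{\Omega}, \qquad f \in H^k(\Omega).
$$
The identity $P(D) E(f) = f$ on $\Omega$ is immediate from $P(D)(F_P \ast \mathcal{E}f) = \mathcal{E}f$ on $\R^n$ together with $\mathcal{E}f|_\Omega = f$. For the norm estimate, I would use that differentiation commutes with convolution: for every multi-index $\alpha$ with $|\alpha|\le k$,
$$
D^\alpha (F_P \ast \mathcal{E}f) = F_P \ast D^\alpha \mathcal{E}f,
$$
and apply the weighted $L^2$ bound with $g = D^\alpha \mathcal{E} f$ and $K' = \overline\Omega$. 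Summing over $|\alpha| \leq k$ produces \eqref{2.45} with a constant depending only on $\Omega$, $k$, $n$ and the order of $P$.

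The main technical obstacle is the weighted $L^2$ estimate for convolution with $F_P$ itself, which is the heart of H\"ormander's argument; since we are entitled to invoke it as a black box from \cite{Hor, IS}, the remainder of the proof---extension, convolution, commuting derivatives through the convolution, and restriction to $\Omega$---is a routine bookkeeping exercise.
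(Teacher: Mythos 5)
The paper does not actually prove Lemma~\ref{L2.8}; it cites it to \cite{IS}, noting it follows from the general theory of \cite{Hor}. So there is no internal proof to compare against. Your reconstruction is the natural one and is essentially correct: take a continuous, compactly supported extension $\mathcal{E}\colon H^k(\Omega)\to H^k(\R^n)$, convolve with H\"ormander's fundamental solution $F_P$, restrict to $\Omega$, and use commutation of $D^\alpha$ with convolution to propagate the $L^2$ bound to $H^k$. The scheme yields $P(D)E(f)=f$ on $\Omega$ directly, and the norm estimate reduces to the localized mapping property of $F_P\,\ast$ on compactly supported $L^2$ data, which is exactly the content of H\"ormander's theorem on regular fundamental solutions.

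One small imprecision worth fixing in your write-up concerns where the factor $\sup_\xi 1/\widetilde p(\xi)$ actually appears. The contour-shift construction does not itself produce that factor; rather, it produces a fundamental solution $F_P$ for which $F_P\,\ast$ maps compactly supported $L^2$ into the local H\"ormander space $B_{2,\widetilde P}^{\mathrm{loc}}$ with a constant $C_{K,K'}$ depending only on $n$, the order of $P$, and the supports $K,K'$; this is the sharp weighted estimate $\|\widetilde p\cdot\widehat{F_P\ast g}\|_{L^2}\les C_{K,K'}\|g\|_{L^2}$ localized to $K'$. The factor $\sup_\xi 1/\widetilde p(\xi)$ then arises only at the final step, when one lowers the $B_{2,\widetilde P}$-norm to the plain $L^2$-norm by discarding the weight. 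Stating the black box in the $B_{2,\widetilde P}$ form and then applying this embedding would make the dependence on $\widetilde p$ transparent and would match the way it is actually used in Lemma~\ref{L2.9}, where the point is to bound $\sup 1/\widetilde p_\varrho$ uniformly in $\varrho$. With that adjustment your argument is a faithful account of the standard derivation.
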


For $\varrho=\omega + i \widetilde{\omega} \in \mathbb{S}^{n-1}+i \mathbb{S}^{n-1}$ as above, we consider the differential operator with constant coefficients
\begin{align}
   T_{\varrho}(D)=(\varrho\cdot\nabla)^m , \quad \textrm{for}~~m \geq 2.
\end{align}
As a consequence of the above Lemma we have the following result.

\begin{lemma}\label{L2.9}
The operator $ T_{\varrho}(D)$ admits a bounded inverse $ E_{\varrho}$ from $H^k(\Omega)$ to $H^k(\Omega)$, such that, if $f \in H^k(\Omega)$ and $v=E_{\varrho}f  \in H^k(\Omega)$, we have 
\begin{align}\label{2.47}
    \|v\|_{H^k(\Omega)} \leq C \|f\|_{H^k(\Omega)}.
\end{align}
Here $C >0$ is a positive constant depends only on $n$, $m$ and $\Omega$.
\end{lemma}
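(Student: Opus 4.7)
The plan is to apply Lemma \ref{L2.8} to the constant-coefficient operator $T_\varrho(D)=(\varrho\cdot\nabla)^m=i^m(\varrho\cdot D)^m$ and to verify that the quantity $\sup_{\xi\in\R^n}\widetilde{p}(\xi)^{-1}$ entering the estimate \eqref{2.45} can be bounded by a constant depending only on $m$ (in particular, independent of the choice of $\varrho=\omega+i\widetilde{\omega}$). Since $|\varrho|^2=|\omega|^2+|\widetilde{\omega}|^2=2\neq 0$, the operator $T_\varrho(D)$ is nontrivial, so Lemma \ref{L2.8} indeed applies and produces a bounded right inverse $E_\varrho$ acting on $H^k(\Omega)$.

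The main step is the lower bound on $\widetilde{p}(\xi)$. The full symbol of $T_\varrho(D)$ is $p(\xi)=i^m(\varrho\cdot\xi)^m$, and a direct computation using the chain rule gives, for $|\alpha|\le m$,
\begin{equation*}
\partial_\xi^\alpha p(\xi)=i^m\frac{m!}{(m-|\alpha|)!}\,\varrho^\alpha(\varrho\cdot\xi)^{m-|\alpha|}.
\end{equation*}
Keeping only the contribution of the top-order multi-indices $|\alpha|=m$, which is independent of $\xi$, we obtain
\begin{equation*}
\widetilde{p}(\xi)^2\;\ge\;\sum_{|\alpha|=m}|\partial_\xi^\alpha p(\xi)|^2\;=\;(m!)^2\sum_{|\alpha|=m}|\varrho^\alpha|^2.
\end{equation*}
By the multinomial expansion,
\begin{equation*}
2^m=(|\varrho_1|^2+\cdots+|\varrho_n|^2)^m=\sum_{|\alpha|=m}\frac{m!}{\alpha!}|\varrho^\alpha|^2\;\le\;m!\sum_{|\alpha|=m}|\varrho^\alpha|^2,
\end{equation*}
whence $\sum_{|\alpha|=m}|\varrho^\alpha|^2\ge 2^m/m!$ and consequently $\widetilde{p}(\xi)\ge 2^{m/2}\sqrt{m!}$ uniformly in $\xi\in\R^n$ and in the admissible $\varrho$.

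Combining this uniform lower bound with Lemma \ref{L2.8} yields a linear operator $E_\varrho\in\mathcal{B}(H^k(\Omega))$ satisfying $T_\varrho(D)E_\varrho f=f$ and
\begin{equation*}
\|E_\varrho f\|_{H^k(\Omega)}\;\le\;C\Bigl(\sup_{\xi\in\R^n}\widetilde{p}(\xi)^{-1}\Bigr)\|f\|_{H^k(\Omega)}\;\le\;\frac{C}{2^{m/2}\sqrt{m!}}\,\|f\|_{H^k(\Omega)},
\end{equation*}
where the constant $C$ from Lemma \ref{L2.8} depends only on the order $m$, on $n$ and on $\Omega$. This is precisely the claimed estimate \eqref{2.47}. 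There is no serious obstacle in this argument; the only point to watch is that the bound on $\widetilde{p}$ must be made $\varrho$-uniform, which is exactly why we discard the $|\alpha|<m$ derivatives (whose sizes depend on $\xi$) and retain only the top-order terms, whose norm is controlled from below purely in terms of $|\varrho|^2=2$.
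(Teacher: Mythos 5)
Your proof is correct and follows essentially the same route as the paper: apply Lemma \ref{L2.8}, compute the symbol $p_\varrho(\xi)=i^m(\varrho\cdot\xi)^m$, and obtain a $\xi$- and $\varrho$-uniform lower bound on $\widetilde{p}(\xi)$ from the ($\xi$-independent) top-order derivatives, using $|\varrho|^2=2$. The only difference is cosmetic: the paper keeps just the pure derivatives $\alpha=m e_j$ and invokes the power-mean inequality, whereas you keep all $|\alpha|=m$ terms and invoke the multinomial theorem, yielding a slightly different but equally valid constant.
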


\begin{proof}
From Lemma \ref{L2.8}, there exists a linear operator $E_{\varrho}\in \mathcal{B}(H^k(\Omega))$ such that
$$T_{\varrho}\circ E_{\varrho}f=f, \quad\textrm{for any } f \in H^k(\Omega).$$
Moreover, since the symbol of the differential operator $T_{\varrho}(D)$ is given by $$p_{\varrho}(\xi)=i^m(\varrho \cdot \xi)^m,\quad \xi \in \R^n.$$
So we have, by \eqref{p_tilde} and H\"older's inequality 
$$(\widetilde{p}_{\varrho}(\xi))^2 \geq   \sum_{j=1}^n \abs{m!  \varrho_j^m}^2 \geq \frac{(m!)^2}{n^{m-1}}\bigr(\sum_{j=1}^n \abs{\varrho_j }^2\bigr)^m \geq \frac{2^m (m!)^2}{n^{m-1}}.$$
Then, from \eqref{2.45} we get 
$$ \|E_{\varrho}f\|_{H^k(\Omega)} \leq C \|f\|_{H^k(\Omega)}.$$
This completes the proof of the Lemma.
\end{proof}

We are in the position now to establish the existence of amplitude $\ama_2$ satisfying \eqref{2.26}.

\begin{lemma}\label{L2.10}
Let $A\in \mathcal{A}_{\sigma_1}(M)$, $B \in \mathcal{B}_{\sigma_2}(M)$ and $\ama_1 \in C^\infty(\overline{\Omega})$ satisfies \eqref{2.25}. Then, there exists $\ama_2 \in H^{2m}(\Omega)$ solving the transport equation
\eqref{2.26}.
Moreover, there exists $C>0$ such that we have
\begin{align}\label{2.50}
    \|\ama_2\|_{H^{2m}(\Omega)} \leq C\big( \|\ama_1\|_{H^{2m}(\Omega)}+\|(\varrho \cdot \nabla)^{m-1} \ama_1\|_{H^{2m+2}(\Omega)}\big).
\end{align}
Here $C$ is a positive constant depends only on $n$, $m$ and $\Omega$.
\end{lemma}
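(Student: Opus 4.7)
The plan is to reduce \eqref{2.26} to an equation of the form $(\varrho\cdot\nabla)^m \ama_2 = g$ and invoke Lemma \ref{L2.9} with $k=2m$. Observe that $(2\varrho\cdot\nabla)^m = 2^m (\varrho\cdot\nabla)^m = 2^m T_\varrho(D)$, so denoting by $F$ the right-hand side of \eqref{2.26}, the equation becomes
\begin{equation*}
T_\varrho(D)\ama_2 = 2^{-m} F.
\end{equation*}
If I can show that $F \in H^{2m}(\Omega)$ with a norm controlled by $\|\ama_1\|_{H^{2m}(\Omega)} + \|(\varrho\cdot\nabla)^{m-1}\ama_1\|_{H^{2m+2}(\Omega)}$, then Lemma \ref{L2.9} produces $\ama_2 = 2^{-m} E_\varrho F \in H^{2m}(\Omega)$ together with the estimate \eqref{2.47}, which upon combining with the bound on $\|F\|_{H^{2m}(\Omega)}$ delivers \eqref{2.50}.

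The remaining work is thus the verification of the Sobolev bound for $F$, which I split according to the three cases of \eqref{2.26}. When $m=2$, the dominant term $-4\Delta(\varrho\cdot\nabla)\ama_1$ satisfies $\|\Delta(\varrho\cdot\nabla)\ama_1\|_{H^{4}(\Omega)} \les \|(\varrho\cdot\nabla)\ama_1\|_{H^{6}(\Omega)}$, since the Laplacian loses two derivatives, while the lower-order contributions $2A\varrho\cdot\nabla\ama_1$ and $i\varrho\cdot B\,\ama_1$ are handled by the algebra-type multiplication estimates in $H^{4}(\Omega)$, using $A\in W^{4,\infty}(\Omega)$ (guaranteed by $\mathcal{A}_{\sigma_1}(M)$ since $\min(2m,6)=4$ for $m=2$) and $B\in W^{4,\infty}(\Omega)$, together with $|\varrho|=\sqrt{2}$. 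The case $m=3$ is analogous: $\|\Delta(\varrho\cdot\nabla)^2\ama_1\|_{H^{6}(\Omega)} \les \|(\varrho\cdot\nabla)^2\ama_1\|_{H^{8}(\Omega)}$, and $\|A\varrho\cdot\varrho\,\ama_1\|_{H^{6}(\Omega)} \les \|A\|_{W^{6,\infty}(\Omega)}\|\ama_1\|_{H^{6}(\Omega)}$, with the $W^{6,\infty}$-regularity of $A$ coming from $\min(2m,6)=6$. For $m>3$, only the principal term $-m\Delta(2\varrho\cdot\nabla)^{m-1}\ama_1$ appears and contributes $\les \|(\varrho\cdot\nabla)^{m-1}\ama_1\|_{H^{2m+2}(\Omega)}$.

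Putting these three cases together and absorbing $\|A\|_{W^{\min(2m,6),\infty}(\Omega)}$ and $\|B\|_{W^{4,\infty}(\Omega)}$ into the universal constant (both are bounded in terms of $M$ via the admissibility classes $\mathcal{A}_{\sigma_1}(M)$ and $\mathcal{B}_{\sigma_2}(M)$ by Sobolev embedding, since $\sigma_1>n/2+3$ and $\sigma_2>n/2+1$), one obtains
\begin{equation*}
\|F\|_{H^{2m}(\Omega)} \les \|\ama_1\|_{H^{2m}(\Omega)} + \|(\varrho\cdot\nabla)^{m-1}\ama_1\|_{H^{2m+2}(\Omega)},
\end{equation*}
which, combined with \eqref{2.47}, yields \eqref{2.50}. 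The argument is essentially routine once Lemma \ref{L2.9} is in hand; the only mildly delicate point is bookkeeping the regularity of $A$ and $B$ to ensure that the product estimates go through in $H^{2m}(\Omega)$, but this is exactly why the admissible classes were chosen with the stated regularities. The uniformity of the constant in $\varrho=\omega+i\widetilde{\omega}$ with $\omega,\widetilde{\omega}\in\s^{n-1}$, $\omega\cdot\widetilde{\omega}=0$, is inherited from Lemma \ref{L2.9}, whose proof showed the symbol lower bound $\widetilde{p}_\varrho(\xi)\geq (2^m (m!)^2/n^{m-1})^{1/2}$ independently of $\varrho$.
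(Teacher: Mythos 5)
Your reduction is the right one, and the cases $m=3$ and $m>3$ are handled correctly: for $m>3$ the single term $-m\Delta(2\varrho\cdot\nabla)^{m-1}\ama_1$ is bounded in $H^{2m}$ by $\|(\varrho\cdot\nabla)^{m-1}\ama_1\|_{H^{2m+2}}$, and for $m=3$ the extra term $A\varrho\cdot\varrho\,\ama_1$ is a zeroth-order multiplication by a $W^{6,\infty}$ function, hence bounded by $\|\ama_1\|_{H^{2m}}$.

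However, your treatment of $m=2$ has a genuine gap. The term $2A\varrho\cdot\nabla\ama_1=2\sum_{j,k}A_{jk}\varrho_j\partial_k\ama_1$ is a first-order operator in the \emph{full} gradient of $\ama_1$, so the $W^{4,\infty}\times H^4\to H^4$ product estimate gives
\begin{equation*}
\|A\varrho\cdot\nabla\ama_1\|_{H^4(\Omega)}\les \|A\|_{W^{4,\infty}}\|\ama_1\|_{H^5(\Omega)},
\end{equation*}
and $\|\ama_1\|_{H^5}$ is \emph{not} controlled by $\|\ama_1\|_{H^4}+\|(\varrho\cdot\nabla)\ama_1\|_{H^6}$ (take $\ama_1$ depending only on a coordinate orthogonal to $\operatorname{span}\{\omega,\widetilde\omega\}$: then $(\varrho\cdot\nabla)\ama_1\equiv0$ and the right-hand side cannot dominate $\|\ama_1\|_{H^5}$). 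The missing ingredient is the isotropy assumption \eqref{cond_A}, which is built into $\mathcal{A}_{\sigma_1}(M)$ precisely for $m=2$. With $A=a\,\operatorname{id}$ one has $A\varrho\cdot\nabla\ama_1=a\,(\varrho\cdot\nabla)\ama_1$, and then
\begin{equation*}
\|a\,(\varrho\cdot\nabla)\ama_1\|_{H^4(\Omega)}\les \|a\|_{W^{4,\infty}}\|(\varrho\cdot\nabla)\ama_1\|_{H^4(\Omega)}\leq \|a\|_{W^{4,\infty}}\|(\varrho\cdot\nabla)\ama_1\|_{H^6(\Omega)},
\end{equation*}
which is exactly the $m=2$ instance of $\|(\varrho\cdot\nabla)^{m-1}\ama_1\|_{H^{2m+2}}$. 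You should state this reduction explicitly; without it the bound \eqref{2.50} does not follow from the product estimate you invoke. The remaining pieces of your argument (the symbol lower bound being uniform in $\varrho$, the absorption of $\|A\|$, $\|B\|$ into the constant via Sobolev embedding) are fine.
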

\begin{remark}
Notice from \eqref{2.26} and Lemma \eqref{L2.9}, that the regularity of the complex
amplitude $\ama_2$
depends on the regularity of the coefficients $A$ and $B$. This justifies our extra regularity assumption on the coefficients. We hope that the regularity condition imposed on $2-$tensor field can be weakened from $W^{\operatorname{min}(2m, 6), \infty}(\Omega)$ to $W^{2, \infty}(\Omega)$, if we replace $A$ in \eqref{2.26} by its regularization $A_h$ given by 
$$A_h=\chi_h \ast A,$$ 
where $\chi_h(x)=h^{-n\sigma } \chi(h^{-\sigma }x)$ is the usual mollifier and $\sigma \in (0,1/2)$.
\end{remark}

In a similar manner, we can prove that

\begin{lemma}\label{L2.11}
Let $A\in \mathcal{A}_{\sigma_1}(M)$, $B \in \mathcal{B}_{\sigma_2}(M)$ and $\ama_1^* \in C^\infty(\overline{\Omega})$ satisfies \eqref{2.38}. Then, there exists $\ama_2^* \in H^{2m}(\Omega)$ solving the transport equation \eqref{2.39}. Moreover, there exists $C>0$ such that we have
\begin{align}\label{2.53}
    \|\ama_2^*\|_{H^{2m}(\Omega)} \leq C \big(\|\ama_1^*\|_{H^{2m}(\Omega)}+\|(\overline{\varrho} \cdot \nabla)^{m-1} \ama_1^*\|_{H^{2m+2}(\Omega)}\big).
\end{align}
Here $C$ is a positive constant depends only on $n$, $m$ and $\Omega$.
\end{lemma}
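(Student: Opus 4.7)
The plan is to mirror the proof of Lemma~\ref{L2.10} with $\varrho$ replaced by its conjugate $\overline{\varrho}$ throughout. Note that $\overline{\varrho}=\omega-i\widetilde{\omega}$ still satisfies $|\omega|=|\widetilde{\omega}|=1$ and $\omega\cdot(-\widetilde{\omega})=0$, so $\overline{\varrho}$ lies in $\mathbb{S}^{n-1}+i\mathbb{S}^{n-1}$ in the same sense as $\varrho$. Consequently the constant-coefficient differential operator $T_{\overline{\varrho}}(D)=(\overline{\varrho}\cdot\nabla)^m$ has full symbol $p_{\overline{\varrho}}(\xi)=i^m(\overline{\varrho}\cdot\xi)^m$, and the computation in Lemma~\ref{L2.9} applies verbatim to yield the uniform lower bound
\begin{equation*}
\widetilde{p}_{\overline{\varrho}}(\xi)^2\ \geq\ \frac{2^m(m!)^2}{n^{m-1}},\qquad \xi\in\R^n.
\end{equation*}
Therefore Lemma~\ref{L2.8} supplies a bounded right inverse $E_{\overline{\varrho}}\in\mathcal{B}(H^{2m}(\Omega))$ with $T_{\overline{\varrho}}\circ E_{\overline{\varrho}}=\mathrm{Id}$ and operator norm depending only on $n$, $m$ and $\Omega$.

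Next, I would denote by $F^*$ the right-hand side of the transport equation~\eqref{2.39} (the expression depending on the regime $m=2$, $m=3$, or $m>3$) and check that $F^*\in H^{2m}(\Omega)$. Since $A^{*}$ and $B^{*}$ are obtained from $A\in\mathcal{A}_{\sigma_1}(M)$ and $B\in\mathcal{B}_{\sigma_2}(M)$ via the formulas~\eqref{2.19}, the admissibility hypotheses give $A^{*}\in W^{\min(2m,6),\infty}(\Omega,\C^{n^{2}})$ and $B^{*}\in W^{4,\infty}(\Omega,\C^{n})$. Since $\ama_1^{*}\in C^\infty(\overline{\Omega})$, the algebra/product estimates in Sobolev spaces (or direct differentiation followed by Leibniz) yield
\begin{equation*}
\|F^{*}\|_{H^{2m}(\Omega)}\ \les\ \|\ama_1^{*}\|_{H^{2m}(\Omega)}+\|(\overline{\varrho}\cdot\nabla)^{m-1}\ama_1^{*}\|_{H^{2m+2}(\Omega)},
\end{equation*}
where the first summand collects all zero/first order contributions involving $A^{*}$ and $B^{*}$, and the second summand accounts for the $\Delta(\overline{\varrho}\cdot\nabla)^{m-1}\ama_1^{*}$ contribution that costs two extra derivatives. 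I then set
\begin{equation*}
\ama_2^{*}\ :=\ \frac{1}{2^{m}}\,E_{\overline{\varrho}}(F^{*})\ \in\ H^{2m}(\Omega),
\end{equation*}
so that $(2\overline{\varrho}\cdot\nabla)^{m}\ama_2^{*}=F^{*}$, which is precisely~\eqref{2.39}. Applying~\eqref{2.47} to $E_{\overline{\varrho}}(F^{*})$ gives the asserted bound~\eqref{2.53} with a constant depending only on $n$, $m$ and $\Omega$.

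The only nontrivial point is the Sobolev bookkeeping in the estimate of $\|F^{*}\|_{H^{2m}(\Omega)}$, in particular checking that the regularity imposed on $A$ and $B$ through $\mathcal{A}_{\sigma_1}(M)$ and $\mathcal{B}_{\sigma_2}(M)$ is precisely what is needed to absorb $A^{*}(\overline{\varrho}\cdot\nabla)\ama_1^{*}$ (for $m=2$), $A^{*}\ama_1^{*}$ (for $m=3$), and the $\Delta(-2\overline{\varrho}\cdot\nabla)^{m-1}\ama_1^{*}$ term (for $m>3$) into the right-hand side of~\eqref{2.53}. Everything else is a direct transcription of the argument proving Lemma~\ref{L2.10}, with the single change $\varrho\leftrightarrow\overline{\varrho}$, and the remark following Lemma~\ref{L2.10} on the regularity cost of $A$ and $B$ applies identically here.
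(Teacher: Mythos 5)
Your proof is correct and follows precisely the route the paper intends: the paper states Lemma~\ref{L2.11} only with the remark "In a similar manner, we can prove that," referring to Lemma~\ref{L2.10}, and your argument is exactly that transcription---apply Lemma~\ref{L2.9} (via Lemma~\ref{L2.8}) to $T_{\overline{\varrho}}$, bound the right-hand side $F^*$ of~\eqref{2.39} in $H^{2m}$, and set $\ama_2^*=2^{-m}E_{\overline{\varrho}}(F^*)$. One small caveat in the Sobolev bookkeeping that you flag but understate: because $B^*$ involves one derivative of $A$, its $W^{k,\infty}$ regularity for $m=2$ is one order lower than that of $B$, so verifying $\|F^*\|_{H^{2m}}$ in that regime requires a little more care than for Lemma~\ref{L2.10}; this is, however, an issue the paper itself leaves implicit.
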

For the sake of simplicity, the norm in right hand side of \eqref{2.50} and \eqref{2.53} is denoted by
\begin{align}
    N_{2m, \varrho}(\ama):=\|\ama\|_{H^{2m}(\Omega)}+\|(\varrho \cdot \nabla)^{m-1} \ama\|_{H^{2m+2}(\Omega)}.
\end{align}

\section{Integral estimates and Hodge decomposition}
In this section, we will use the properties of the Dirichlet-to-Neumann map to prove an integral estimate, which relates the difference of coefficients to the difference of the corresponding Dirichlet-to-Neumann map. This integral estimate will be our starting point in proving the stability estimates for the inverse problem under consideration. In the second part of this section we give a specific Hodge decomposition of a symmetric tensor $A$ and a vector field $B$. \smallskip \\
As in the first section we consider a priori constant $M>0$, $\sigma_1 > \frac{n}{2}+3$ and $\sigma_2 > \frac{n}{2}+1$. Also, we consider for $j=1,2$, $A^{(j)} \in \mathcal{A}_{\sigma_1}(M)\cap \mathcal{E}^\prime(\Omega)$, $B^{(j)} \in \mathcal{B}_{\sigma_2}(M)\cap \mathcal{E}^\prime(\Omega)$ and $q^{(j)}\in \mathcal{Q}_{}(M)$ pair of admissible coefficients. We denote
\begin{equation}\label{3.1}
A=A^{(2)}-A^{(1)},\qquad B=B^{(2)}-B^{(1)}\quad\text{and  }~~~~ q=q^{(2)}-q^{(1)}.
\end{equation}
We extend $A$, $B$ and $q$ by zero outside
$\Omega$. In the sequel of the next, we denote by $A$, $B$ and $q$ these extensions and by $\LL_j(x,D)$, for $j=1,2,$ the operator corresponding to the perturbation $(A^{(j)},B^{(j)},q^{(j)})$, that is
\begin{equation}
\LL_{j}(x,D)=\LL_{A^{(j)},B^{(j)},q^{(j)}}(x,D),\quad j=1,2.
\end{equation}
For notational convenience, the Dirichlet to Neumann map is denoted by
\begin{equation}
\Lambda^{(j)}=\Lambda_{A^{(j)},B^{(j)},q^{(j)}},\quad j=1,2.
\end{equation}
The key idea in the stability result is to use complex geometric optics solutions $u$ to $\LL_2(x, D)u=0$ in $\Omega$ and $u^*$ to $\LL_1^*(x, D)u^*=0$ in $\Omega$ and plug them in an integral identity. \smallskip\\
For $A^{(1)}$, $B^{(1)}$ and $q^{(1)}$ as above. In what follows,
we shall need the generalization of Green's formula given in \cite{AS},
\begin{align}\label{3.4}
&\left(\LL_{1}(x,D)w , v \right)-\left (w , \LL^*_{1}(x,D)v \right)\cr&=\sum_{\ell=1}^{m}\int_\Gamma \left [ \bigr((-\Delta )^{m-\ell}w\bigr)\bigr(\overline{\partial_\nu (-\Delta)^{\ell-1}v}\bigr) - \bigr(\partial_\nu(-\Delta )^{m-\ell}w\bigr)\bigr(\overline{(-\Delta)^{\ell-1}v}\bigr) \right ]ds_x\cr
&:= \left<\gamma^t w , \tilde{\gamma}v\right> -\bigr< \tilde{\gamma}w, \gamma^t v \bigr>, 
\end{align}
where  $\gamma^t w=((-\Delta)^{m-1}  w, \ldots,(-\Delta) w, w)$. Here $\left(\cdot , \cdot \right)$ and $\left<\cdot , \cdot
\right>$ denote the $L^2-$ inner product respectively on $\Omega$ 
and $\Gamma$, and $ds_x$ stands for the Euclidean surface measure on $\Gamma$.
\smallskip

We start by the following integral identity.
\begin{lemma}\label{L3.1}
Let $A$, $B$ and $q$ given by \eqref{3.1}. For given $f=(f_0, \ldots, f_{m-1}), f^*=(f_0^*, \ldots, f_{m-1}^*)\in \mathcal{H}^{m,\frac{1}{2}}(\Gamma)$, let $u,\,u^*\in H^{2m}(\Omega)$ solutions respectively, to
\begin{equation}
\left\{\begin{array}{ll}
\LL_{2}(x,D)u=0 & \textrm{in}\,\Omega,\cr
\gamma u=f & \textrm{on}\,\Gamma,
\end{array}
\right.
;\qquad 
\left\{\begin{array}{ll}
\LL^*_{1}(x,D)u^*=0 & \textrm{in}\,\Omega,\cr
\gamma u^*=f^* & \textrm{on}\,\Gamma.
\end{array}
\right.
\end{equation}
Then the following identity holds true
\begin{equation}
\int_\Omega (A(x)D \cdot D u+B(x)\cdot Du+q(x)u)\overline{u^*}dx=\bigr<(\Lambda^{(2)}-\Lambda^{(1)})f, (f^*)^t\bigr>,\label{3.6}
\end{equation}
where $(f^*)^t=(f_{m-1}^*, \ldots, f_0^*)$.
\end{lemma}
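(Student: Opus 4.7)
The plan is to introduce an auxiliary solution and reduce everything to the generalized Green's formula (3.4). First I would invoke Lemma \ref{L0.1} to produce $\tilde{u}\in H^{2m}(\Omega)$ solving the Dirichlet problem
\[
\LL_{1}(x,D)\tilde{u}=0 \ \text{in}\ \Omega,\qquad \gamma\tilde{u}=f \ \text{on}\ \Gamma,
\]
and set $w=u-\tilde{u}\in H^{2m}(\Omega)$. By construction $\gamma w=0$, and hence also $\gamma^t w=0$ on $\Gamma$ (since $\gamma^t w$ is just $\gamma w$ with components permuted). Moreover, by definition of $A$, $B$, $q$ we have the operator identity $\LL_{2}-\LL_{1}=A\,D\cdot D+B\cdot D+q$, so that
\[
\LL_{1}w=\LL_{1}u-\LL_{1}\tilde{u}=\LL_{1}u=\LL_{1}u-\LL_{2}u=-\bigl(A\,D\cdot D u+B\cdot D u+q u\bigr) \quad \text{in }\Omega.
\]

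Next I would apply the generalized Green's identity \eqref{3.4} to the pair $(w,u^*)$. Since $\LL_{1}^{*}(x,D)u^{*}=0$ and $\gamma^t w|_\Gamma=0$, the identity collapses to
\[
\bigl(\LL_{1}(x,D)w,u^{*}\bigr)_{L^2(\Omega)}=-\bigl\langle\tilde{\gamma}w,\gamma^t u^{*}\bigr\rangle.
\]
Substituting the expression for $\LL_{1}w$ on the left-hand side yields
\[
-\int_\Omega \bigl(A\,D\cdot D u+B\cdot D u+q u\bigr)\overline{u^{*}}\,dx=-\bigl\langle\tilde{\gamma}w,\gamma^t u^{*}\bigr\rangle.
\]

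It remains to identify the two boundary factors with the objects appearing in \eqref{3.6}. For the first factor, by definition of the Dirichlet-to-Neumann maps $\Lambda^{(1)}$ and $\Lambda^{(2)}$,
\[
\tilde{\gamma}w=\tilde{\gamma}u-\tilde{\gamma}\tilde{u}=\Lambda^{(2)}f-\Lambda^{(1)}f=(\Lambda^{(2)}-\Lambda^{(1)})f.
\]
For the second factor, since $\gamma u^{*}=f^{*}=(f_0^{*},\ldots,f_{m-1}^{*})$, the reversed trace operator gives precisely $\gamma^t u^{*}=(f_{m-1}^{*},\ldots,f_0^{*})=(f^{*})^t$. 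Putting these together cancels the minus signs and delivers the desired identity
\[
\int_\Omega \bigl(A(x)D\cdot D u+B(x)\cdot D u+q(x)u\bigr)\overline{u^{*}}\,dx=\bigl\langle(\Lambda^{(2)}-\Lambda^{(1)})f,(f^{*})^t\bigr\rangle.
\]

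The only genuine point of care is bookkeeping: one must check that $\gamma w=0$ implies vanishing of \emph{every} component appearing in $\gamma^t w$ so that only the $\tilde{\gamma}w$ boundary term survives, and that the ordering convention of $\gamma^t$ in \eqref{3.4} matches the reversed tuple $(f^{*})^t$ in \eqref{3.6}. No obstacle beyond this; existence of the auxiliary $\tilde{u}$ and the required regularity of $w$ are guaranteed by Lemma \ref{L0.1}, and the Green's identity \eqref{3.4} is directly applicable since $w,u^{*}\in H^{2m}(\Omega)$.
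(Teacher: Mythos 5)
Your proof is correct and follows essentially the same route as the paper: both introduce an auxiliary solution of $\LL_1$ with Dirichlet data $f$, form the difference $w$ (you take $w=u-\tilde u$, the paper takes $w=u_1-u$, a sign convention that washes out), and apply the generalized Green's identity \eqref{3.4}, using $\gamma^t w=0$ and $\LL_1^*u^*=0$ to collapse it to the stated integral identity. The bookkeeping of $\tilde\gamma w=(\Lambda^{(2)}-\Lambda^{(1)})f$ and $\gamma^t u^*=(f^*)^t$ is handled exactly as in the paper.
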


\begin{proof}
Let $u$ and $u^*$ as in the lemma. Let $u_1\in H^{2m}(\Omega)$ be the unique solution of
\begin{equation}\label{3.7}
\left\{\begin{array}{ll}
\LL_{1}(x,D)u_1=0 & \textrm{in}\,\Omega,\\
\gamma u_1=f & \textrm{on}\,\Gamma.
\end{array}
\right.
\end{equation}
Putting $w=u_1-u$, then $w$ solves 
\begin{equation}\label{3.8}
\left\{\begin{array}{ll}
\LL_{1}(x,D)w=A(x)D\cdot D u+B(x)\cdot Du+q(x)u & \textrm{in}\,\Omega,\cr
\gamma w=0 & \textrm{on}\,\Gamma.
\end{array}
\right.
\end{equation}
Next, we multiply the first equation in (\ref{3.8}) by $\overline{u^*}$ and we then apply the Green's formula \eqref{3.4} to get
\begin{equation}
\int_\Omega (A(x)D\cdot D u+B(x)\cdot Du+q(x)u)\overline{u^*}dx=\int_\Gamma(\Lambda^{(2)}-\Lambda^{(1)})f\cdot\overline{(f^*)^t}ds_x.
\end{equation}
This completes the proof.
\end{proof}

For given $\omega,\tild{\omega}\in\s^{n-1}$ with $\omega\cdot\tild{\omega}=0$, let $\varrho=\omega+i\tild{\omega}$ and let $\varphi(x)=x\cdot\varrho$. For $h>0$ sufficiently small, Lemmas \ref{L2.6} and \ref{L2.7} guarantee the existence of C.G.O- solutions $u\in H^{2m}(\Omega)$ verifying $\LL_2(x,D)u=0$ in $\Omega$ and $u^*\in H^{2m}(\Omega)$ verifying $\LL_1^*(x,D)u^*=0$ in $\Omega$ and such that
\begin{align}
u(x,h)&=e^{\frac{\varphi}{h}}(\ama_1(x)+h\ama_2(x)+r(x,h)),\label{3.10}\\
u^*(x,h)&=e^{-\frac{\overline{\varphi}}{h}}(\ama_1^*(x)+h \ama_2^*(x)+r^*(x,h)),\label{3.11}
\end{align}
where $r(\cdot,h)$ and $r^*(\cdot,h)$ satisfy
\begin{align}
\norm{r(\cdot,h)}_{\Hs^{2m}(\Omega)}&\les h^2N_{2m,\varrho}(\ama_1),\label{3.12} \\ \norm{r^*(\cdot,h)}_{\Hs^{2m}(\Omega)}&\les h^2N_{2m,\overline{\varrho }}(\ama_1^*),\label{3.12'}
\end{align}
and where $\ama_1(\cdot,\varrho)$ and $\ama_1^*(\cdot,\varrho)$ belong to $C^\infty(\overline{\Omega})$ and satisfying the following transport equations respectively
\begin{align}
& (\varrho\cdot\nabla)^m \ama_1 =0\quad \mathrm{in}\,\,\Omega, \label{3.13}\\
& (\overline{\varrho}\cdot\nabla)^m\ama_1^* =0\quad \mathrm{in}\,\,\Omega.\label{3.14}
\end{align}

 As a consequence of Lemma \ref{L3.1} and the C.G.O- solutions constructed as above, we have the following integral identity.

\begin{lemma}\label{L3.2}
Let $A$, $B$ and $q$ given by \eqref{3.1}. For all $\ama_1,\,\ama_1^*\in C^\infty(\overline{\Omega})$ satisfy \eqref{3.13} and \eqref{3.14} respectively, the following identity holds true
\begin{equation}
\int_\Omega \bigr(h^{-1} A \varrho \cdot \varrho \ama_1+2 A \varrho \cdot \nabla \ama_1 + i (\varrho \cdot B) \ama_1\bigr)\overline{\ama_1^*}dx=\mathcal{I}(h),\label{3.15}
\end{equation}
where
\begin{align}\label{3.16}
 |  \mathcal{I}(h)| 
  &\les (e^{C/h} \|\Lambda^{(1)}-\Lambda^{(2)}\|+h+\|A \varrho \cdot \varrho\|_{L^\infty})N_{2m, \varrho}(\ama_1)N_{2m, \overline{\varrho }}(\ama_1^*).
\end{align}
Here $C$ is a positive constant independent of $h$.
\end{lemma}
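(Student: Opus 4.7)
The plan is to substitute the complex geometric optics solutions from \eqref{3.10} and \eqref{3.11} into the orthogonality identity \eqref{3.6} of Lemma \ref{L3.1} and then extract the leading-order terms in $h$. First I would carry out the conjugation computation: since $D=-i\nabla$, one has $D_k(e^{\varphi/h}g) = e^{\varphi/h}(D_k g - (i/h)\varrho_k g)$, and iterating this together with the symmetry of $A$ yields, after direct expansion,
\begin{align*}
AD\cdot Du + B\cdot Du + qu = e^{\varphi/h}\Bigl[-h^{-2}(A\varrho\cdot\varrho)w - h^{-1}\bigl(2iA\varrho\cdot Dw + i(B\cdot\varrho)w\bigr) + R_0(x)\Bigr],
\end{align*}
where $w := \ama_1 + h\ama_2 + r$ and $R_0 := AD\cdot Dw + B\cdot Dw + qw$. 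Since $\overline{u^*} = e^{-\varphi/h}\overline{w^*}$ with $w^* := \ama_1^* + h\ama_2^* + r^*$, the exponential factors cancel pointwise in the product $(AD\cdot Du + B\cdot Du + qu)\overline{u^*}$.

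Multiplying the identity of Lemma \ref{L3.1} by $-h$, integrating over $\Omega$, and using $2iA\varrho\cdot D\ama_1 = 2A\varrho\cdot\nabla\ama_1$, one arrives at
\begin{align*}
\int_\Omega\bigl(h^{-1}(A\varrho\cdot\varrho)w + 2A\varrho\cdot\nabla w + i(\varrho\cdot B)w\bigr)\overline{w^*}\,dx - h\int_\Omega R_0\overline{w^*}\,dx = -h\langle(\Lambda^{(2)}-\Lambda^{(1)})f,(f^*)^t\rangle.
\end{align*}
Expanding $w$ and $w^*$ and isolating the contribution of $\ama_1\overline{\ama_1^*}$ produces exactly the left-hand side of \eqref{3.15}, and moving every remaining term to the right-hand side defines $\mathcal{I}(h)$.

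The final step is the estimate \eqref{3.16}. For the boundary pairing, Lemma \ref{L0.1}, the uniform bound $|e^{\varphi/h}|\le e^{C/h}$ on $\overline{\Omega}$, the remainder bounds \eqref{3.12}--\eqref{3.12'}, and the amplitude estimates of Lemmas \ref{L2.10}--\ref{L2.11} together give $\|u\|_{H^{2m}(\Omega)}\les e^{C/h}N_{2m,\varrho}(\ama_1)$ and $\|u^*\|_{H^{2m}(\Omega)}\les e^{C/h}N_{2m,\overline{\varrho}}(\ama_1^*)$; after absorbing the prefactor $h$ into a slightly larger exponential this yields the $e^{C/h}\|\Lambda^{(1)}-\Lambda^{(2)}\|$ term. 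The only cross terms that do not already carry a factor of $h$ are $(A\varrho\cdot\varrho)\ama_1\overline{\ama_2^*}$ and $(A\varrho\cdot\varrho)\ama_2\overline{\ama_1^*}$, both produced by $h^{-1}(A\varrho\cdot\varrho)w\overline{w^*}$; each is bounded by $\|A\varrho\cdot\varrho\|_{L^\infty}N_{2m,\varrho}(\ama_1)N_{2m,\overline{\varrho}}(\ama_1^*)$, accounting for the $\|A\varrho\cdot\varrho\|_{L^\infty}$ summand. Every remaining cross term either carries an explicit factor of $h$ or contains at least one of $r, r^*$; in the latter case the semiclassical estimate $\|r\|_{\Hs^{2m}}\les h^2 N_{2m,\varrho}(\ama_1)$ combined with $\|D^k r\|_{L^2}\les h^{-k}\|r\|_{\Hs^k}$ converts the seemingly singular $h^{-1}$ prefactor into an $O(h)$ contribution, and the bulk term $h\int_\Omega R_0\overline{w^*}\,dx$ is likewise $O(h)$ using boundedness of $A, B, q$ in $L^\infty$. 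The main obstacle is precisely this bookkeeping: one must confirm that each singular $h^{-2}$ or $h^{-1}$ prefactor acts only on quantities with matching smallness in $h$, so that after multiplication by $-h$ the identity collapses to \eqref{3.15} with a remainder controlled by \eqref{3.16}.
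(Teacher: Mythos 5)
Your proposal is correct and follows the same strategy as the paper: substitute the C.G.O.\ solutions into the orthogonality identity of Lemma~\ref{L3.1}, conjugate through the exponentials, multiply by $h$ to normalize the most singular term, isolate the $\ama_1\overline{\ama_1^*}$ contribution, and bound the remainder by combining the boundary pairing estimate $\|u\|_{H^{2m}}\les e^{C/h}N_{2m,\varrho}(\ama_1)$ with the semiclassical remainder bounds \eqref{3.12}--\eqref{3.12'} and the amplitude estimates of Lemmas~\ref{L2.10}--\ref{L2.11}. Your identification of the cross terms $(A\varrho\cdot\varrho)\ama_1\overline{\ama_2^*}$ and $(A\varrho\cdot\varrho)\ama_2\overline{\ama_1^*}$ as the source of the $\|A\varrho\cdot\varrho\|_{L^\infty}$ summand, and of the semiclassical bookkeeping that converts the singular $h^{-1},h^{-2}$ prefactors into $O(h)$ contributions, matches the paper's treatment of $\mathcal{J}_1(h)$ and $\mathcal{J}_2(h)$ exactly.
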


\begin{proof}
Let $u$ be a solution to $\LL_2(x, D)u=0 $, in $\Omega$, of the form (\ref{3.10}) and $u^*$ be a solution to $\LL_1^*(x, D)u^*=0 $, in $\Omega$, of the form (\ref{3.11}). Thus, we get
\begin{align}
hA D \cdot D u&=e^{\frac{\varphi}{h}}\big(-h^{-1}(A\varrho\cdot\varrho)-2iA\varrho\cdot D+AD\cdot(hD)\big)(\ama_1+h\ama_2+r).\label{3.17}
\end{align}
Then the first term in the left hand side of (\ref{3.6}) becomes
\begin{equation}
h\int_\Omega (A D \cdot D u)\overline{u^*}dx=-
\int_\Omega \bigr(h^{-1}(A\varrho\cdot\varrho)\ama_1 +2i A\varrho\cdot D\ama_1\bigr)\overline{\ama_1^*}dx +\mathcal{J}_1(h),\label{3.18}
\end{equation}
where $\mathcal{J}_1$ denotes the integral
\begin{align}
\mathcal{J}_1(h)&=-\int_\Omega \bigr(h^{-1}A\varrho\cdot\varrho(h\ama_2+r)+2i A \varrho \cdot D (h\ama_2+r)\bigr) \overline{(\ama_1^*+h\ama_2^*+r^*)}dx\cr&\quad-\int_\Omega \bigr( h^{-1}A\varrho\cdot\varrho \ama_1+2i A \varrho \cdot D \ama_1 \bigr) \overline{(h\ama_2^*+r^*)} dx\cr
&\quad +\int_\Omega 
(AD\cdot hD(\ama_1+h\ama_2+r))\overline{(\ama_1^*+h\ama_2^*+r^*)}dx.\label{3.19}
\end{align}
We deduce from the Cauchy-Schwarz inequality and from the inequalities (\ref{3.12}), \eqref{3.12'}, \eqref{2.50} and \eqref{2.53} that
\begin{equation}\label{3.20}
\abs{\mathcal{J}_1(h)}\les (\|A \varrho \cdot \varrho\|_{L^\infty}+h)N_{2m, \varrho}(\ama_1)N_{2m, \overline{\varrho }}(\ama_1^*).
\end{equation}
In the same way as above, we can prove that
\begin{align}
h\int_\Omega(B\cdot Du+qu)\overline{u^*}dx &=-i\int_\Omega B\cdot\varrho \ama_1 \overline{\ama_1^*}dx + \mathcal{J}_2(h),\label{3.21}
\end{align}
where
\begin{align}\label{3.22}
   \mathcal{J}_2(h)&=-i\int_\Omega B\cdot\varrho \bigr(\ama_1 \overline{(h\ama_2^*+r^*)} + (h\ama_2+r)\overline{(\ama_1^*+h\ama_2^*+r^*)}\bigr)dx \cr
  &\quad+ \int_\Omega \bigr((hB\cdot D+hq)(\ama_1+h\ama_2+r)\bigr)\overline{(\ama_1^*+h\ama_2^*+r^*)}dx,
\end{align}
and satisfies
\begin{equation}
\abs{\mathcal{J}_2(h)}\les h N_{2m, \varrho}(\ama_1)N_{2m, \overline{\varrho }}(\ama_1^*).\label{3.23}
\end{equation}
Collecting (\ref{3.18}) and (\ref{3.21}), we find from (\ref{3.6}) that
\begin{align*}
&\bigr|\int_\Omega \bigr( h^{-1}A\varrho\cdot\varrho \ama_1+2 A \varrho \cdot \nabla \ama_1 +iB\cdot\varrho \ama_1 \bigr) \overline{\ama_1^*}dx\bigr|\cr &\les h\norm{(\Lambda^{(1)}-\Lambda^{(2)})f}_{\mathcal{H}^{m,\frac{3}{2}}}\norm{f^*}_{\mathcal{H}^{m,\frac{1}{2}}}
+(\|A \varrho \cdot \varrho\|_{L^\infty}+h) N_{2m, \varrho}(\ama_1)N_{2m, \overline{\varrho }}(\ama_1^*)\cr 
&\les h\norm{\Lambda^{(1)}-\Lambda^{(2)}}\norm{u}_{H^{2m}(\Omega)}\norm{u^*}_{H^{2m}(\Omega)}+ (\|A \varrho \cdot \varrho\|_{L^\infty}+h)N_{2m, \varrho}(\ama_1)N_{2m, \overline{\varrho }}(\ama_1^*) \cr
&\les (e^{C/h}\norm{\Lambda^{(1)}-\Lambda^{(2)}}+ h+\|A \varrho \cdot \varrho\|_{L^\infty})N_{2m, \varrho}(\ama_1)N_{2m, \overline{\varrho }}(\ama_1^*).
\end{align*}
Hence, the proof of Lemma \ref{L3.2} is completed.
\end{proof}

As a consequence of the above Lemma, we have the following integral identities.

\begin{lemma}\label{L3.3}
Let $A, B$ and $q$ as above. Then, for all $\ama_1,\,\ama_1^*\in C^\infty(\overline{\Omega})$ satisfy \eqref{3.13} and \eqref{3.14} respectively, we have
\begin{equation}\label{3.26}
\int_\Omega A \varrho \cdot \varrho \ama_1\overline{\ama_1^*}dx=\mathcal{I}_2(h),\quad \textrm{if}~~m>2,
\end{equation}
and 
\begin{equation}\label{3.25}
\int_\Omega (2 A \varrho \cdot \nabla \ama_1 + i \varrho \cdot B \ama_1)\overline{\ama_1^*}dx=\mathcal{I}_1(h),\quad \textrm{if}~~m=2,
\end{equation}
where 
\begin{align}\label{3.27}
 |  \mathcal{I}_j(h)| \les (e^{C/h} \|\Lambda^{(1)}-\Lambda^{(2)}\|+h)N_{2m, \varrho}(\ama_1)N_{2m, \overline{\varrho }}(\ama_1^*),
\end{align}
for $j=1,2$ and $m \geq 2$. Here $C$ is a positive constant independent of $h$. 
\end{lemma}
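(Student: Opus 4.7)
The plan is to extract the two separate identities from the single integral identity \eqref{3.15}--\eqref{3.16} of Lemma \ref{L3.2}, by exploiting either an algebraic cancellation (for $m=2$) or a multiplication trick (for $m>2$).

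First I would consider the case $m=2$. The admissibility condition \eqref{cond_A} forces $A^{(j)}(x) = a^{(j)}(x)\operatorname{id}$ for $j=1,2$, so the difference is $A(x) = (a^{(2)}-a^{(1)})(x)\operatorname{id}$. Since $\varrho = \omega + i\widetilde{\omega}$ with $\omega, \widetilde{\omega} \in \s^{n-1}$ and $\omega \cdot \widetilde{\omega} = 0$, a direct computation gives
\[
\varrho \cdot \varrho = |\omega|^2 - |\widetilde{\omega}|^2 + 2i\,\omega \cdot \widetilde{\omega} = 0.
\]
Consequently $A\varrho\cdot\varrho \equiv 0$ pointwise, and both $\|A\varrho\cdot\varrho\|_{L^\infty}$ and the first term in the integrand of \eqref{3.15} vanish. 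The remaining identity, together with the bound \eqref{3.16}, immediately yields \eqref{3.25} and \eqref{3.27} with $j=1$.

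For the case $m>2$, I would multiply \eqref{3.15} through by $h$ and rearrange to obtain
\[
\int_\Omega A\varrho\cdot\varrho\, \ama_1 \overline{\ama_1^*}\, dx = h\,\mathcal{I}(h) - h\int_\Omega \bigl(2A\varrho\cdot\nabla \ama_1 + i(\varrho\cdot B)\ama_1\bigr)\overline{\ama_1^*}\, dx.
\]
The last integral is handled by Cauchy--Schwarz together with the Sobolev embeddings $H^{\sigma_1}(\Omega) \hookrightarrow L^\infty(\Omega)$ and $H^{\sigma_2}(\Omega) \hookrightarrow L^\infty(\Omega)$ (valid since $\sigma_1 > n/2 + 3$ and $\sigma_2 > n/2 + 1$), producing a bound of the form $h\, N_{2m,\varrho}(\ama_1)\, N_{2m,\overline{\varrho}}(\ama_1^*)$ up to a constant depending on $M$. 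For the $h\,\mathcal{I}(h)$ contribution, applying \eqref{3.16} and using $\|A\varrho\cdot\varrho\|_{L^\infty} \leq 2\|A\|_{L^\infty} \les M$ gives
\[
|h\,\mathcal{I}(h)| \les \bigl(h\, e^{C/h}\|\Lambda^{(1)}-\Lambda^{(2)}\| + h\bigr) N_{2m,\varrho}(\ama_1)\, N_{2m,\overline{\varrho}}(\ama_1^*).
\]
Absorbing the harmless factor $h$ into the exponent via $h\, e^{C/h} \leq e^{C'/h}$ for $h$ small delivers the required form \eqref{3.27} with $j=2$.

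I do not anticipate a serious obstacle, since the statement is essentially a corollary of Lemma \ref{L3.2}. The only substantive observation is the algebraic identity $\varrho \cdot \varrho = 0$, which trivializes the $m=2$ case thanks to the isotropy assumption \eqref{cond_A}, while for $m>2$ the $h^{-1}$-singular term (which would otherwise dominate) is exactly the one we want to isolate, and multiplication by $h$ turns it into the left-hand side of \eqref{3.26} while rendering the $\nabla\ama_1$ and $B$-terms subdominant.
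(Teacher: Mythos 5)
Your proof is correct and follows essentially the same route as the paper: for $m>2$, multiply the identity \eqref{3.15} through by $h$, observe that $\|A\varrho\cdot\varrho\|_{L^\infty}$ is bounded so the $h\,\mathcal{I}(h)$ term is absorbed into $(e^{C/h}\|\Lambda^{(1)}-\Lambda^{(2)}\|+h)$, and bound the remaining $h$-weighted integral by Cauchy--Schwarz; for $m=2$, use the isotropy assumption \eqref{cond_A} together with $\varrho\cdot\varrho=0$ to kill both the $h^{-1}$-term and the $\|A\varrho\cdot\varrho\|_{L^\infty}$ contribution. The only trivial nit is that $\|A\varrho\cdot\varrho\|_{L^\infty}\le 2\|A\|_{L^\infty}$ should really be $\les\|A\|_{L^\infty}$ with a dimension-dependent constant, but this is harmless since the estimate is only needed up to a multiplicative constant.
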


\begin{proof}
First, we prove \eqref{3.26}. Multiplying the equation \eqref{3.15} by $h$ we can obtain
\begin{align}\label{3.28}
    \int_\Omega  A \varrho \cdot \varrho \ama_1\overline{\ama_1^*}dx=-h\int_\Omega (2 A \varrho \cdot \nabla \ama_1 + i \varrho \cdot B \ama_1)\overline{\ama_1^*}dx+\mathcal{J}_2(h),
\end{align}
where $\mathcal{J}_2(h)$ satisfy \eqref{3.16}. Therefore by using the Cauchy-Schwarz inequality for the first term in the right hand side of \eqref{3.28}  we get our desired result \eqref{3.26}.
We move now to prove \eqref{3.27}. Since we assumed that $A$ is isotropic for $m=2$, then, $A \varrho \cdot \varrho=0$. Therefore, using Lemma \ref{L3.2} we get
\begin{align*}
 \int_\Omega (2 A \varrho \cdot \nabla \ama_1 + i \varrho \cdot B \ama_1)\overline{\ama_1^*}dx=\mathcal{J}_1(h),   
\end{align*}
where
\begin{align*}
    |  \mathcal{J}_1(h)| \les (e^{C/h} \|\Lambda^{(1)}-\Lambda^{(2)}\|+h)N_{4, {\varrho }}(\ama_1)N_{4, \overline{\varrho }}(\ama_1^*).
\end{align*}
 This completes the proof.
\end{proof}

We will now give the specific Hodge decomposition of symmetric tensor $A  \in H^k(\Omega, \mathbb{C}^{n^2})$ and a vector field $X\in W^{k, p}(\Omega, \mathbb{C}^n)$, with $k \geq 1$ and $p>n$, in the simply connected domain $\Omega \subset \R^n$.

\begin{lemma}\label{L4.1}
Let $k \geq 1$, any $2-$tensor field $A \in H^k(\Omega, \mathbb{C}^{n^2})$ can be uniquely represented as 
\begin{align}\label{4.1}
    A=A^\prime +\nabla_{\text{sym}}V + \vartheta_A~ id,
\end{align}
where $\vartheta_A \in  H^k(\Omega, \mathbb{C})$, the vector field $V
\in  H^{k+1}(\Omega, \mathbb{C}^{n})$ satisfies the boundary
condition
\begin{align}\label{4.2}
 V=0,\quad \textrm{on}~~ \Gamma,   
\end{align}
and the tensor field $A^\prime \in  H^k(\Omega, \mathbb{C}^{n^2})$ satisfies the conditions
\begin{align}\label{4.3}
 \operatorname{trace}(A^\prime)=0,\quad \sum_{j=1}^n \partial_j A^\prime_{jk}=0,~~ \forall k \in \{1, \ldots, n \}. 
\end{align}
Here $\nabla_{\text{sym}}$ is the symmetrized differentiation defined as $$(\nabla_{\text{sym}}V)_{ j,k}=\frac{1}{2}(\partial_{x_j}V_k+\partial_{x_k}V_j).$$
Moreover, the right hand side of \eqref{4.1} depends continuously on $A$ in the following sense : 
\begin{align}\label{4.4}
 \|A^\prime \|_{H^k(\Omega)}  \les  \|A\|_{H^k(\Omega)} ,\quad \|V\|_{H^{k+1}(\Omega)} \les  \|A\|_{H^k(\Omega)} \quad \textrm{and}~~
 \| \vartheta_A \|_{H^k(\Omega)} \les  \|A\|_{H^k(\Omega)}.
\end{align}
\end{lemma}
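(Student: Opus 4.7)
The plan is to set up a coupled elliptic system whose solution yields both $V$ and $\vartheta_A$ simultaneously, then define $A'$ by subtraction and verify the claimed properties.

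\textbf{Step 1: derivation of the defining equation for $V$.} I would start from the ansatz $A' := A - \nabla_{\text{sym}} V - \vartheta_A \operatorname{id}$ and translate the two required conditions \eqref{4.3} into equations for $V$ and $\vartheta_A$. Taking the trace of \eqref{4.1} forces
\begin{equation*}
n\vartheta_A + \operatorname{div}(V) = \operatorname{trace}(A),
\end{equation*}
and hence $\vartheta_A = \tfrac{1}{n}(\operatorname{trace}(A) - \operatorname{div}(V))$. Next, computing the row divergence of \eqref{4.1} and using
\begin{equation*}
\sum_{j=1}^{n}\partial_j (\nabla_{\text{sym}} V)_{jk} = \tfrac{1}{2}\Delta V_k + \tfrac{1}{2}\partial_k \operatorname{div}(V),
\end{equation*}
substituting the expression for $\vartheta_A$ yields the Lam\'e-type system
\begin{equation*}
\tfrac{1}{2}\Delta V + \bigl(\tfrac{1}{2} - \tfrac{1}{n}\bigr)\nabla\operatorname{div}(V) = \operatorname{div}(A) - \tfrac{1}{n}\nabla\operatorname{trace}(A) \quad \text{in } \Omega, \qquad V = 0 \text{ on } \Gamma.
\end{equation*}

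\textbf{Step 2: solvability of the Lam\'e-type Dirichlet problem.} I would then check strong ellipticity of the above operator. In the standard parametrization $\mu \Delta V + (\mu + \lambda)\nabla\operatorname{div}(V)$, one reads off $\mu = \tfrac{1}{2}$ and $\lambda = -\tfrac{1}{n}$, so $\mu > 0$ and $\lambda + 2\mu = 1 - \tfrac{1}{n} > 0$ for $n \geq 2$. Classical elliptic theory for the Dirichlet problem of the Lam\'e system on a smooth bounded domain then gives a unique $V \in H^{k+1}(\Omega,\C^n)$ satisfying $V|_\Gamma = 0$, together with the estimate
\begin{equation*}
\|V\|_{H^{k+1}(\Omega)} \lesssim \bigl\|\operatorname{div}(A) - \tfrac{1}{n}\nabla\operatorname{trace}(A)\bigr\|_{H^{k-1}(\Omega)} \lesssim \|A\|_{H^k(\Omega)}.
\end{equation*}

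\textbf{Step 3: construction of $\vartheta_A$ and $A'$, and verification.} With $V$ in hand I would define $\vartheta_A := \tfrac{1}{n}(\operatorname{trace}(A) - \operatorname{div}(V)) \in H^k(\Omega,\C)$ and $A' := A - \nabla_{\text{sym}} V - \vartheta_A \operatorname{id} \in H^k(\Omega,\C^{n^2})$. By the construction in Step 1, the trace vanishes and each row divergence vanishes, so \eqref{4.3} holds. The estimates \eqref{4.4} follow at once from the Lam\'e estimate, since $\|\vartheta_A\|_{H^k} \lesssim \|\operatorname{trace}(A)\|_{H^k} + \|\operatorname{div}(V)\|_{H^k} \lesssim \|A\|_{H^k}$ and then $\|A'\|_{H^k} \lesssim \|A\|_{H^k}$ by the triangle inequality.

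\textbf{Step 4: uniqueness.} Suppose two decompositions coexist. Setting $W$, $\psi$, $B$ to be the differences, one has $B + \nabla_{\text{sym}} W + \psi \operatorname{id} = 0$ with $\operatorname{trace}(B) = 0$, $\operatorname{div}(B)=0$ (row-wise), and $W|_\Gamma = 0$. Taking trace gives $\psi = -\tfrac{1}{n}\operatorname{div}(W)$, and taking the row divergence yields exactly the homogeneous Lam\'e system for $W$ with zero Dirichlet data. Uniqueness of the Lam\'e Dirichlet problem forces $W \equiv 0$, hence $\psi \equiv 0$ and $B \equiv 0$.

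The main obstacle is the ellipticity/solvability assertion in Step 2: one must recognize that the coupled trace/divergence conditions produce a strongly elliptic Lam\'e operator in $V$ whose ellipticity degenerates (but does not fail) in the limit $n \to \infty$, and then invoke the $H^{k+1}$ regularity theory up to the smooth boundary. Everything else is algebraic manipulation and the triangle inequality.
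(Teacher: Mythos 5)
Your proposal is correct and follows essentially the same route as the paper's Appendix~\ref{appendixA}: both derive the same Lam\'e-type Dirichlet system $-\operatorname{div}(\nabla_{\text{sym}}V)+\tfrac{1}{n}\nabla\operatorname{div} V = -\operatorname{div} A+\tfrac{1}{n}\nabla\operatorname{trace}(A)$, solve it, and then define $\vartheta_A$ and $A'$ by subtraction (the paper proves solvability from scratch by checking ellipticity, the Lopatinskii condition, and the triviality of kernel and cokernel, while you invoke classical Lam\'e theory as a black box, which is a legitimate shortcut). One small imprecision: the condition you cite, $\mu>0$ and $\lambda+2\mu>0$, is strong ellipticity and by itself only yields the Fredholm property; what actually gives uniqueness via the energy/Korn argument is $\mu>0$ together with $\mu+\lambda\geq 0$, which here reads $\tfrac{1}{2}-\tfrac{1}{n}\geq 0$ and holds for $n\geq 2$ (so the borderline degeneration occurs as $n\to 2$, not $n\to\infty$); since the paper works with $n\geq 3$ this is harmless, but the reasoning as stated is a little loose.
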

%%%%%%%%%%%%%%%%%%%%%%%%
This Lemma is a straightforward adaptation of a Theorem $3.3$ in \cite{SV} and, for shake of completeness, we give the proof in the Appendix \ref{appendixA} .
\begin{lemma}\label{L4.9}
Let $k \in \mathbb{N}$ and $p >n$, any vector field $X \in W^{k, p}(\Omega, \mathbb{C}^n)$ can be represented as 
\begin{align}\label{4.80}
X=X^{\prime}+\nabla \vartheta_X,   
\end{align}
where $\vartheta_X \in W^{k+1, p}(\Omega, \C)$ satisfies the
boundary condition
\begin{align}\label{4.9}
    \vartheta_X=0,\quad \textrm{on}~~\Gamma,
\end{align}
and the vector field $X^{\prime} \in W^{k, p}(\Omega, \mathbb{C}^n)$ satisfies the condition
\begin{align}\label{4.100}
  \dive~X^\prime=0 ,\quad \textrm{in} ~~\Omega.  
\end{align}
Moreover, the right hand side of \eqref{4.80} depends continuously on 
$X$ in the following sense : 
\begin{align} \label{4.36}
   \|\vartheta_X\|_{W^{k+1, p}(\Omega)} \les \| X\|_{W^{k, p}(\Omega)},\quad \|X^{\prime}\|_{W^{1, p}(\Omega, \mathbb{C}^n)} \les \|\curl~X^{\prime}\|_{L^{p}(\Omega)}.
\end{align}
\end{lemma}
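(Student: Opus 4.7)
The plan is to obtain the decomposition by solving a single Dirichlet boundary value problem for the Laplacian, mimicking the classical Helmholtz approach.

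First, I would define $\vartheta_X$ as the unique solution of the Poisson problem
\begin{equation*}
\Delta \vartheta_X = \dive X \;\; \text{in} \;\; \Omega, \qquad \vartheta_X = 0 \;\; \text{on} \;\; \Gamma.
\end{equation*}
Because $\Gamma$ is $C^\infty$ and $1<p<\infty$, the Agmon--Douglis--Nirenberg $L^p$-theory for the Dirichlet Laplacian on smooth bounded domains produces a unique solution $\vartheta_X \in W^{k+1,p}(\Omega)$ satisfying
\begin{equation*}
\|\vartheta_X\|_{W^{k+1,p}(\Omega)} \lesssim \|\dive X\|_{W^{k-1,p}(\Omega)} \lesssim \|X\|_{W^{k,p}(\Omega)},
\end{equation*}
which simultaneously yields the boundary condition \eqref{4.9} and the first inequality in \eqref{4.36}. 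The case $k=0$ is handled by interpreting $\dive X$ distributionally in $W^{-1,p}(\Omega)$ and appealing to the corresponding $L^p$-solvability (or duality) formulation.

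Setting next $X' := X - \nabla \vartheta_X \in W^{k,p}(\Omega, \mathbb{C}^n)$, the equation $\Delta \vartheta_X = \dive X$ gives
\begin{equation*}
\dive X' = \dive X - \Delta \vartheta_X = 0 \;\; \text{in} \;\; \Omega,
\end{equation*}
which is \eqref{4.100}. The identity $\curl X' = \curl X$ then follows from $\curl \nabla = 0$, and the structural part of the decomposition \eqref{4.80} is complete.

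The remaining and most delicate point is the $W^{1,p}$-control of $X'$ by $\|\curl X'\|_{L^p}$ alone. I would derive this from the $L^p$-regularity theory for the div--curl system on simply connected bounded $C^\infty$ domains: combining the vanishing of the first de Rham cohomology (which rules out nontrivial harmonic vector fields in the relevant gauge class) with Calder\'on--Zygmund estimates for the Biot--Savart type representation of a divergence-free field, one obtains
\begin{equation*}
\|Y\|_{W^{1,p}(\Omega)} \lesssim \|\curl Y\|_{L^p(\Omega)}
\end{equation*}
for any admissible divergence-free $Y \in W^{1,p}(\Omega, \mathbb{C}^n)$. Applying this estimate to $Y = X'$ closes the argument. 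The main obstacle I foresee is precisely the justification of this sharp curl-only estimate without an extra lower-order $\|X'\|_{L^p}$ term on the right-hand side: it rests crucially on the simple connectedness of $\Omega$ (which eliminates harmonic vector fields compatible with the gauge inherited from the construction) and on the smoothness of $\Gamma$ (which permits the sharp Calder\'on--Zygmund bounds required by the Biot--Savart argument).
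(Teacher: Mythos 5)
The paper itself gives no explicit proof of this lemma (only the tensor decomposition, Lemma~\ref{L4.1}, is proved in Appendix~\ref{appendixA}), but its subsequent usage — solving the Dirichlet Poisson problem $\Delta\vartheta_B=\dive B$, $\vartheta_B|_\Gamma=0$, for the vector field $B$ — shows that the intended construction coincides with yours. The structural part of your argument (defining $\vartheta_X$ via the Dirichlet Laplacian, deducing $\dive X'=0$ and $\curl X'=\curl X$, and bounding $\|\vartheta_X\|_{W^{k+1,p}}$ by $\|X\|_{W^{k,p}}$ via ADN theory) is correct.

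The curl-only estimate $\|X'\|_{W^{1,p}}\lesssim\|\curl X'\|_{L^p}$ is where there is a real gap. As stated for arbitrary $X\in W^{k,p}(\Omega,\C^n)$, the estimate is simply false: take $X$ to be a nonzero constant vector; then $\dive X=0$, so $\vartheta_X\equiv 0$ and $X'=X$, while $\curl X'=0$, so the right-hand side vanishes but the left-hand side does not. Your appeal to simple connectedness and vanishing first cohomology does not close this: in the absence of a boundary condition on $X'$, the space of divergence-free, curl-free vector fields (gradients of harmonic functions) is infinite-dimensional, regardless of topology, and ``the gauge inherited from the construction'' is not actually pinned down by anything in your argument. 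What makes the estimate true in the settings where the paper invokes it is a boundary condition you did not identify: since $\vartheta_X=0$ on $\Gamma$, its tangential gradient vanishes on $\Gamma$, hence $X'\times\nu=X\times\nu$ on $\Gamma$. The paper applies the lemma only to fields $X$ whose trace vanishes on $\Gamma$ (namely $B\in\mathcal{E}'(\Omega)$ and $V$ with $V|_\Gamma=0$), so $X'\times\nu=0$ on $\Gamma$. With that tangential boundary condition, $X'$ solves the classical div--curl system on a simply connected domain with connected boundary, for which uniqueness holds (a harmonic gradient tangentially trapped on $\Gamma$ must be zero) and the $W^{1,p}$ estimate in terms of $\|\curl X'\|_{L^p}$ follows from $L^p$ elliptic theory. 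To make your proof complete, you should either add the hypothesis $X\times\nu=0$ on $\Gamma$ to the lemma and then derive $X'\times\nu=0$ from the Dirichlet condition on $\vartheta_X$, or restrict to $X\in\mathcal{E}'(\Omega)$ as the paper does in practice.
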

%%%%%%%%%%%%%%%%%%%%%%%%%%%%%%
The extension by zero outside $\Omega$ of the $2-$tensor field $A^\prime$, the vector fields $V$ and $X^\prime$, and the functions $\vartheta_A$ and $\vartheta_X$ given by the 
previous Lemma is still denoted by the same letters.\medskip \\
The idea will be now to use the specific Hodge decomposition given 
by Lemmas \ref{L4.1} and \ref{L4.9} to decompose the symmetric tensor $A=A^{(2)}-A^{(1)} \in \mathcal{A}_{\sigma_1}\cap \mathcal{E}^\prime(\Omega)$ and the vector field $ B=B^{(2)}-B^{(1)} \in \mathcal{B}_{\sigma_2}\cap \mathcal{E}^\prime(\Omega)$ and write
\begin{align}
    A=A^\prime+\nabla_{\text{sym}}V+\vartheta_A~ id,\label{A}
\end{align}
where $A^\prime \in H^{\sigma_1}(\Omega)$ satisfies \eqref{4.3}, the vector field $V \in H^{\sigma_1+1}(\Omega)$ satisfies \eqref{4.2} and the function $\vartheta_A \in H^{\sigma_1}(\Omega)$. Moreover, we have 
\begin{align}\label{A1}
 \|A^\prime \|_{H^{\sigma_1}(\Omega)}  \les \|A\|_{H^{\sigma_1}(\Omega)} ,\quad \|V\|_{H^{\sigma_1}(\Omega)} \les \|A\|_{H^{\sigma_1}(\Omega)} \quad \textrm{and}~~
 \| \vartheta_A \|_{H^{\sigma_1}(\Omega)} \les \|A\|_{H^{\sigma_1}(\Omega)}.
\end{align}
In addition, since $\sigma_1 >\frac{n}{2}+3$~ then by using Sobolev's embedding one can easily see that $A^\prime \in W^{3, \infty}(\Omega, \C^{n^2})$, $V \in W^{4, \infty}(\Omega, \C^n)$ and $\vartheta_A \in W^{3, \infty}(\Omega, \C)$.\\
Further, the vector field $B$ can be decomposed by Lemma \ref{L4.9} as 
\begin{align}
 B=B^\prime+\nabla \vartheta_B,\label{B}   
\end{align}
where $B^\prime \in L^{ \infty}(\Omega, \mathbb{C}^{n})$ satisfies \eqref{4.100} and \eqref{4.36}. Hence, the Morrey's inequality yields
\begin{align}\label{B1}
    \|B^\prime\|_{L^\infty(\Omega)} \les \|\curl~B\|_{L^{\infty}(\Omega)}.
\end{align}
Moreover, the function $\vartheta_B \in W^{1, +\infty}(\Omega, \mathbb{C})$ solving the following boundary value problem
\begin{equation}
\left\{\begin{array}{ll}
\Delta \vartheta_B  = \dive~B & \mathrm{in}\, \Omega,\cr
\vartheta_B  =0  &\mathrm{on}\, \Gamma.
\end{array}
\right.
\end{equation}
Therefore, using the theorem on regular solvability of elliptic problem we have $\vartheta_B \in H^{\sigma_2+1}(\Omega)$ and satisfies the following estimate
\begin{align}\label{vartheta_B1}
     \| \vartheta_B\|_{H^{\sigma_2}(\Omega)} \les \|B\|_{H^{\sigma_2}(\Omega)}.
\end{align}

\label{section3}
%%%%%%%%%%%%%%%%%%%%%%%%%%%%%%%%%
\section{Stability estimates (first case \texorpdfstring{$m>2$}{Lg})} \label{section4}
This section is dedicated to proving the stable determination of
the symmetric tensor $A$, the vector field $B$ and the electric
potential $q$ from the Dirichlet to Neumann map $\Lambda_{A,B,q}$, 
in the case $m >2$.
We will use the family of solutions called complex geometric 
solutions, constructed in the previous section, to estimate 
the Fourier transform of the difference of the coefficients.\\
Consider a priori constant $M>0$, $\sigma_1 > \frac{n}{2}+3$ and $\sigma_2 > \frac{n}{2}+1$. Let $A^{(j)} \in \mathcal{A}_{\sigma_1}(M)\cap \mathcal{E}^\prime(\Omega)$, $B^{(j)} \in \mathcal{B}_{\sigma_2}(M)\cap \mathcal{E}^\prime(\Omega)$ and $q^{(j)}\in \mathcal{Q}_{}(M)$, $j=1,2$, be two sets of coefficients. We denote as section \ref{section3}
\begin{equation}
A=A^{(2)}-A^{(1)},\qquad B=B^{(2)}-B^{(1)}\quad\text{and  }~~~~ q=q^{(2)}-q^{(1)}.
\end{equation}
Moreover, without loss of generality, we will assume that
\begin{align}
    \|\Lambda^{(1)}-\Lambda^{(2)}\| \ll 1.
\end{align}

In order to prove the stability estimates, we need the following estimate.
\begin{lemma}\label{LA}
Let $k \in \N$ and $n_0 \geq 1$ be given. Let $\eta \in H^\sigma(\Omega)$, such that $\|\eta\|_{H^\sigma(\Omega)} \leq M$, for some 
$M>0$ and $ \sigma > \frac{n}{2}+k$, and let $\Lambda >0$. We assume that for any $0 <h \ll 1$ and $\xi \in \R^n$, we have
\begin{align}\label{0}
    \abs{\widehat{\widetilde{\eta} } (\xi )} \les \langle \xi \rangle^{n_0}(e^{C_0/h} \Lambda^{\mu_0} +h^{\kappa_0}),
\end{align}
for some $0< \mu_0 \leq 1$, $0< \kappa_0 \leq 1$ and $C_0>0$. Here $\widetilde{\eta}$ denotes the extension of $\eta$ outside $\Omega$  which satisfies $\tilde{\eta} \in H^\sigma(\R^n)$ and $\Vert \tilde{\eta}\Vert_{H^{\sigma}(\R^{n})}\les \,\Vert \eta\Vert_{H^{\sigma}(\Omega)}$. Then there exist $\mu, \kappa \in (0, 1)$ such that
\begin{align}\label{1}
    \| \eta \|_{W^{k, \infty}(\Omega)} \les e^{C/h} \Lambda^\mu +h^\kappa,
\end{align}
for any $h>0$ small enough. Here $C$ is positive constant independent of $h$ and $\kappa$ depends only on $n$ and $n_0$. Moreover, for any $n_1 \in \N$ there exist $\ell \gg 1$ and $\kappa_\ell >0$ such that 
\begin{align}\label{1*}
  h^{-n_1}  \| \eta \|_{W^{k, \infty}(\Omega)} \les e^{C/h^\ell} \Lambda^\mu +h^{\kappa_\ell}.
\end{align}

\end{lemma}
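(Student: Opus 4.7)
The plan is a classical Fourier cutoff decomposition into low and high frequencies, balancing the pointwise bound \eqref{0} against the $H^\sigma$-a priori control on $\eta$. First I would extend $\eta$ to $\widetilde{\eta}\in H^\sigma(\R^n)$ with $\|\widetilde{\eta}\|_{H^\sigma(\R^n)}\les M$. Since $\sigma>n/2+k$, Fourier inversion combined with Sobolev embedding gives
\begin{equation*}
\|\eta\|_{W^{k,\infty}(\Omega)}\les \int_{\R^n}\langle\xi\rangle^{k}|\widehat{\widetilde{\eta}}(\xi)|\,d\xi.
\end{equation*}
Cutting this integral at $|\xi|=R$, the low-frequency piece is controlled by \eqref{0} by $R^{N}(e^{C_0/h}\Lambda^{\mu_0}+h^{\kappa_0})$, with $N:=k+n_0+n$, and the high-frequency piece is controlled by $MR^{-\delta}$, $\delta:=\sigma-k-n/2>0$, via Cauchy--Schwarz against the $H^\sigma$-norm of $\widetilde{\eta}$. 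This produces the master inequality
\begin{equation*}
\|\eta\|_{W^{k,\infty}(\Omega)}\les R^{N}(e^{C_0/h}\Lambda^{\mu_0}+h^{\kappa_0})+MR^{-\delta}.
\end{equation*}

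To obtain \eqref{1}, I balance the two contributions by picking $R=(e^{C_0/h}\Lambda^{\mu_0}+h^{\kappa_0})^{-1/(N+\delta)}$. Both terms then collapse to a power $\theta:=\delta/(N+\delta)\in(0,1)$ of $(e^{C_0/h}\Lambda^{\mu_0}+h^{\kappa_0})$, and the subadditivity $(a+b)^{\theta}\leq a^{\theta}+b^{\theta}$ yields \eqref{1} with $\mu=\mu_0\theta$, $\kappa=\kappa_0\theta$ and $C=C_0\theta$.

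For the refinement \eqref{1*}, the key observation is that hypothesis \eqref{0} holds at every sufficiently small scale, not only at the $h$ that appears in the conclusion. I would invoke \eqref{0} at scale $h^{\ell}$ to get $|\widehat{\widetilde{\eta}}(\xi)|\les\langle\xi\rangle^{n_0}(e^{C_0/h^{\ell}}\Lambda^{\mu_0}+h^{\ell\kappa_0})$, and then in the master inequality take the cutoff $R=h^{-\alpha}$. Fixing $\alpha$ so that $\alpha\delta>n_1$, and then choosing $\ell$ so large that $\ell\kappa_0-\alpha N>n_1$, the two polynomial-in-$h$ contributions are forced below $h^{n_1+\kappa_\ell}$ for some $\kappa_\ell>0$, while the prefactor $h^{-\alpha N}$ accompanying $e^{C_0/h^{\ell}}$ is absorbed into $e^{C'/h^{\ell}}$ thanks to $\log(1/h)\leq h^{-\ell}$ for $h\ll 1$. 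Multiplying by $h^{-n_1}$ and repeating the same absorption once more on the exponential term yields \eqref{1*}; the value of $\mu$ can be taken equal to the one produced in \eqref{1} by using $\Lambda\ll 1$ to trade $\Lambda^{\mu_0}$ for $\Lambda^{\mu_0\theta}$.

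I do not anticipate a genuine obstacle: the argument is a standard frequency interpolation. The only conceptual ingredient specific to \eqref{1*} is the rescaling trick $h\mapsto h^{\ell}$, which trades a polynomially larger exponential weight $e^{C/h^{\ell}}$ for a remainder $h^{\ell\kappa_0}$ of arbitrarily high order in $h$, so that the polynomial prefactor $h^{-n_1}$ on the left can be absorbed.
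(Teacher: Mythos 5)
Your proof is correct, and it reaches the same conclusion as the paper while taking a somewhat different technical route. For \eqref{1}, the paper first bounds $\|\eta\|_{H^{-1}(\Omega)}$ using the same low/high frequency cutoff at $\langle\xi\rangle=R$, then upgrades to $W^{k,\infty}$ by Sobolev embedding together with interpolation between $H^{-1-|\alpha|}$ and $H^{n/2+2\delta+|\alpha|}$, where $\delta=\frac{1}{2}\big(\sigma-\tfrac{n}{2}-|\alpha|\big)$. You skip that intermediate weak-norm step by bounding the $W^{k,\infty}$-norm directly through $\int\langle\xi\rangle^{k}|\widehat{\widetilde\eta}|\,d\xi$ and Cauchy--Schwarz on the high-frequency tail, which is a cleaner single balancing step. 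The paper's cutoff choice is also different: it takes $R=h^{-\kappa_0/(n+2n_0)}$, balancing only $h^{\kappa_0}$ against the $H^\sigma$-tail and then absorbing the resulting polynomial prefactor on the exponential via $h^{-a}\les e^{C'/h}$, whereas you balance against the full quantity $A=e^{C_0/h}\Lambda^{\mu_0}+h^{\kappa_0}$ -- which is fine, but you should remark that the regime $A\geq1$ (equivalently $R\leq1$) is trivially covered by the a priori Sobolev bound $\|\eta\|_{W^{k,\infty}(\Omega)}\les M$, which your choice of $R$ would otherwise not produce. For \eqref{1*}, the paper's argument is considerably simpler than yours: since \eqref{1} already holds for all sufficiently small $h$, it substitutes $h\mapsto h^{\ell}$ in \eqref{1} to get $\|\eta\|_{W^{k,\infty}(\Omega)}\les e^{C/h^{\ell}}\Lambda^{\mu}+h^{\ell\kappa}$, picks $\ell$ with $\ell\kappa>n_1$, multiplies by $h^{-n_1}$, and absorbs $h^{-n_1}\les e^{C''/h^{\ell}}$. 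Your re-derivation from the master inequality with a fresh cutoff $R=h^{-\alpha}$ and hypothesis invoked at scale $h^{\ell}$ is valid but heavier than necessary. In short, your treatment of \eqref{1} is leaner than the paper's, while the paper's treatment of \eqref{1*} is leaner than yours; both proofs are correct.
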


\begin{proof}
First, we may bound the $H^{-1}-$norm of $ \eta$. For $R >1$, to be chosen later, we can obtain 
from \eqref{0} the following inequality
\begin{align}\label{2}
 \| \eta\|_{H^{-1}(\Omega)}^2 &\leq    \| \widetilde{\eta}\|_{H^{-1}(\R^n)}^2=\int_{\langle \xi \rangle \leq R} \abs{\widehat{\widetilde{\eta}}(\xi)}^2 \langle \xi \rangle^{-2} d\xi+\int_{\langle \xi \rangle >  R} \abs{\widehat{\widetilde{\eta}}(\xi)}^2 \langle \xi \rangle^{-2} d\xi\cr
 &\les R^{2n_0+n-2}(e^{{2C_0}/{h}} \Lambda^{2\mu_0} +h^{\kappa_0})+M^2 R^{-2}.
\end{align}
Choosing $R>1$  such that $R^{2n_0+n-2} h^{\kappa_0}=R^{-2}$ that is $R=h^{\frac{- \kappa_0}{n+2n_0}}$ then we deduce from \eqref{2} for $h_0$ sufficiently small that
\begin{align*}
   \|\eta\|_{H^{-1}(\Omega)} \les  e^{C^\prime/h} \Lambda^{\mu_0} + h^{\frac{\kappa_0}{n+2n_0}},
\end{align*}
for some positive constant $C^\prime$ and all $h < h_0$. Here we used the fact that $h^{-a} \leq e^{C^\prime/h}$, for some $0<a \leq 1$.\\
In order to complete the proof of the theorem, let 
$\delta=\frac{1}{2}(\sigma-(\frac{n}{2}+\abs{\ama}))>0$, where $\ama \in \N^n$ such that $\abs{\ama} \leq k$, then using Sobolev's 
embedding theorem together with interpolation theorem, we end up getting the following inequality 
\begin{align*}
 \|D^\alpha \eta\|_{L^{\infty}(\Omega)} & \les \|D^\alpha \eta\|_{H^{n/2+\delta }(\Omega)} \\
& \les \|D^\alpha \eta\|_{H^{-1-\abs{\ama}}(\Omega)}^{\kappa_1}\|D^\alpha \eta \|_{H^{n/2+2 \delta }(\Omega)}^{ 1-\kappa_1} \\
& \les e^{{ \kappa_1 C^\prime}/{h}} \Lambda^{\mu_0 \kappa_1} + h^{\frac{\kappa_0\kappa_1}{n+2n_0}},
\end{align*}
for some $\kappa_1 \in (0,1)$. This concludes the proof of \eqref{1} with $\mu=\mu_0 \kappa_1$ and $\kappa=\frac{\kappa_0\kappa_1}{n+2n_0}$.\\
We move now to prove \eqref{1*} we observe that \eqref{1} implies in particular 
\begin{align*}
    \| \eta \|_{W^{k, \infty}(\Omega)} \les e^{C/h^\ell} \Lambda^\mu +h^{\ell\kappa},
\end{align*}
for all $\ell \gg 1$. Choosing $\ell$ such that $\kappa_\ell:=\ell \kappa -n_1 >0$, we obtain 
\begin{align*}
  h^{-n_1}  \| \eta \|_{W^{k, \infty}(\Omega)} \les e^{C/h^\ell} \Lambda^\mu +h^{\kappa_\ell},
\end{align*}
with possibly different constant $C$. This completes the proof.
\end{proof}
%%%%%%%%%%%%%%%%%%%%%%%%%%%%%%%%%%%%
%%%%%%%%%%%%%%%%%%%%%%

As a corollary of Lemma \ref{L3.3} and the decomposition \eqref{A}, we have the following integral identity.
\begin{lemma}\label{L4.2}
Let $A^\prime \in W^{3, +\infty}(\Omega, \mathbb{C}^{n^2})$ and $V
\in W^{4, +\infty}(\Omega, \mathbb{C}^{n})$ given by \eqref{A}. Then, for all $\ama_1,\,\ama_1^*\in C^\infty(\overline{\Omega})$ satisfy \eqref{3.13} and \eqref{3.14} respectively, we have
\begin{align}\label{4.5}
  \int_\Omega (A^\prime \varrho \cdot \varrho) \ama_1\overline{\ama_1^*}dx-\int_\Omega (\varrho \cdot V) \varrho \cdot \nabla (\ama_1\overline{\ama_1^*})dx=\mathcal{I}(h), 
\end{align}
where 
\begin{align}\label{4.6}
 |  \mathcal{I}(h)| \les (e^{C/h} \|\Lambda^{(1)}-\Lambda^{(2)}\|+h)N_{2m, \varrho}(\ama_1)N_{2m, \overline{\varrho }}(\ama_1^*),
\end{align}
for any $h>0$ small enough. Here $C$ is a positive constant independent of $h$. 
\end{lemma}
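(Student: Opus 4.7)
The plan is to start from the integral identity in Lemma~\ref{L3.3} (case $m>2$),
\begin{equation*}
\int_{\Omega}(A\varrho\cdot\varrho)\,\ama_1\overline{\ama_1^*}\,dx=\mathcal{I}_2(h),
\end{equation*}
which already has the right-hand side $\mathcal{I}(h)$ we want, and then rewrite the left-hand side using the Hodge decomposition \eqref{A} of $A=A^{(2)}-A^{(1)}$ as $A=A^\prime+\nabla_{\text{sym}}V+\vartheta_A\,\mathrm{id}$.

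The first key observation is that the isotropic piece disappears for free: because $\varrho=\omega+i\tilde\omega$ with $\omega,\tilde\omega\in\mathbb{S}^{n-1}$ and $\omega\cdot\tilde\omega=0$, one has $\varrho\cdot\varrho=|\omega|^2-|\tilde\omega|^2+2i\,\omega\cdot\tilde\omega=0$, so $\vartheta_A(\mathrm{id})\varrho\cdot\varrho\equiv 0$. Thus only $A^\prime$ and $\nabla_{\text{sym}}V$ contribute. The second observation is the algebraic identity
\begin{equation*}
(\nabla_{\text{sym}}V)\varrho\cdot\varrho
=\sum_{j,k}\tfrac12(\partial_jV_k+\partial_kV_j)\varrho_j\varrho_k
=\sum_{j,k}\varrho_j\varrho_k\partial_jV_k
=\varrho\cdot\nabla(\varrho\cdot V),
\end{equation*}
where the second equality uses the symmetry $\varrho_j\varrho_k=\varrho_k\varrho_j$.

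Next I would integrate by parts in the term coming from $\nabla_{\text{sym}}V$. Since the Hodge decomposition \eqref{A} provides $V=0$ on $\Gamma$ (condition \eqref{4.2}), the boundary contribution vanishes and
\begin{equation*}
\int_{\Omega}\varrho\cdot\nabla(\varrho\cdot V)\,\ama_1\overline{\ama_1^*}\,dx
=-\int_{\Omega}(\varrho\cdot V)\,\varrho\cdot\nabla(\ama_1\overline{\ama_1^*})\,dx.
\end{equation*}
Combining the three contributions yields
\begin{equation*}
\int_{\Omega}(A\varrho\cdot\varrho)\,\ama_1\overline{\ama_1^*}\,dx
=\int_{\Omega}(A^\prime\varrho\cdot\varrho)\,\ama_1\overline{\ama_1^*}\,dx
-\int_{\Omega}(\varrho\cdot V)\,\varrho\cdot\nabla(\ama_1\overline{\ama_1^*})\,dx,
\end{equation*}
so the left-hand side of \eqref{4.5} equals $\mathcal{I}_2(h)$, which by Lemma~\ref{L3.3} satisfies exactly the bound \eqref{4.6}. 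Setting $\mathcal{I}(h):=\mathcal{I}_2(h)$ finishes the proof.

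There is no serious obstacle here: the statement is essentially a bookkeeping consequence of Lemma~\ref{L3.3} together with the specific Hodge decomposition of Lemma~\ref{L4.1}. The only things to be careful about are the vanishing $\varrho\cdot\varrho=0$ (which is why the isotropic part $\vartheta_A\,\mathrm{id}$ is invisible to the identity, and why in Section~\ref{section4} one has to recover $\vartheta_A$ by a separate argument using different amplitudes), and the use of the boundary condition $V|_\Gamma=0$ built into the Hodge decomposition to justify the integration by parts without boundary terms.
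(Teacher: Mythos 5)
Your proposal is correct and follows the paper's own proof essentially step for step: substitute the Hodge decomposition into the identity of Lemma~\ref{L3.3} (case $m>2$), use $\varrho\cdot\varrho=0$ to eliminate the $\vartheta_A\,\mathrm{id}$ contribution, and integrate by parts using $V|_\Gamma=0$ from \eqref{4.2}. Your explicit derivation of the identity $(\nabla_{\mathrm{sym}}V)\varrho\cdot\varrho=\varrho\cdot\nabla(\varrho\cdot V)$ is a helpful piece of bookkeeping that the paper leaves implicit.
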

\begin{proof}
Substituting $A$ into the left hand side of \eqref{3.26}, we get 
\begin{align}\label{4.7}
     \int_\Omega (A^\prime \varrho \cdot \varrho) \ama_1\overline{\ama_1^*}dx+\int_\Omega (\nabla_{\text{sym}}V \varrho \cdot \varrho) \ama_1\overline{\ama_1^*}dx=\mathcal{I}_2(h),
\end{align}
where $\mathcal{I}_2(h)$ satisfy \eqref{3.27}. Here we used the fact that $\varrho \cdot \varrho=0$.\\
Now, using an integration by part for the second term in the left hand side of \eqref{4.7} and taking into account the condition \eqref{4.2}, we get our desired estimate.
\end{proof}
%%%%%%%%%%%%%%%%%%%%

With the help of the previous lemma, we are now in position to derive an estimate for the $2-$tensor $A^\prime$ as follow.

%%%%%%%%%%%%%%%%%%%%%%%%%%%%%%%%%%%%%%%%

\begin{lemma}\label{L4.5}
Let $A^\prime$ as above. There exist $C_1>0$ and $\mu_1, \kappa_1 \in (0,1)$, such that we have
\begin{align}\label{4.24}
     \|A^\prime\|_{L^{\infty}(\Omega)} \les  e^{C_1/h} \|\Lambda^{(1)}-\Lambda^{(2)}\|^{\mu_1} +h^{\kappa_1},
\end{align}
for any $h >0$ small enough. Here $C_1$ is independent of $h$ and $\kappa_1$ depends only on $n$ and $m$.
\end{lemma}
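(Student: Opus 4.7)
The plan is to feed the integral identity of Lemma~\ref{L4.2} with a pair of oscillating amplitudes designed so that the $V$-term disappears and the $A'$-term becomes a Fourier coefficient, then to use the algebraic constraints \eqref{4.3} on $A'$ (coming from the Hodge-type decomposition \eqref{A}) to recover every entry of $\widehat{A'}$, and finally to invoke Lemma~\ref{LA} to pass from this Fourier bound to the desired $L^\infty$ estimate.

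Fix $\xi\in\R^n\setminus\{0\}$ and choose $\omega,\widetilde{\omega}\in\s^{n-1}$ with $\omega\cdot\widetilde{\omega}=\omega\cdot\xi=\widetilde{\omega}\cdot\xi=0$; such a pair exists because $n\geq 3$. Set $\varrho=\omega+i\widetilde{\omega}$ and take
\[
\alpha_1(x)=e^{-ix\cdot\xi/2},\qquad \alpha_1^*(x)=e^{ix\cdot\xi/2}.
\]
Since $\varrho\cdot\xi=\overline{\varrho}\cdot\xi=0$, the amplitudes satisfy the transport equations \eqref{3.13}, \eqref{3.14}; the identity $(\varrho\cdot\nabla)^{m-1}\alpha_1\equiv 0$ forces
\[
N_{2m,\varrho}(\alpha_1)+N_{2m,\overline{\varrho}}(\alpha_1^*)\les\langle\xi\rangle^{2m},
\]
and the product $\alpha_1\overline{\alpha_1^*}=e^{-ix\cdot\xi}$ is annihilated by $\varrho\cdot\nabla$, so the second integral in \eqref{4.5} vanishes identically. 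Lemma~\ref{L4.2} then reduces to
\[
|\widehat{A'\varrho\cdot\varrho}(\xi)|\les\langle\xi\rangle^{4m}\bigl(e^{C/h}\|\Lambda^{(1)}-\Lambda^{(2)}\|+h\bigr).
\]

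The next step is to invert the contraction $A'\varrho\cdot\varrho$ in favour of the individual entries of $\widehat{A'}(\xi)$. Fix an orthonormal basis $\{e_1(\xi),\ldots,e_{n-1}(\xi),\xi/|\xi|\}$ of $\R^n$. For any pair of distinct indices $i,j\in\{1,\ldots,n-1\}$, the two choices $(\omega,\widetilde{\omega})=(e_i,\pm e_j)$ produce $\widehat{A'_{ii}}(\xi)-\widehat{A'_{jj}}(\xi)\pm 2i\widehat{A'_{ij}}(\xi)$; adding and subtracting these complex combinations furnishes polynomial-in-$\langle\xi\rangle$ bounds on both $\widehat{A'_{ij}}(\xi)$ and $\widehat{A'_{ii}}(\xi)-\widehat{A'_{jj}}(\xi)$ in the adapted basis. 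The divergence-free condition in \eqref{4.3} yields $\widehat{A'_{nk}}(\xi)=0$ for all $k$, and the trace-free condition then collapses to $\sum_{i=1}^{n-1}\widehat{A'_{ii}}(\xi)=0$; this linear system is solvable and controls each individual diagonal. Since the change-of-basis matrix is orthogonal, the same estimate transfers component-wise to the standard basis, and by continuity (the extension of $A'$ by zero outside $\Omega$ is compactly supported) it extends to $\xi=0$. We thus obtain, for every $j,k$,
\[
|\widehat{\widetilde{A'_{jk}}}(\xi)|\les\langle\xi\rangle^{4m}\bigl(e^{C/h}\|\Lambda^{(1)}-\Lambda^{(2)}\|+h\bigr),\qquad\xi\in\R^n,
\]
and Lemma~\ref{LA} applied with $k=0$, $n_0=4m$, $\mu_0=\kappa_0=1$ and $\sigma=\sigma_1>\tfrac{n}{2}+3>\tfrac{n}{2}$ produces \eqref{4.24}, with $\kappa_1$ depending only on $n$ and $m$ through $n_0=4m$.

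The principal difficulty is the algebraic inversion in the third step: without both the trace-free and the divergence-free constraints on $A'$, the scalar contraction $A'\varrho\cdot\varrho$ over isotropic directions $\varrho\in\xi^\perp+i\,\xi^\perp$ carries strictly less information than $\widehat{A'}(\xi)$ and the argument cannot close. This is precisely why the ambient-trace part $\vartheta_A\,\mathrm{id}$ (which contributes nothing anyway because $\varrho\cdot\varrho=0$) and the symmetric-gradient part $\nabla_{\mathrm{sym}}V$ (which disappears thanks to the boundary vanishing of $V$) had to be stripped off by the special Hodge decomposition \eqref{A} before the CGO machinery was engaged.
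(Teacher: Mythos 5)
Your proposal is correct and follows the same strategy as the paper: evaluate the integral identity of Lemma~\ref{L4.2} on oscillating amplitudes with $\varrho\cdot\xi=0$ so the $V$-term drops out, pass to the adapted orthonormal basis $\{\omega_1,\ldots,\omega_{n-1},\xi/|\xi|\}$ and use the divergence-free and trace-free constraints in \eqref{4.3} to recover each entry of $\widehat{A'}(\xi)$, and conclude via Lemma~\ref{LA}. The only (immaterial) cosmetic difference is that you split the phase symmetrically, $\alpha_1=e^{-ix\cdot\xi/2}$, $\alpha_1^*=e^{ix\cdot\xi/2}$, rather than taking $\alpha_1=e^{-ix\cdot\xi}$, $\alpha_1^*=1$ as the paper does, which yields $\langle\xi\rangle^{4m}$ instead of $\langle\xi\rangle^{2m}$ in the Fourier bound but has no effect since Lemma~\ref{LA} only needs polynomial growth.
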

\begin{proof}
Let $\omega, \widetilde{\omega} \in \mathbb{S}^{n-1}$ and $\xi \in \mathbb{R}^n$ such that $\xi \cdot \omega =\xi \cdot \widetilde{\omega }=\omega \cdot \widetilde{\omega}=0$. Choosing $\ama_1 (x)=e^{-i x\cdot \xi}$ and $\ama_1^*(x)=1$,~~for $x \in \Omega$. It is clear that $\ama_1 $ and $\ama_1^*$ satisfy, respectively, the transport equations \eqref{3.13} and \eqref{3.14}. Then, by using Lemma \ref{L4.2} and the fact that $\varrho \cdot \xi=0$, we obtain
\begin{align}\label{4.8}
  \abs{\int_{\Omega} e^{-i x\cdot \xi} {A^\prime}(x)\varrho \cdot \varrho \mathrm{~d} x}  \les \langle\xi \rangle^{2m}\bigr(e^{C/h }\|\Lambda^{(1)}-\Lambda^{(2)}\|+h\bigr). 
\end{align}
Now, let us fix $\xi \in \mathbb{R}^n\setminus \{ 0\}$. Following \cite{B2}, we consider the orthonormal basis $\mathscr{B}$ of $\mathbb{R}^n$ as
\begin{align*}
   \mathscr{B}=\{ \omega_1, \omega_2, \ldots, \omega_n\} \qquad \textrm{with} \quad \omega_n=\frac{\xi}{\abs{\xi }}.
\end{align*}
The inequality \eqref{4.8} is valid for all $\omega, \widetilde{\omega} \in \mathbb{S}^{n-1}$ such that $\omega \cdot \xi= \widetilde{\omega} \cdot \xi =0$. Then, for $\varrho=\omega_j+i \omega_k$, where $j, k \in \{1, \ldots, n-1\}$, $j \neq k$, we have
\begin{align}\label{4.10}
    \abs{\widehat{A^\prime}(\xi) (\omega_j+i \omega_k)\cdot(\omega_j+i\omega_k)  }  \les \langle\xi \rangle^{2m}\bigr(e^{C/h }\|\Lambda^{(1)}-\Lambda^{(2)}\|+h\bigr). 
\end{align}
Replacing $\omega_k$ by $-\omega_k$ in $\mathscr{B}$ and doing the same analysis as before we get
\begin{align}\label{4.11}
    \abs{\widehat{A^\prime}(\xi) (\omega_j-i \omega_k)\cdot(\omega_j-i\omega_k) }  \les \langle\xi \rangle^{2m}\bigr(e^{C/h }\|\Lambda^{(1)}-\Lambda^{(2)}\|+h\bigr),
\end{align}
for all $j,k\in\{1, \ldots, n-1\}, ~j \neq k.$ Combining \eqref{4.10} and \eqref{4.11} together we obtain
\begin{align}\label{4.12}
| \widehat{A^\prime}(\xi)\omega_j \cdot \omega_j - \widehat{A^\prime}(\xi)\omega_k \cdot \omega_k |\les\langle\xi \rangle^{2m}\bigr(e^{C/h }\|\Lambda^{(1)}-\Lambda^{(2)}\|+h\bigr), 
\end{align}
for all $j,k\in\{1, \ldots, n-1\}, ~j \neq k.$ Moreover, their difference gives us 
\begin{align}\label{4.13}
| \widehat{A^\prime}(\xi)\omega_j \cdot \omega_k  |\les\langle\xi \rangle^{2m}\bigr(e^{C/h }\|\Lambda^{(1)}-\Lambda^{(2)}\|+h\bigr),
\end{align} 
for all $ j,k\in\{1, \ldots, n-1\}, ~j \neq k.$ In addition, since $A^\prime$ is 
divergence free we get $\sum_{k=1}^{n} \widehat{A^\prime}_{jk}(\xi)\xi_k=0$, for 
any $j\in \{1, \ldots, n\}$, then, due to the fact that $A^\prime$ is symmetric we can obtain that 
\begin{align}\label{4.14}
    \widehat{A^\prime}(\xi)\omega_j \cdot \xi=0, \quad  j = {1, \ldots, n}.
\end{align}
Which shows that the vector $\widehat{A^\prime}(\xi)\omega_j$ can be written with respect to orthonormal basis $\mathscr{B}$ as
\begin{align}\label{4.15}
    \widehat{A^\prime}(\xi)\omega_j=\sum_{k=1}^{n-1}\alpha_k(\xi)\omega_k\quad j={1, \ldots, n},
\end{align}
where $\alpha_k(\xi) \in \mathbb{C}$. Let us write $\lambda_{j, k}(\xi):=\widehat{A^\prime}(\xi)\omega_j \cdot \omega_k,~\textrm{where}~j, k\in\{1, \ldots, n-1\}.$
Then it follows from \eqref{4.12} and \eqref{4.13} that
\begin{align}
  | \lambda_{j, j}(\xi) - \lambda_{k, k}(\xi)  |&\les \langle\xi \rangle^{2m}\bigr(e^{C/h }\|\Lambda^{(1)}-\Lambda^{(2)}\|+h\bigr), \label{4.16}   
\end{align}
and for $j \neq k$, we have 
\begin{align}\label{4.17}
 | \lambda_{j, k}(\xi)  |&\les \langle\xi \rangle^{2m}\bigr(e^{C/h }\|\Lambda^{(1)}-\Lambda^{(2)}\|+h\bigr).
\end{align}
Let us define the symmetric $n \times n$ matrix $M(\xi)$ given by
\begin{align*}
M(\xi)=\begin{pmatrix}
\lambda_{1,1}(\xi) & \ldots & \lambda_{1,n-1}(\xi) & 0 \\
\vdots  & \ddots  & \vdots  & \vdots  \\
\lambda_{n-1,1}(\xi) & \ldots  & \lambda_{n-1,n-1}(\xi) &  0\\
0 & \ldots  & 0 & 0 \\
\end{pmatrix}.
\end{align*}
Then, we can write $ \widehat{A^\prime}(\xi)={}^t\mathscr{B} M(\xi) \mathscr{B}, $ where $^t \mathscr{B}$ is the transpose of the matrix $\mathscr{B}$. We observe that $M=D+M^\prime$, where $D$ is the diagonal matrix given as $$D(\xi)=\operatorname{Diag}\bigr(\lambda_{1,1}(\xi), \ldots, \lambda_{n-1, n-1}(\xi),0\bigr),$$ and $M^\prime=M-D$. Since $\operatorname{trace}(A^\prime)=0$ then $\operatorname{trace}(D)=0$ which implies, for any $j=1, \ldots,n-1$, that 
\begin{align*}
 \lambda_{j,j}(\xi)&=-\sum_{\substack{k=1 \\{k \neq j}}}^{n-1} \lambda _{k,k}(\xi)=-\sum_{\substack{k=1 \\{k \neq j}}}^{n-1} \bigr(\lambda_{k,k}-\lambda_{j,j}\bigr)(\xi)-(n-2)\lambda_{j,j}(\xi).  
\end{align*}
Then, using \eqref{4.16} we get, for all $j=1, \ldots n-1$
\begin{align}\label{4.18}
    | \lambda_{j,j}(\xi)| \les \langle\xi \rangle^{2m}\bigr(e^{C/h }\|\Lambda^{(1)}-\Lambda^{(2)}\|+h\bigr). 
\end{align}
On the other hand, it follows from \eqref{4.17} that
\begin{align}\label{4.19}
    | M^\prime_{j,k}(\xi)| \les \langle\xi \rangle^{2m}\bigr(e^{C/h }\|\Lambda^{(1)}-\Lambda^{(2)}\|+h\bigr). 
\end{align}
Thus, from \eqref{4.18} and \eqref{4.19} one can directly conclude that
\begin{align}\label{4.20}
   |\widehat{A^\prime}(\xi) |\les  \langle\xi \rangle^{2m}\bigr(e^{C/h }\|\Lambda^{(1)}-\Lambda^{(2)}\|+h\bigr),
\end{align}
here we used the fact that $|\omega_j|=1$, $j={1, \ldots, n}$.\\
However, since $A^\prime$ satisfy the inequality \eqref{A1} then by using Lemma \ref{LA} we obtain our desired estimate.
\end{proof}
%%%%%%%%%%%%%%%%%%%%%%%%%%%%%%

The next step is to estimate the vector field $V \in W^{4, \infty}(\Omega, \mathbb{C}^n)$, given by decomposition \eqref{A}. To do this, we use again the Hodge decomposition for a vector fields given by Lemma \ref{L4.9} and we write
\begin{align}
   V&=V^\prime+\nabla \vartheta_V,\label{V}  
\end{align}
where $V^{\prime} \in W^{3, \infty}(\Omega, \mathbb{C}^n)$ satisfies \eqref{4.100} and 
\begin{align}\label{V1}
    \|V^\prime\|_{L^{\infty}(\Omega)} \les \|\curl~V\|_{L^{\infty}(\Omega)},
\end{align}
Moreover, the function $\vartheta_V \in W^{4, \infty}(\Omega, \C)$ satisfies the boundary condition $\vartheta_V=0$, on $\Gamma$, and
\begin{align}\label{vartheta_V1}
     \| \vartheta_V\|_{H^{\sigma_1}(\Omega)} \les \|V\|_{H^{\sigma_1}(\Omega)}.
\end{align}
Here $$\curl~V=\sum_{j, k=1}^{n} \mathrm{v}_{jk} d x_{j} \wedge d x_{k},$$
with  $\mathrm{v}_{jk}(x)=\partial_{x_j}V_k-\partial _{x_k}V_j , ~~j, k=1, \ldots, n$
and $V_{k}(x)=V(x) \cdot e_{k},~ x \in \Omega,$ where $\left(e_{k}\right)_{k}$ is the canonical basis of $\mathbb{R}^{n}$.
\smallskip

The $L^\infty-$norm of the vector field $V^\prime$ can be estimate as follow.

\begin{lemma}\label{L4.7}
Let $V^\prime$ given by \eqref{V}. There exist $C_2 >0$ and $\mu_2, \kappa_2 \in (0, 1)$ such that we have the following estimate
  \begin{align}
  \|V^\prime\|_{W^{1, \infty}(\Omega)} \les e^{{C_2}/{h}} \|\Lambda^{(1)}-\Lambda^{(2)}\|^{\mu_2} +h^{\kappa_2},
      \label{4.30}
  \end{align}
for any $h >0$ small enough. Here $C_2$ is a positive constant independent of $h$ and $\kappa_2 $ depends only on $m$ and $n$.
\end{lemma}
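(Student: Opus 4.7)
The plan is to apply the integral identity from Lemma \ref{L4.2} with a pair of amplitudes specifically designed to annihilate the $A'$-contribution while extracting the Fourier transform of $V'$. This mirrors the strategy of Lemma \ref{L4.5}, except that the roles of the two terms in \eqref{4.5} are swapped: there $\ama_1 = e^{-ix\cdot\xi}$ and $\ama_1^* = 1$ killed the $V$-term, whereas here I want the $V$-term to survive, which is achieved by taking $\ama_1$ to be a first-order polynomial.

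Fix $\xi \in \R^n \setminus \{0\}$ and complete $\omega_n := \xi/\abs{\xi}$ to an orthonormal basis $\{\omega_1, \ldots, \omega_n\}$ of $\R^n$. For any $j \neq k$ in $\{1, \ldots, n-1\}$, I would set $\omega = \omega_j$, $\widetilde{\omega} = \omega_k$, $\varrho = \omega + i\widetilde{\omega}$, so that $\varrho \cdot \varrho = 0$, $\varrho \cdot \xi = \overline{\varrho} \cdot \xi = 0$ and $\varrho \cdot \omega = 1$. The amplitudes I would use are
$$\ama_1(x) = x \cdot \omega, \qquad \ama_1^*(x) = e^{ix\cdot \xi}.$$
Both satisfy the required transport equations \eqref{3.13}--\eqref{3.14} (using $m \geq 2$ for $\ama_1$ and $\overline{\varrho} \cdot \xi = 0$ for $\ama_1^*$), and the quantities $N_{2m,\varrho}(\ama_1)$ and $N_{2m,\overline{\varrho}}(\ama_1^*)$ are both controlled by a constant times $\langle \xi \rangle^{2m}$. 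A direct computation gives
$$\varrho \cdot \nabla(\ama_1 \overline{\ama_1^*}) = (\varrho \cdot \omega) e^{-ix\cdot\xi} + (x\cdot\omega)(-i\varrho\cdot\xi) e^{-ix\cdot\xi} = e^{-ix\cdot\xi},$$
while the $A'$-integrand in \eqref{4.5} vanishes identically thanks to $\varrho \cdot \varrho = 0$. Lemma \ref{L4.2} then yields the pointwise Fourier bound
$$\abs{\varrho \cdot \widehat{V}(\xi)} \lesssim \langle \xi \rangle^{2m}\bigl(e^{C/h}\|\Lambda^{(1)} - \Lambda^{(2)}\| + h\bigr).$$

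Next I would use the Hodge decomposition $V = V' + \nabla \vartheta_V$ from \eqref{V}. Because $\varrho \cdot \xi = 0$, the gradient contribution is killed in Fourier space, so $\varrho \cdot \widehat{V}(\xi) = \varrho \cdot \widehat{V'}(\xi)$. Running through all admissible pairs $j \neq k$ in $\{1, \ldots, n-1\}$ and separating real from imaginary parts of $\varrho \cdot \widehat{V'}(\xi) = \omega_j \cdot \widehat{V'}(\xi) + i \omega_k \cdot \widehat{V'}(\xi)$ bounds the component $\omega_j \cdot \widehat{V'}(\xi)$ for every $j \leq n-1$. The remaining component along $\omega_n$ vanishes because $V'$ is divergence-free, which translates into $\xi \cdot \widehat{V'}(\xi) = 0$ on the Fourier side. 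Altogether,
$$\abs{\widehat{V'}(\xi)} \lesssim \langle \xi \rangle^{2m}\bigl(e^{C/h}\|\Lambda^{(1)} - \Lambda^{(2)}\| + h\bigr), \qquad \xi \in \R^n.$$

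To conclude, I would feed this pointwise Fourier bound together with the a priori bound $\|V'\|_{H^{\sigma_1}(\Omega)} \lesssim M$ into Lemma \ref{LA} applied with $k = 1$, $n_0 = 2m$, $\mu_0 = \kappa_0 = 1$; this produces the advertised $W^{1,\infty}$ estimate \eqref{4.30}. The main subtlety is the tailoring of $\ama_1 = x \cdot \omega$: the linear factor is exactly what makes $\varrho \cdot \nabla(\ama_1 \overline{\ama_1^*})$ nontrivial (producing a pure exponential with constant coefficient $\varrho \cdot \omega = 1$), while the identity $\varrho \cdot \varrho = 0$ still annihilates the $A'$-term, and the constraint $m \geq 2$ is precisely what forces $(\varrho \cdot \nabla)^m(x \cdot \omega) = 0$ so that $\ama_1$ indeed solves the transport equation \eqref{3.13}.
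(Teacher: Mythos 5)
Your amplitude choice $\ama_1 = x\cdot\omega$, $\ama_1^* = e^{ix\cdot\xi}$ is equivalent to the paper's $\ama_1 = e^{-ix\cdot\xi}(x\cdot\omega)$, $\ama_1^* = 1$ (the product $\ama_1\overline{\ama_1^*}$ is the same and satisfies the same transport equations), and the computation $\varrho\cdot\nabla(\ama_1\overline{\ama_1^*}) = e^{-ix\cdot\xi}$ is correct. However, there are two genuine gaps.

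\textbf{Gap 1 (critical).} You assert that ``the $A'$-integrand in \eqref{4.5} vanishes identically thanks to $\varrho\cdot\varrho = 0$.'' This is false. The quantity $A'\varrho\cdot\varrho$ appearing in \eqref{4.5} is the bilinear form $\sum_{j,k}A'_{jk}\varrho_j\varrho_k$ applied to the complex vector $\varrho$, and it does \emph{not} vanish merely because the Euclidean quadratic $\varrho\cdot\varrho$ vanishes; that cancellation happens only for isotropic tensors of the form $\lambda\,\mathrm{id}$ (which is precisely why the $\vartheta_A\,\mathrm{id}$ piece drops out in the proof of Lemma~\ref{L4.2}, and why the paper restricts to isotropic $A$ when $m=2$). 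The trace-free, divergence-free piece $A'$ gives a genuinely nonzero contribution. The paper's proof therefore carries along a term $\|A'\|_{L^\infty(\Omega)}$ in the Fourier bound on $\varrho\cdot\widehat{V}(\xi)$ (see \eqref{4.25}), and only then disposes of it by invoking Lemma~\ref{L4.5}, which was proved beforehand for exactly this purpose. Without that step you get an incomplete estimate.

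\textbf{Gap 2.} To bound the $\omega_n = \xi/|\xi|$ component of $\widehat{V'}$, you appeal to $\xi\cdot\widehat{V'}(\xi) = 0$ ``because $V'$ is divergence-free.'' But $V'$ does not vanish on $\Gamma$ (only $V$ and $\vartheta_V$ do), so the zero extension of $V'$ acquires a distributional divergence $-V'\cdot\nu\,\delta_\Gamma$, and $\xi\cdot\widehat{V'}(\xi)$ need not vanish. The paper sidesteps this entirely: it estimates the Fourier transform of the components $\mathrm{v}_{jk}$ of $\curl V$ (for which only the vanishing of $V$ on $\Gamma$ is needed), applies Lemma~\ref{LA} to obtain $\|\curl V\|_{L^\infty}$, and then uses the elliptic estimate $\|V'\|_{W^{1,p}} \les \|\curl V'\|_{L^p} = \|\curl V\|_{L^p}$ from the Hodge decomposition \eqref{4.36}, followed by Sobolev embedding/interpolation to upgrade to $W^{1,\infty}$. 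Your shortcut through $\widehat{V'}$ directly is not justified as written.

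In short: the structure of your argument (annihilate one term, extract a Fourier bound, feed into Lemma~\ref{LA}) is the right one, but the $A'$ term cannot be waved away and the passage to $V'$ should go through $\curl V$ and the elliptic estimate rather than through $\widehat{V'}$ itself.
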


\begin{proof}
First, we start by estimate the Fourier transform of $\curl~V$. Let $\xi \in \mathbb{R}^{n}$. We select $\omega, \widetilde{\omega} \in \mathbb{S}^{n-1}$ such that $\xi, \omega$ and $\widetilde{\omega}$ be three mutually orthogonal vectors in $\R^{n}$. Choosing $\ama_1 (x)=e^{-i x\cdot \xi}(x \cdot \omega)$ and $\ama_1^*(x)=1$,~~for $x \in \Omega$. It is clear that $\ama_1 $ and $\ama_1^*$ satisfy respectively \eqref{3.13} and \eqref{3.14}. Then, by using Lemma \ref{L4.2} and the fact that $\varrho \cdot \xi=0$, we obtain 
\begin{align}\label{4.25}
  \abs{ \varrho \cdot \widehat{V}(\xi)}  \les \langle\xi \rangle^{2m}\bigr( \|A^\prime\|_{L^{\infty}(\Omega)} +e^{C/h }\|\Lambda^{(1)}-\Lambda^{(2)}\|+h\bigr).  
\end{align}
In addition Lemma \ref{L4.5} yields
\begin{align}\label{4.27}
    \abs{ \varrho \cdot \widehat{V}(\xi)} \les \langle \xi\rangle^{2m} \bigr(e^{C^\prime/h}\|\Lambda^{(1)}-\Lambda^{(2)}\|^{\mu_1}+h^{\kappa_1} \bigr),
\end{align}
for some constant $C^\prime=\max(C, C_1) > 0$. Replacing $\widetilde{\omega }$ by $-\widetilde{\omega }$ and doing the same analysis as before, we get 
\begin{align}\label{4.28}
   \abs{ \overline{\varrho} \cdot \widehat{V}(\xi)} \les \langle \xi\rangle^{2m} \bigr(e^{C^\prime/h}\|\Lambda^{(1)}-\Lambda^{(2)}\|^{\mu_1}+h^{\kappa_1} \bigr).
\end{align}
Combining the inequalities \eqref{4.27} and \eqref{4.28} together, we find that
\begin{align}
    \ \abs{ \omega \cdot \widehat{V}(\xi)} \les \langle \xi\rangle^{2m} \bigr(e^{C^\prime/h}\|\Lambda^{(1)}-\Lambda^{(2)}\|^{\mu_1}+h^{\kappa_1} \bigr).
    \label{4.29}
\end{align}
For any fixed $\xi \in \mathbb{R}^n$ with $\xi \cdot \omega=0$. Let us choose $\omega =\frac{\xi_{j} e_{k}-\xi_{k} e_{j}}{|\xi_{j} e_{k}-\xi_{k} e_{j}|}$, for $j \neq k \in \{ 1, \ldots, n\}$,  then by multiplying \eqref{4.29} by $\abs{\xi_{j} e_{k}-\xi_{k} e_{j}}$, we obtain
\begin{align}\label{4.26}
\abs{\widehat{\mathrm{v}}_{jk}(\xi)} \les \langle \xi\rangle^{2m+1} \bigr(e^{C^\prime/h}\|\Lambda^{(1)}-\Lambda^{(2)}\|^{\mu_1}+h^{\kappa_1} \bigr).
\end{align}
Hence, from Lemma \ref{LA} and the inequality \eqref{V1}, there exist $C > 0$ and $\mu, \kappa=\kappa(n, m) \in (0,1)$ such that we have 
\begin{align}
   \|V^\prime\|_{L^{\infty}(\Omega)} \les  \|\curl~V\|_{L^{\infty}(\Omega)} \les e^{C/h} \|\Lambda^{(1)}-\Lambda^{(2)}\|^{\mu} +h^{\kappa}.\label{4.31'}
\end{align}
In order to complete the proof of the theorem, let $\delta=\frac{1}{2}(\sigma_1-(\frac{n}{2}+1))>0$, then using Sobolev's embedding theorem together with interpolation theorem and Plancherel theorem, we get for $\kappa_0 \in (0,1)$
\begin{align*}
    \|DV^\prime\|_{L^{\infty}(\Omega)} & \les \|DV^\prime\|_{H^{n/2+\delta }(\Omega)} \\
& \les \|DV^\prime\|_{H^{-1}(\Omega)}^{\kappa_0}\|DV^\prime\|_{H^{n/2+2 \delta }(\Omega)}^{ 1-\kappa_0} \\
 &\les \|V^\prime\|_{L^\infty(\Omega)}^{\kappa_0}.
\end{align*}
This and the inequality \eqref{4.31'} conclude the proof.
\end{proof}
%%%%%%%%%%%%%%%%%%%%%%%%%%%%%

With the help of the above Lemma, we may now prove the $W^{1, \infty}-$norm of $V$.
\begin{lemma}\label{L4.11}
Let $V$ given by \eqref{V}. There exist $C_3>0$ and $\mu_3$, $\kappa_3 \in (0,1)$ such that the following estimate
\begin{align}\label{4.40}
   \|V\|_{W^{1, \infty}(\Omega)}&\les e^{C_3/h} \|\Lambda^{(1)}-\Lambda^{(2)}\|^{\mu_3} +h^{\kappa_3},
\end{align}
holds for any $h >0$ small enough. Here $C_3$ is independent of $h$ and $\kappa_3$ depends on $n$ and $m$.
\end{lemma}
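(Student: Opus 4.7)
The plan is to combine Lemma \ref{L4.7} with the Hodge decomposition $V=V'+\nabla\vartheta_V$ provided by \eqref{V}, reducing the required estimate for $V$ to an estimate for $\nabla\vartheta_V$. By the triangle inequality,
\begin{equation*}
\|V\|_{W^{1,\infty}(\Omega)}\leq \|V'\|_{W^{1,\infty}(\Omega)}+\|\nabla\vartheta_V\|_{W^{1,\infty}(\Omega)},
\end{equation*}
and Lemma \ref{L4.7} controls the first term by the right-hand side of \eqref{4.40}. For the second term, I would exploit the overdetermined boundary data of $\vartheta_V$: since $V|_\Gamma=0$ and $\vartheta_V|_\Gamma=0$, one has $\nabla\vartheta_V|_\Gamma=-V'|_\Gamma$, and hence the trace inequality together with Lemma \ref{L4.7} yields
\begin{equation*}
\|\partial_\nu \vartheta_V\|_{L^\infty(\Gamma)}\les \|V'\|_{W^{1,\infty}(\Omega)}\les e^{C_2/h}\|\Lambda^{(1)}-\Lambda^{(2)}\|^{\mu_2}+h^{\kappa_2} =: \epsilon_1.
\end{equation*}

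Next, $\vartheta_V$ satisfies the Poisson equation $\Delta\vartheta_V=\operatorname{div} V$ in $\Omega$ with overdetermined Cauchy data $(0,\partial_\nu\vartheta_V|_\Gamma)$ of size $\epsilon_1$, but with source $\operatorname{div} V$ only a priori bounded by $M$. Applying a quantitative Cauchy stability estimate for the Laplacian (via a Carleman or three-balls type inequality) combined with the a priori bound $\|\vartheta_V\|_{H^{\sigma_1+1}(\Omega)}\les M$ provided by Lemma \ref{L4.9} and \eqref{vartheta_V1}, one derives a low-regularity interior bound of the form $\|\vartheta_V\|_{H^s(\Omega)}\les \epsilon_1^{\theta}M^{1-\theta}$ for some $s\geq 0$ and $\theta\in(0,1)$. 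The Sobolev interpolation in Lemma \ref{LA} then promotes this to a $W^{2,\infty}(\Omega)$ bound on $\vartheta_V$ of the desired form $e^{C_3/h}\|\Lambda^{(1)}-\Lambda^{(2)}\|^{\mu_3}+h^{\kappa_3}$, which gives the required control on $\|\nabla\vartheta_V\|_{W^{1,\infty}(\Omega)}$; adding the two contributions yields \eqref{4.40}.

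The main obstacle is that the source $\operatorname{div} V$ in the Poisson equation for $\vartheta_V$ is only a priori bounded, not small, so plain elliptic regularity does \emph{not} produce the needed smallness of $\vartheta_V$. One must instead rely on a stability estimate for the overdetermined Cauchy problem for the Laplacian, using only the smallness of the Dirichlet and Neumann boundary traces of $\vartheta_V$. This is precisely where a logarithmic-type loss appears in the final bound, and it accounts for the fact that the exponents $\mu_3,\kappa_3\in(0,1)$ depend only on $n$ and $m$.
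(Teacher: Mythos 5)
Your reduction $\|V\|_{W^{1,\infty}}\leq\|V'\|_{W^{1,\infty}}+\|\nabla\vartheta_V\|_{W^{1,\infty}}$ and the handling of $V'$ via Lemma~\ref{L4.7} are fine, and the observation that $V|_\Gamma=0$ together with $\vartheta_V|_\Gamma=0$ forces $\nabla\vartheta_V|_\Gamma=-V'|_\Gamma$, hence $\partial_\nu\vartheta_V|_\Gamma=-V'\cdot\nu|_\Gamma$ is small, is correct and a nice idea. The gap is in the step that converts smallness of the Cauchy trace of $\vartheta_V$ into smallness of $\vartheta_V$ itself. A quantitative Cauchy-stability or three-spheres estimate for $\Delta u=f$ controls interior values of $u$ by its Cauchy data only when $f$ is \emph{also} small; here $\dive V$ is merely a priori bounded by $M$, and smallness of the Cauchy data alone gives nothing. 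Concretely, take $\Omega$ the unit ball, pick $g\in C^\infty(\overline\Omega)$ with $g|_\Gamma=0$, $\partial_\nu g|_\Gamma=0$, $g(0)=1$ and $\|g\|_{W^{2,\infty}}=O(1)$, and set $V=\nabla g$. Then $V|_\Gamma=0$ and the decomposition \eqref{4.80} yields $V'=0$ and $\vartheta_V=g$, so $\epsilon_1=0$ while $\|\vartheta_V\|_{H^s}\gtrsim1$ for every $s\geq0$. There is therefore no estimate of the form $\|\vartheta_V\|_{H^s}\les\epsilon_1^\theta M^{1-\theta}$, and the last two steps of your argument cannot go through.

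The deeper point is that a bound on $V'$ alone cannot imply a bound on $V$: genuinely new information must be drawn from the D-to-N map, and this is what the paper does. It returns to the integral identity \eqref{4.5} of Lemma~\ref{L4.2} with a \emph{new} pair of amplitudes, $\ama_1(x)=e^{-ix\cdot\xi}$ and the quadratic $\ama_1^*(x)=-\tfrac12(\omega\cdot x)^2$; after two integrations by parts (using $\vartheta_V|_\Gamma=0$, $\varrho\cdot\xi=0$, and $(\varrho\cdot\nabla)^2\overline{\ama_1^*}=-1$) the identity isolates $\widehat\vartheta_V(\xi)$ and yields
\begin{align*}
\abs{\widehat\vartheta_V(\xi)}\les\langle\xi\rangle^{2m}\bigl(\|V'\|_{L^\infty}+\|A'\|_{L^\infty}+e^{C/h}\|\Lambda^{(1)}-\Lambda^{(2)}\|+h\bigr).
\end{align*}
Only then are Lemmas~\ref{L4.5}, \ref{L4.7} and \ref{LA} (with the a priori bound \eqref{vartheta_V1}) invoked to upgrade this to a $W^{2,\infty}$ bound on $\vartheta_V$ and hence on $V$. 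To repair your proposal you would need to feed the D-to-N information back in for $\vartheta_V$ in a similar way, rather than rely on elliptic unique-continuation alone.
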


\begin{proof}
In the first step, we will estimate $\widehat{\vartheta}_V$ given by \eqref{V}. Let $\xi \in \R^n$, we choose $\omega, \widetilde{\omega} \in \mathbb{S}^{n-1}$ such that $\xi, \omega$ and $\widetilde{\omega}$ be three mutually orthogonal vectors in $\R^{n}$. By substituting $V$ into the left hand side of \eqref{4.5}, we get for $\ama_1(x)=e^{-i x \cdot \xi}$
\begin{equation}\label{4.38}
\int_\Omega e^{-i x \cdot \xi} (\varrho \cdot \nabla \vartheta_V)  (\varrho \cdot \nabla \overline{\ama_1^*}) ~dx=-\int_\Omega e^{-i x \cdot \xi} (\varrho \cdot V^{\prime}) (\varrho \cdot \nabla \overline{\ama_1^*})~dx + \mathcal{I}_3(h),
\end{equation}
where $\mathcal{I}_3$ satisfies 
\begin{align}\label{4.39}
  |  \mathcal{I}_3(h)| \les  \langle \xi\rangle^{2m}(\|A^\prime\|_{L^\infty}+e^{C/h} \|\Lambda^{(1)}-\Lambda^{(2)}\|+h)N_{2m, \overline{\varrho }}(\ama_1^*).
\end{align}
So using integration by parts for the left hand side of the above equality, we obtain
\begin{align*}
- \int_{\Omega} e^{-i x \cdot \xi} \vartheta_V (\varrho \cdot \nabla)^2 \overline{ \ama_1^*}\mathrm{~d} x =- \int_\Omega e^{-i x \cdot \xi} (\varrho \cdot V^{\prime}) (\varrho \cdot \nabla \overline{\ama_1^*})~dx + \mathcal{I}_3(h),  
\end{align*}
here we used the fact that $ \vartheta_V =0$, on $\Gamma $, and $\omega \cdot \xi=\widetilde{\omega} \cdot \xi=0$. We consider now $\ama_1^*(x)=-\frac{1}{2}(\omega  \cdot x)^2$, $x \in \Omega$, which satisfies \eqref{3.14}.
Thus, one can show that
\begin{align*}
    \abs{ \widehat{\vartheta}_V(\xi)}
\les   \langle \xi\rangle^{2m}(\|V^\prime \|_{L^{\infty}(\Omega)}+\|A^\prime\|_{L^\infty}+e^{C/h} \|\Lambda^{(1)}-\Lambda^{(2)}\|+h).
\end{align*}
Then from Lemmas \ref{L4.5} and \ref{L4.7}, we obtain
\begin{align*}
\abs{\widehat{\vartheta}_V(\xi)}
\les  \langle \xi\rangle^{2m} (  e^{C^\prime/h}\|\Lambda^{(1)}-\Lambda^{(2)}\|^{\theta_1}+h^{\theta_2}),
\end{align*}
where $C^\prime=\max(C, C_1, C_2)$ is a positive constant independent of $h$,  $\theta_1 =\min(\mu_1, \mu_2) \in (0,1)$ depends on $n$ and $m$, and $\theta_2=\min(\kappa_1, \kappa_2 ) \in (0,1)$.\\
Moreover, as $\vartheta_V$ verifies \eqref{vartheta_V1} then by applying Lemma \ref{LA}, we get 
\begin{align}\label{varth_V}
    \|\vartheta_V\|_{W^{2, \infty}(\Omega)}&\les e^{C_0/h} \|\Lambda^{(1)}-\Lambda^{(2)}\|^{\mu_0} +h^{\kappa_0},
\end{align}
for some positive constant $C_0$ and $\mu_0$, $\kappa_0 \in (0,1)$.\\ Taking into account \eqref{V}, we may conclude from Lemma \ref{L4.7} and \eqref{varth_V} that Lemma \ref{L4.11} is completely proved with $C_3=\max(C_2, C_0)$ which is a positive constant independent of $h$, $\mu_3 =\min(\mu_2, \mu_0)\in (0,1)$ and $\kappa_3=\min(\kappa_2, \kappa_0 )\in (0,1)$ which depends on $n$ and $m$.
\end{proof}
%%%%%%%%%%%%%%%%%%%%%%%%%%%%%%%%%%%%%%%%%%%%%%%%%%%%
In order to estimate the function $\vartheta_A$ given by \eqref{A}, we start by the following integral identity.
 \begin{lemma}\label{L4.12}
 Let $B \in \mathcal{B}_{\sigma_2}(M) \cap \mathcal{E}^\prime(\Omega)$ and $\vartheta_A$ given by \eqref{A}. There exist $\ell_1 \gg 1$ and $\mu_4$, $\kappa_4 \in (0,1)$ such that for all $\ama_1,\,\ama_1^*\in C^\infty(\overline{\Omega})$ satisfy \eqref{3.13} and \eqref{3.14} respectively, we have
\begin{equation}\label{4.41}
\int_\Omega 2 \vartheta_A (\varrho \cdot \nabla \ama_1)\overline{\ama_1^*}dx + i\int_\Omega (\varrho \cdot B) \ama_1\overline{\ama_1^*}dx=\mathcal{J}(h),
\end{equation}
where 
\begin{align}\label{4.42}
 |  \mathcal{J}(h)| \les \bigr( e^{C_4/{h^{\ell_1}}} \|\Lambda^{(1)}-\Lambda^{(2)}\|^{\mu_4}+h^{\kappa_4})N_{2m, \varrho}(\ama_1)N_{2m, \overline{\varrho }}(\ama_1^*).
\end{align}
Here $C_4$ is a positive constant independent of $h$ and $\kappa_4, \ell_1$ depend only on $n$ and $m$. 
 \end{lemma}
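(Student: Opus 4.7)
The plan is to take the identity \eqref{3.15} of Lemma \ref{L3.2}, substitute the Hodge-type decomposition $A=A^\prime+\nabla_{\text{sym}}V+\vartheta_A\,\mathrm{id}$ into it, and isolate the two target integrals. Since $\varrho\cdot\varrho=0$, the $\vartheta_A\,\mathrm{id}$ contribution to the $h^{-1}A\varrho\cdot\varrho$ term vanishes identically; this is the essential reason why the decomposition \eqref{A} is the right replacement for \eqref{***}. For the remaining piece $h^{-1}(\nabla_{\text{sym}}V)\varrho\cdot\varrho$, the symmetry of $\nabla_{\text{sym}}V$ gives $(\nabla_{\text{sym}}V)\varrho\cdot\varrho=(\varrho\cdot\nabla)(\varrho\cdot V)$, which I then integrate by parts using the boundary condition $V|_\Gamma=0$. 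After rearrangement, the identity becomes
\begin{align*}
2\int_\Omega \vartheta_A(\varrho\cdot\nabla\ama_1)\overline{\ama_1^*}\,dx+i\int_\Omega(\varrho\cdot B)\ama_1\overline{\ama_1^*}\,dx
&=\mathcal{I}(h)-h^{-1}\int_\Omega(A^\prime\varrho\cdot\varrho)\ama_1\overline{\ama_1^*}\,dx\\
&\quad+h^{-1}\int_\Omega(\varrho\cdot V)(\varrho\cdot\nabla)(\ama_1\overline{\ama_1^*})\,dx\\
&\quad-2\int_\Omega(A^\prime+\nabla_{\text{sym}}V)\varrho\cdot\nabla\ama_1\,\overline{\ama_1^*}\,dx.
\end{align*}

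The next step is to estimate the right-hand side. The error $\mathcal{I}(h)$ is controlled by Lemma \ref{L3.2} together with the bound $\|A\varrho\cdot\varrho\|_{L^\infty}\les\|A^\prime\|_{L^\infty}+\|V\|_{W^{1,\infty}}$, and the two integrals of the form $\int(A^\prime+\nabla_{\text{sym}}V)\varrho\cdot\nabla\ama_1\,\overline{\ama_1^*}$, which carry no factor $h^{-1}$, are handled by Cauchy--Schwarz combined with the $L^\infty$-bounds of Lemmas \ref{L4.5} and \ref{L4.11}. These steps already produce a contribution of the correct form $(e^{C/h}\|\Lambda^{(1)}-\Lambda^{(2)}\|^{\mu}+h^{\kappa})N_{2m,\varrho}(\ama_1)N_{2m,\overline{\varrho}}(\ama_1^*)$ for some $\mu,\kappa\in(0,1)$.

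The main obstacle is the two integrals still carrying the factor $h^{-1}$: naively inserting the bounds of Lemmas \ref{L4.5} and \ref{L4.11} generates a term of order $h^{\kappa-1}$, which blows up as $h\to 0^+$. To eliminate this pathology I will invoke the strengthened form \eqref{1*} of Lemma \ref{LA} with $n_1=1$, applied directly to the Fourier bounds already proved: estimate \eqref{4.20} for $A^\prime$, and the Fourier bounds for the $\curl$-components $\mathrm{v}_{jk}$ of $V$ and for $\vartheta_V$ appearing in the proof of Lemma \ref{L4.11}. The trade-off is to replace $e^{C/h}$ by $e^{C_4/h^{\ell_1}}$ for some $\ell_1\gg 1$, in exchange for a strictly positive remainder $h^{\kappa_\ell}$. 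Recombining through the Hodge decomposition $V=V^\prime+\nabla\vartheta_V$ then yields
$$h^{-1}\left(\|A^\prime\|_{L^\infty(\Omega)}+\|V\|_{W^{1,\infty}(\Omega)}\right)\les e^{C_4/h^{\ell_1}}\|\Lambda^{(1)}-\Lambda^{(2)}\|^{\mu_4}+h^{\kappa_4},$$
for suitable $\mu_4,\kappa_4\in(0,1)$ depending only on $n$ and $m$.

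Collecting every estimate, retaining the worst constants ($\ell_1$ the largest, $\mu_4$ and $\kappa_4$ the smallest), and absorbing the $\ama_1,\ama_1^*$ dependence into $N_{2m,\varrho}(\ama_1)N_{2m,\overline{\varrho}}(\ama_1^*)$ gives the desired bound \eqref{4.42}. Beyond the elementary manipulation of the decomposition \eqref{A}, the only nontrivial step is the appeal to the enhanced Lemma \ref{LA}, which is precisely the reason the $h^{-n_1}$ strengthening of that lemma was set up earlier.
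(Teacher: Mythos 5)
Your proposal is correct and follows essentially the same route as the paper: substitute the decomposition \eqref{A} of $A$ into the identity \eqref{3.15} of Lemma \ref{L3.2} (exploiting $\varrho\cdot\varrho=0$ to kill the $\vartheta_A\,\mathrm{id}$ contribution to the $h^{-1}$ term), isolate the two target integrals, and then absorb the troublesome $h^{-1}\bigl(\|A'\|_{L^\infty}+\|V\|_{W^{1,\infty}}\bigr)$ factor by combining Lemmas \ref{L4.5} and \ref{L4.11} with the strengthened form \eqref{1*} of Lemma \ref{LA}, at the cost of replacing $e^{C/h}$ by $e^{C_4/h^{\ell_1}}$. The only cosmetic difference is that you additionally integrate the $h^{-1}(\nabla_{\mathrm{sym}}V)\varrho\cdot\varrho$ term by parts (so it is controlled by $\|V\|_{L^\infty}$ rather than $\|V\|_{W^{1,\infty}}$), whereas the paper bounds it directly in $W^{1,\infty}$; since Lemma \ref{L4.11} already delivers the $W^{1,\infty}$ bound anyway, this does not change the argument.
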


\begin{proof}
Substituting $A$ in Lemma \ref{L3.2} we get 
\begin{align*}
    &\int_\Omega 2 \vartheta_A (\varrho \cdot \nabla \ama_1)\overline{\ama_1^*}dx + i\int_\Omega (\varrho \cdot B) \ama_1\overline{\ama_1^*}dx\\&=-\int_\Omega \bigr(h^{-1} A^\prime \varrho \cdot \varrho \ama_1+h^{-1} \nabla_{\text{sym}}V \varrho \cdot \varrho \ama_1+2 A^\prime \varrho \cdot \nabla \ama_1+2 \nabla_{\text{sym}}V \varrho \cdot \nabla \ama_1 \bigr)\overline{\ama_1^*}dx+\mathcal{I}(h)\cr
    &:=\mathcal{J}(h),
\end{align*}
where $\mathcal{I}(h)$ satisfies 
\begin{align*}
\abs{ \mathcal{I}(h)}
  &\les (e^{C/h} \|\Lambda^{(1)}-\Lambda^{(2)}\|+h+\|A^\prime \|_{L^\infty}+\|V\|_{W^{1, \infty}})N_{2m, \varrho}(\ama_1)N_{2m, \overline{\varrho }}(\ama_1^*).
\end{align*}
Then by applying the Cauchy-Schwarz inequality for the first term in the right hand side of the above expression we obtain
\begin{align}\label{J0}
\abs{ \mathcal{J}(h)}
  &\les \bigr( h^{-1} (\|A^\prime \|_{L^\infty}+\|V\|_{W^{1, \infty}})+e^{C/h} \|\Lambda^{(1)}-\Lambda^{(2)}\|+h\bigr)N_{2m, \varrho}(\ama_1)N_{2m, \overline{\varrho }}(\ama_1^*).
\end{align}
Using now Lemmas \ref{L4.5} and \ref{L4.11}, then with the help of the last part of Lemma \ref{LA}, there exist $\widetilde{\ell}_1, \widetilde{\ell}_4 \gg1$ and $\widetilde{\kappa}_1, \widetilde{\kappa}_4 \in (0,1)$ such that we have 
\begin{align*}
    \abs{ \mathcal{J}(h)}&\les \bigr(e^{C/{h^{\widetilde{\ell}_1}}} \|\Lambda^{(1)}-\Lambda^{(2)}\|^{\mu_1} +h^{\widetilde{\kappa}_1}+e^{C/{h^{\widetilde{\ell}_4}}} \|\Lambda^{(1)}-\Lambda^{(2)}\|^{\mu_3} +h^{\widetilde{\kappa}_4}\bigr)N_{2m, \varrho}(\ama_1)N_{2m, \overline{\varrho }}(\ama_1^*).
\end{align*}
This completes the proof with $\ell_1=\max(\widetilde{\ell}_1, \widetilde{\ell}_4)\gg 1$, $\mu_4=\min(\mu_1, \mu_3) \in (0,1)$ and $\kappa_4=\min(\widetilde{\kappa}_1, \widetilde{\kappa}_4) \in (0,1)$.
\end{proof}

%%%%%%%%%%%%%%%%%%%%%%%

\subsection{Stability estimate for the vector field (first case \texorpdfstring{$m>2$}{Lg}) }
We derive in this section a stability estimate for the vector field $B$. First, we will use the Hodge decomposition given by \eqref{B} and the integral identity \eqref{4.41} to estimate the Fourier transform of $\curl~ B$. Second, we exploit the boundness of $B$ to prove the stability estimate for $B$ it self.
\smallskip

In what follows, we denote for $B=(B_1, \ldots, B_n)$
\begin{align*}
   \mathrm{b}_{jk}=\partial_{x_j}B_k-\partial _{x_k}B_j ,~~ j, k=1, \ldots, n,
\end{align*}
the components of $\curl~B$ and $\widehat{\mathrm{b}}_{jk}$ the associated Fourier coefficients.\\
Since the functions $\ama_1$ and $\ama_1^*$ are arbitrary solutions of \eqref{3.13} and \eqref{3.14}, respectively, our strategy is to choose suitable solutions $\ama_1$ and $\ama_1^*$ such that, in the first step, the first term in the left hand side of \eqref{4.41} becomes zero in order to obtain an estimate of $\curl~B$. Second, we return to identity \eqref{4.41} and by choosing again a suitable solutions $\ama_1$ and $\ama_1^*$ we may estimate the Fourier transform of $\vartheta_A$ in terms of norm of $B$. We then have the following estimate of $\curl~B$.
\begin{lemma}\label{L4.14}
Let $B$ as above. There exist $\ell_2 \gg 1$ and $\mu_5, \kappa_5 \in (0, 1)$ such that we have the following estimate
  \begin{align}
  \|\curl~B\|_{L^{\infty}(\Omega)} \les e^{C_5/{h^{\ell_2}}} \|\Lambda^{(1)}-\Lambda^{(2)}\|^{\mu_5 } +h^{\kappa_5},
      \label{4.44}
  \end{align}
for any $h >0$ small enough. Here $C_5$ is a positive constant independent of $h$ and $\kappa_5 $, $\ell_2$ depend only on $n$ and $m$.
\end{lemma}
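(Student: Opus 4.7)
The plan is to insert judiciously chosen test functions into the integral identity of Lemma \ref{L4.12} so as to annihilate the $\vartheta_A$-contribution, isolate $\widehat B$ along directions orthogonal to the Fourier variable $\xi$, and then to apply Lemma \ref{LA} to pass from a polynomially-weighted Fourier-side bound to an $L^\infty$ bound on $\curl B$.

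First I would fix $\xi \in \R^n$ and, using that $n\geq 3$, pick unit vectors $\omega,\widetilde\omega\in\mathbb S^{n-1}$ such that $\xi,\omega,\widetilde\omega$ are pairwise orthogonal. Setting $\varrho=\omega+i\widetilde\omega$, I would take $\ama_1(x)=e^{-ix\cdot\xi}$ and $\ama_1^*(x)\equiv 1$, which trivially satisfy \eqref{3.13} and \eqref{3.14}. Since $\varrho\cdot\xi=0$, the function $\varrho\cdot\nabla\ama_1$ vanishes identically; hence the first term in \eqref{4.41} disappears and the identity collapses to $i\,\varrho\cdot\widehat B(\xi)=\mathcal J(h)$. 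Because $(\varrho\cdot\nabla)^{m-1}\ama_1\equiv 0$, one has $N_{2m,\varrho}(\ama_1)\les \langle\xi\rangle^{2m}$ and $N_{2m,\bar\varrho}(\ama_1^*)=O(1)$, so \eqref{4.42} yields
\begin{equation*}
|\varrho\cdot\widehat B(\xi)| \les \langle\xi\rangle^{2m}\bigl(e^{C_4/h^{\ell_1}}\|\Lambda^{(1)}-\Lambda^{(2)}\|^{\mu_4}+h^{\kappa_4}\bigr).
\end{equation*}

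Next I would repeat the construction after replacing $\widetilde\omega$ by $-\widetilde\omega$ (which exchanges $\varrho$ and $\bar\varrho$) to obtain the analogous bound for $|\bar\varrho\cdot\widehat B(\xi)|$; adding the two inequalities gives the same estimate for $|\omega\cdot\widehat B(\xi)|$ for every unit $\omega\in\xi^\perp$. To extract the components of $\curl B$, for each pair $j\neq k$ I would pick the unit vector $\omega_{jk}=(\xi_j e_k-\xi_k e_j)/|\xi_j e_k-\xi_k e_j|$, which lies in $\xi^\perp$, and use
\begin{equation*}
\widehat{\mathrm b}_{jk}(\xi)=i\xi_j\widehat B_k(\xi)-i\xi_k\widehat B_j(\xi)=i\,|\xi_j e_k-\xi_k e_j|\,\omega_{jk}\cdot\widehat B(\xi),
\end{equation*}
to conclude
\begin{equation*}
|\widehat{\mathrm b}_{jk}(\xi)| \les \langle\xi\rangle^{2m+1}\bigl(e^{C_4/h^{\ell_1}}\|\Lambda^{(1)}-\Lambda^{(2)}\|^{\mu_4}+h^{\kappa_4}\bigr).
\end{equation*}

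Finally, since each $\mathrm b_{jk}$ is compactly supported in $\Omega$ and lies in $H^{\sigma_2-1}(\R^n)$ with $\sigma_2-1>n/2$, I would apply Lemma \ref{LA} with $k=0$ and $n_0=2m+1$ to each $\mathrm b_{jk}$ to obtain constants $C_5>0$, $\ell_2\gg 1$ and $\mu_5,\kappa_5\in(0,1)$ (with $\kappa_5,\ell_2$ depending only on $n$ and $m$) such that $\|\mathrm b_{jk}\|_{L^\infty(\Omega)}\les e^{C_5/h^{\ell_2}}\|\Lambda^{(1)}-\Lambda^{(2)}\|^{\mu_5}+h^{\kappa_5}$; summing over $j,k$ then delivers \eqref{4.44}. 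The main bookkeeping obstacle will be preserving the admissible shape of the bound across these reductions: the weight $e^{C_4/h^{\ell_1}}$ inherited from Lemma \ref{L4.12} is already degraded to the scale $h^{\ell_1}$ compared with the sharper $e^{C/h}$ available in Lemmas \ref{L4.5} and \ref{L4.7}, and one must invoke the second (i.e.\ $h^{-n_1}$-absorbing) statement of Lemma \ref{LA} to prevent any further powers of $h^{-1}$ produced during interpolation from spoiling the final exponents $\kappa_5$ and $\mu_5$.
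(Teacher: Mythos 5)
Your proposal matches the paper's proof essentially step for step: both insert $\ama_1(x)=e^{-ix\cdot\xi}$, $\ama_1^*\equiv 1$ into Lemma \ref{L4.12} (using $\varrho\cdot\xi=0$ to kill the $\vartheta_A$ term and so isolate $\varrho\cdot\widehat B(\xi)$), then flip $\widetilde{\omega}\to-\widetilde{\omega}$ and take the special choice $\omega=(\xi_j e_k-\xi_k e_j)/\abs{\xi_j e_k-\xi_k e_j}$ as in the proof of Lemma \ref{L4.7} to bound $\widehat{\mathrm{b}}_{jk}(\xi)$, and finally apply Lemma \ref{LA} with $n_0=2m+1$. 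Your closing remark about needing the second, $h^{-n_1}$-absorbing statement \eqref{1*} of Lemma \ref{LA} is superfluous at this stage, since no fresh negative powers of $h$ appear here; the only adaptation required is the harmless replacement of $e^{C_0/h}$ by $e^{C_0/h^{\ell_1}}$ in the hypothesis of Lemma \ref{LA} (equivalently the substitution $h\mapsto h^{\ell_1}$), which the paper also leaves implicit.
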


\begin{proof}
Let $\xi \in \mathbb{R}^{n}$. We select $\omega, \widetilde{\omega} \in \mathbb{S}^{n-1}$ such that $\xi, \omega$ and $\widetilde{\omega}$ be three mutually orthogonal vectors in $\R^{n}$. Let us choose $\ama_1 (x)=e^{-i x \cdot \xi}$ and $\ama_1^*(x)=1$, $x \in \Omega$. Then $\ama_1 $ and $\ama_1^*$ satisfy respectively \eqref{3.13} and \eqref{3.14}. Using now Lemma \ref{L4.12} and the fact that $\varrho \cdot \xi=0$, we obtain 
\begin{align*}
    &\abs{\varrho \cdot \widehat{B}(\xi)}\les \langle \xi\rangle^{2m}\bigr(e^{C_4/{h^{\ell_1}}}\norm{\Lambda^{(1)}-\Lambda^{(2)}}^{\mu_4 }+h^{\kappa_4 }).
\end{align*}
Then by doing the same analysis as the first part of the proof of Lemma \ref{L4.7}, we get
\begin{align*}
    &\abs{\widehat{\mathrm{b}}_{jk}(\xi)}\les \langle \xi\rangle^{2m+1}\bigr(e^{C_4/{h^{\ell_1}}}\norm{\Lambda^{(1)}-\Lambda^{(2)}}^{\mu_4 }+h^{\kappa_4 }).
\end{align*}
Therefore, by Lemma \ref{LA} there exist $C_5 >0$ and $\mu_5, \kappa_5 \in (0, 1)$ such that we have 
  \begin{align}
  \|\curl~B\|_{L^{\infty}(\Omega)} \les e^{C_5/{h^{\ell_1}}} \|\Lambda^{(1)}-\Lambda^{(2)}\|^{\mu_5 } +h^{\kappa_5}.
  \end{align}
This completes the proof.
\end{proof}

The next step is to estimate the $W^{1, \infty}-$norm of $\vartheta_B$.

\begin{lemma}\label{L4.15}
Let $\vartheta_B$ given by \eqref{B}. There exist $\ell_3 \gg 1$ and $\mu_6$, $\kappa_6 \in (0,1)$ such that we have
\begin{align}\label{4.49}
   \|\vartheta_B\|_{W^{1, \infty}(\Omega)}&\leq e^{C_6/{h^{\ell_3}}} \|\Lambda^{(1)}-\Lambda^{(2)}\|^{\mu_6} +h^{\kappa_6},
\end{align}
for any $h >0$ small enough. Here $C_6$ is a positive constant independent of $h$ and $\kappa_6, \ell_3$ depend on $n$ and $m$.
\end{lemma}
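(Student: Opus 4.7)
The plan is to mimic the argument used in Lemma \ref{L4.11}, but now applied to the Hodge decomposition $B=B'+\nabla\vartheta_B$ of \eqref{B}. The idea is to substitute this decomposition into the integral identity \eqref{4.41} of Lemma \ref{L4.12}, and to choose the amplitudes $\alpha_1,\alpha_1^*$ so that (i) the $\vartheta_A$-term in the left hand side of \eqref{4.41} vanishes, and (ii) after integrating by parts the $\nabla\vartheta_B$-term one recovers the Fourier transform $\widehat{\vartheta_B}(\xi)$ up to a factor one can control.

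Fix $\xi\in\R^n\setminus\{0\}$ and select $\omega,\widetilde{\omega}\in\mathbb{S}^{n-1}$ mutually orthogonal with $\xi\cdot\omega=\xi\cdot\widetilde{\omega}=0$. Put $\varrho=\omega+i\widetilde{\omega}$ (so $\varrho\cdot\varrho=0$ and $\varrho\cdot\overline{\varrho}=2$), and take
\begin{equation*}
\alpha_1(x)=e^{-ix\cdot\xi},\qquad \alpha_1^*(x)=\tfrac{1}{2}\,\varrho\cdot x,\qquad x\in\Omega.
\end{equation*}
Since $\varrho\cdot\xi=0$, one has $\varrho\cdot\nabla\alpha_1=0$, so $(\varrho\cdot\nabla)^m\alpha_1=0$; and $\alpha_1^*$ is affine, so $(\overline{\varrho}\cdot\nabla)^2\alpha_1^*=0$, which gives \eqref{3.14} for every $m\geq 2$. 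Moreover $N_{2m,\varrho}(\alpha_1)\lesssim\langle\xi\rangle^{2m}$ and $N_{2m,\overline{\varrho}}(\alpha_1^*)\lesssim 1$.

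Substituting $B=B'+\nabla\vartheta_B$ into \eqref{4.41}, the first term vanishes because $\varrho\cdot\nabla\alpha_1=0$. For the $\nabla\vartheta_B$-piece, the boundary condition $\vartheta_B=0$ on $\Gamma$ allows integration by parts, and using $\varrho\cdot\nabla\overline{\alpha_1^*}=\tfrac{1}{2}\varrho\cdot\overline{\varrho}=1$ together with $\varrho\cdot\nabla\alpha_1=0$, we get
\begin{equation*}
i\int_\Omega(\varrho\cdot\nabla\vartheta_B)\alpha_1\overline{\alpha_1^*}\,dx
=-i\int_\Omega\vartheta_B\,\varrho\cdot\nabla(\alpha_1\overline{\alpha_1^*})\,dx
=-i\widehat{\vartheta_B}(\xi).
\end{equation*}
Hence \eqref{4.41} reduces to
\begin{equation*}
\widehat{\vartheta_B}(\xi)=\int_\Omega(\varrho\cdot B')\,e^{-ix\cdot\xi}\,\overline{\alpha_1^*}\,dx+i\,\mathcal{J}(h),
\end{equation*}
which, together with $|\overline{\alpha_1^*}|\lesssim 1$ on $\Omega$ and \eqref{4.42}, yields
\begin{equation*}
\bigl|\widehat{\vartheta_B}(\xi)\bigr|\lesssim \|B'\|_{L^\infty(\Omega)}+\langle\xi\rangle^{2m}\bigl(e^{C_4/h^{\ell_1}}\|\Lambda^{(1)}-\Lambda^{(2)}\|^{\mu_4}+h^{\kappa_4}\bigr).
\end{equation*}

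The term $\|B'\|_{L^\infty(\Omega)}$ is controlled by means of the Hodge decomposition: by \eqref{4.36} and Morrey's inequality (applied with some $p>n$), together with $\operatorname{curl} B'=\operatorname{curl} B$, one gets $\|B'\|_{L^\infty(\Omega)}\lesssim\|\operatorname{curl} B\|_{L^\infty(\Omega)}$, and the latter is estimated by Lemma \ref{L4.14}. This produces a bound of the form
\begin{equation*}
\bigl|\widehat{\vartheta_B}(\xi)\bigr|\lesssim \langle\xi\rangle^{2m}\bigl(e^{C/h^{\ell}}\|\Lambda^{(1)}-\Lambda^{(2)}\|^{\mu}+h^{\kappa}\bigr),
\end{equation*}
with $\ell=\max(\ell_1,\ell_2)$, $\mu=\min(\mu_4,\mu_5)$ and $\kappa=\min(\kappa_4,\kappa_5)$. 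Since $\vartheta_B\in H^{\sigma_2+1}(\Omega)$ with $\sigma_2>\tfrac{n}{2}+1$ and $\|\vartheta_B\|_{H^{\sigma_2}(\Omega)}\lesssim M$ by \eqref{vartheta_B1}, one finally invokes Lemma \ref{LA} (after the rescaling $h\mapsto h^{1/\ell}$ needed to absorb the exponent $\ell$ in $e^{C/h^\ell}$) to conclude \eqref{4.49} with suitable $C_6>0$, $\ell_3\gg 1$ and $\mu_6,\kappa_6\in(0,1)$ depending only on $n$ and $m$.

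The only step requiring genuine care is the choice of amplitudes: $\alpha_1^*$ must simultaneously satisfy the transport equation \eqref{3.14} and produce $\varrho\cdot\nabla\overline{\alpha_1^*}=1$ so that integration by parts yields exactly $\widehat{\vartheta_B}(\xi)$; this is why an affine $\alpha_1^*=\tfrac12\varrho\cdot x$ is the natural pick. The remaining ingredients (curl estimate for $B'$, Hodge decomposition bounds, and Lemma \ref{LA} to pass from the Fourier bound to a $W^{1,\infty}$ bound) are all already in place.
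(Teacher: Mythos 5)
Your proof is correct and takes essentially the same route as the paper's: substitute the Hodge decomposition of $B$ into \eqref{4.41} with $\alpha_1=e^{-ix\cdot\xi}$ to kill the $\vartheta_A$-term, integrate by parts using $\vartheta_B|_\Gamma=0$ to produce $\widehat{\vartheta}_B(\xi)$, control $\|B'\|_{L^\infty}$ via the curl estimate of Lemma~\ref{L4.14}, and finish with Lemma~\ref{LA}. The only (cosmetic) difference is your choice $\alpha_1^*=\tfrac12\varrho\cdot x$ giving $\varrho\cdot\nabla\overline{\alpha_1^*}=1$, where the paper uses $\alpha_1^*=-\omega\cdot x$ giving $\varrho\cdot\nabla\overline{\alpha_1^*}=-1$; both are affine, satisfy \eqref{3.14}, and lead to the same bound.
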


\begin{proof}
 We start by estimate the Fourier transform of $\vartheta_B$. Let $\xi \in \R^n$, we choose $\omega, \widetilde{\omega} \in \mathbb{S}^{n-1}$ such that $\xi, \omega$ and $\widetilde{\omega}$ be three mutually orthogonal vectors in $\R^{n}$. Substituting $B$  into the left hand side of \eqref{4.41} then we get for $\ama_1(x)=e^{-i x \cdot \xi}$
\begin{equation}\label{4.50}
\int_\Omega e^{-i x \cdot \xi}(\varrho \cdot \nabla \vartheta_B)  \overline{\ama_1^*} ~dx=-\int_\Omega e^{-i x \cdot \xi} (\varrho \cdot B^{\prime}) \overline{\ama_1^*}~dx + \mathcal{I}(h),
\end{equation}
where $\mathcal{I}(h)$ satisfies 
\begin{align}\label{4.51}
  |  \mathcal{I}(h)| \les  \langle \xi\rangle^{2m} (e^{C_4/h^{\ell_1}} \|\Lambda^{(1)}-\Lambda^{(2)}\|^{\mu_4}+h^{\kappa_4})N_{2m, \overline{\varrho }}(\ama_1^*).
\end{align}
So using integration by parts for the left hand side of the above equality, we obtain
\begin{align*}
- \int_{\Omega} e^{-i x \cdot \xi}  \vartheta_B (\varrho \cdot \nabla \overline{ \ama_1^*})\mathrm{~d} x =- \int_{\Omega} e^{-i x \cdot \xi} (\varrho \cdot B^\prime) \overline{\ama_1^*}\mathrm{~d} x+ \mathcal{I}(h),  
\end{align*}
here we used the fact that $\vartheta_B =0$, on $\Gamma $ and $\omega \cdot \xi=\widetilde{\omega} \cdot \xi=0$. Choosing now $\ama_1^*(x)=-\omega  \cdot x$, $x \in \Omega$,
then by using Lemma \ref{L4.14} , we obtain 
\begin{align*}
    \abs{ \widehat{\vartheta}_B (\xi)}
\les   \langle \xi\rangle^{2m} (e^{C/h^\ell} \|\Lambda^{(1)}-\Lambda^{(2)}\|^{\theta_3}+h^{\theta_4}),
\end{align*}
for some positive constant $C$. Here $\ell=\max(\ell_1, \ell_2) \gg 1$, $\theta_3=\min(\mu_4, \mu_5)\in(0,1)$ and $\theta_4=\min(\kappa_4, \kappa_5)$ $\in (0,1)$ depends only on $n$ and $m$.\\
Therefore, since ${\vartheta}_B$ satisfies \eqref{vartheta_B1} then by using Lemma \ref{LA}, we get our desired estimate.
\end{proof}

%%%%%%%%%%%%%%%%%%%%%%%

We are now in position to prove the stability estimate of the first order coefficient $B$ in the case $m >2$.

\begin{proof}[Proof of Theorem \ref{T1.3} (Case $m >2$)]
Using Lemmas \ref{L4.14}, \ref{L4.15} and the estimate \eqref{B1} we get from \eqref{B} 
\begin{align}\label{norm_B}
\|B \|_{L^{\infty}(\Omega)} 
&\les e^{C/h^\ell} \|\Lambda^{(1)}-\Lambda^{(2)}\|^{\theta_5}+h^{\theta_6},
\end{align}
for some $C >0$. Here $\ell=\max(\ell_2, \ell_3) \gg 1$, $\theta_5=\operatorname{min}(\mu_5, \mu_6) \in (0,1)$ and $\theta_6=\operatorname{min}(\kappa_5, \kappa_6) \in (0,1)$ depends on $n$ and $m$.\\
Let $0 < h_0 <1$, the above statement is valid for all $h < h_0$ and $\|\Lambda^{(1)}-\Lambda^{(2)}\|$ small enough. Then, if  $\|\Lambda^{(1)}-\Lambda^{(2)}\|< \varepsilon_0$, such that $-\frac{\theta_5}{2}\ln \varepsilon_0 \geq  {C/h_0^\ell}$, the taking $h=\bigr(-\frac{1}{C}\ln (\|\Lambda^{(1)}-\Lambda^{(2)}\|^{\frac{\theta_5}{2}})\bigr)^{\frac{-1}{\ell}}$ in \eqref{norm_B} implies
\begin{align}\label{4.22}
    \|B\|_{L^{\infty}(\Omega)} \les   \|\Lambda^{(1)}-\Lambda^{(2)}\|^{\frac{\theta_5}{2}}+  \abs{\ln \|\Lambda^{(1)}-\Lambda^{(2)}\|}^\frac{-\theta_6}{\ell}.
\end{align}
Moreover, with this choice of $h$ we have $0 <h < h_0$.\\
We also observe that this type of inequality holds if $\|\Lambda^{(1)}-\Lambda^{(2)}\|\geq \varepsilon_0$. Since in that case we can simply write
\begin{align}
      \|B\|_{L^{\infty}(\Omega)} \les M \les \frac{M}{(\sqrt{\varepsilon_0})^{\theta_5}}\|\Lambda^{(1)}-\Lambda^{(2)}\|^{\frac{\theta_5}{2}}.
\end{align}
This concludes the proof of stability estimate of first order coefficient $B$ in the case $m >2$.
\end{proof}

%%%%%%%%%%%%%%%%%%%%%%%%%%%%
\subsection{Stability estimate for the symmetric tensor (first case \texorpdfstring{$m>2$}{Lg}) }

The aim of this section is to prove the stable determination of the symmetric tensor $A$ in the case $m>2$. To do this we need to estimate the function $\vartheta_A$, given by \eqref{A}.

%%%%%%%%%%%%%%%%%%%%%%%%%%%%
\begin{lemma}\label{L4.17}
Let $\vartheta_A$ given by \eqref{A}. There exist $\ell_4 \gg 1$ and $\mu_7, \kappa_7 \in (0,1)$ such that we have
\begin{align}\label{4.54}
   \|\vartheta_A\|_{L^{\infty}(\Omega)}&\leq e^{C_7/{h^{\ell_4}}} \|\Lambda^{(1)}-\Lambda^{(2)}\|^{\mu_7} +h^{\kappa_7},
\end{align}
for any $h >0$ small enough. Here $C_7$ is a positive constant independent of $h$ and $\kappa_7, \ell_4$ depend on $n$ and $m$.

\end{lemma}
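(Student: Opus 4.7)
The plan is to isolate $\widehat{\vartheta_A}(\xi)$ from the integral identity \eqref{4.41} by a judicious choice of amplitudes, then to absorb the $B$-term using the intermediate stability estimate for $B$ already obtained in Lemmas \ref{L4.14}--\ref{L4.15}, and finally to pass from the Fourier-side bound to $L^\infty$ via Lemma \ref{LA}.

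First I would fix $\xi\in\R^n$ and choose $\omega,\tild\omega\in\mathbb S^{n-1}$ so that $\xi,\omega,\tild\omega$ are mutually orthogonal. Set $\varrho=\omega+i\tild\omega$ and take
\[
\ama_1(x)=e^{-ix\cdot\xi}(x\cdot\omega),\qquad \ama_1^*(x)=1.
\]
Since $\varrho\cdot\xi=0$ and $\varrho\cdot\omega=1$, a direct computation gives $\varrho\cdot\nabla\ama_1=e^{-ix\cdot\xi}$ and $(\varrho\cdot\nabla)^2\ama_1=0$, so $\ama_1$ solves \eqref{3.13} for every $m\ge 2$; obviously $\ama_1^*$ solves \eqref{3.14}. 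Plugging into Lemma \ref{L4.12} yields
\[
2\,\widehat{\vartheta_A}(\xi)=-i\int_\Omega (\varrho\cdot B)(x)\,e^{-ix\cdot\xi}(x\cdot\omega)\,dx+\mathcal{J}(h),
\]
with $|\mathcal{J}(h)|\les \langle\xi\rangle^{2m+1}\bigl(e^{C_4/h^{\ell_1}}\|\Lambda^{(1)}-\Lambda^{(2)}\|^{\mu_4}+h^{\kappa_4}\bigr)$, since the seminorms $N_{2m,\varrho}(\ama_1)$ and $N_{2m,\overline\varrho}(\ama_1^*)$ grow at most polynomially in $\langle\xi\rangle$.

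Next I would control the $B$-term. Using $|x\cdot\omega|\le \operatorname{diam}(\Omega)$ on $\operatorname{supp}B\subset\Omega$, the first term on the right is bounded by $C\,\|B\|_{L^\infty(\Omega)}$. The proof of Theorem \ref{T1.3} in the case $m>2$, specifically the intermediate inequality \eqref{norm_B} before the logarithmic optimisation, gives
\[
\|B\|_{L^\infty(\Omega)}\les e^{C/h^{\ell}}\|\Lambda^{(1)}-\Lambda^{(2)}\|^{\theta_5}+h^{\theta_6},
\]
for some $\ell\gg 1$ and $\theta_5,\theta_6\in(0,1)$ depending only on $n,m$. Combining this with the bound on $\mathcal{J}(h)$ produces
\[
|\widehat{\vartheta_A}(\xi)|\les \langle\xi\rangle^{2m+1}\bigl(e^{C'/h^{\ell'}}\|\Lambda^{(1)}-\Lambda^{(2)}\|^{\mu'}+h^{\kappa'}\bigr),
\]
with $\ell'=\max(\ell_1,\ell)$, $\mu'=\min(\mu_4,\theta_5)$ and $\kappa'=\min(\kappa_4,\theta_6)$.

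Finally, since $\vartheta_A\in H^{\sigma_1}(\Omega)$ with $\sigma_1>\tfrac{n}{2}+3$ and $\|\vartheta_A\|_{H^{\sigma_1}(\Omega)}\les \|A\|_{H^{\sigma_1}(\Omega)}\le 2M$ by \eqref{A1}, Lemma \ref{LA} applies (with $k=0$, $n_0=2m+1$, and the parameters $\mu',\kappa',C'$ above replacing $\mu_0,\kappa_0,C_0$). Invoking the second conclusion \eqref{1*} of Lemma \ref{LA} (to swallow the $h^{-n_1}$ factor that will reappear if this estimate is iterated into the subsequent potential estimate), one obtains \eqref{4.54} with suitable constants $C_7$, $\ell_4\gg 1$ and exponents $\mu_7,\kappa_7\in(0,1)$ depending only on $n$ and $m$. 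The only genuine obstacle is bookkeeping: the exponent $\ell_4$ must dominate both $\ell_1$ coming from $\mathcal{J}(h)$ and the exponent $\ell$ inherited from the $B$-estimate, and Lemma \ref{LA} must be applied in its amplified form to keep $\kappa_7>0$ after the interpolation.
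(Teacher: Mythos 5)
Your proof follows essentially the same strategy as the paper: extract $\widehat{\vartheta_A}(\xi)$ from the integral identity \eqref{4.41} by choosing amplitudes, bound the $B$-term with the already-established estimate \eqref{norm_B}, and then invoke Lemma~\ref{LA}. The only cosmetic difference is the placement of the oscillation: the paper takes $\ama_1(x)=\omega\cdot x$ and $\ama_1^*(x)=e^{ix\cdot\xi}$ while you put the exponential factor inside $\ama_1$ and take $\ama_1^*=1$; both choices produce exactly $2\widehat{\vartheta_A}(\xi)$ on the left of \eqref{4.41} and the same $B$-integral. One small correction to your bookkeeping: the exponent $\ell_4$ in \eqref{4.54} does not come from invoking \eqref{1*} of Lemma~\ref{LA} --- there is no $h^{-n_1}$ factor to absorb in this step. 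It simply reflects that the input to Lemma~\ref{LA} already has the shape $e^{C'/h^{\ell'}}\|\Lambda^{(1)}-\Lambda^{(2)}\|^{\theta}+h^{\kappa}$ with $\ell'=\max(\ell_1,\ell)$ inherited from the $\mathcal{J}(h)$-bound \eqref{4.42} and from \eqref{norm_B}, so the interpolation step of Lemma~\ref{LA} (applied with $e^{C_0/h}$ replaced by $e^{C_0/h^{\ell'}}$, which the argument plainly tolerates) returns the same power $\ell_4=\ell'$ in the exponent. The amplified form \eqref{1*} is used later, in Lemma~\ref{L3.12}, not here.
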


 \begin{proof}
 Let $\xi \in \mathbb{R}^n$. We fix $\omega, \widetilde{\omega} \in \mathbb{S}^{n-1}$ such that $\xi$, $\omega$ and $\widetilde{\omega}$ be three mutually orthogonal vectors in $\mathbb{R}^n$. Let us choose $\ama_1(x)=\omega \cdot x$, $x \in \Omega$ and $\ama_1^* (x)=e^{i x \cdot \xi}$. It is clear that $\ama_1 $ and $\ama_1^*$ satisfy \eqref{3.13} and \eqref{3.14}, respectively. Using now Lemma \ref{L4.12}, we obtain 
\begin{align*}
    &\abs{\widehat{ \vartheta}_A(\xi) }
    \les \langle \xi\rangle^{2m}\bigr(e^{C_4/{h^{\ell_1}}}\norm{\Lambda^{(1)}-\Lambda^{(2)}}^{\mu_4}+h^{\kappa_4}+\|B\|_{L^\infty(\Omega)}\bigr).
\end{align*}
Therefore, \eqref{norm_B} implies 
\begin{align*}
    &\abs{\widehat{ \vartheta}_A(\xi) }
    \les \langle \xi\rangle^{2m}(e^{C^\prime/h^{\ell^\prime}}\norm{\Lambda^{(1)}-\Lambda^{(2)}}^{\theta_7}+h^{\theta_8}),
\end{align*}
for some $C^\prime >0$. Here $\ell^\prime=\max(\ell_1, \ell)$, $\theta_7=\min(\mu_4, \theta_5)$ and $\theta_8=\min(\kappa_4, \theta_6)$.\\
Hence, since $\vartheta_A$ satisfies \eqref{A1} then by using Lemma \ref{LA} we deduce \eqref{4.54}. This ends the proof.
\end{proof}

%%%%%%%%%%%%%%%%%%%%

\begin{proof}[Proof of Theorem \ref{T1.2} (Case $m >2$)]
By using Lemmas \ref{L4.5}, \ref{L4.11} and \ref{L4.17}, we get from the equality \eqref{A}
\begin{align}\label{normA}
  \|A\|_{L^{\infty}(\Omega)}&\leq e^{C/{h^{\ell_4}}} \|\Lambda^{(1)}-\Lambda^{(2)}\|^{\theta_9} +h^{\theta_{10}},   
\end{align}
for some positive constant $C$. Here $\theta_9=\min(\mu_1, \mu_3, \mu_7)$ and $\theta_{10}=\min(\kappa_1, \kappa_3, \kappa_7)$ depends only on $n$ and $m$.\\
Choosing now $h=\bigr(-\frac{1}{C}\ln (\|\Lambda^{(1)}-\Lambda^{(2)}\|^{\frac{\theta_9}{2}})\bigr)^{\frac{-1}{\ell_4}}$ and proceeding exactly as the last part of proof of Theorem \ref{T1.3} (Case $m >2$)
we deduce the stability for the second order perturbation in the case $m >2$.
\end{proof}

%%%%%%%%%%%%%%%%%%%%%%%%%%%%%%%%%%%
\subsection{Stability estimate for the electric potential (first case \texorpdfstring{$m>2$}{Lg}) }

We will now establish the stability estimate for the zeroth order term. Let us first estimate the Fourier transform of zeroth-order perturbations as follow.

%%%%%%%%%%%%%%%%%%%%

\begin{lemma}\label{L3.12}
Let $q \in \mathcal{Q}(M)$. For all $\xi \in \mathbb{R}^n$, there exist $\ell_5 \gg 1$ and $\mu_8, \kappa_8 \in (0,1)$ such that the following estimate
\begin{align}\label{3.63}
  |\widehat{q}(\xi)| \les \langle \xi\rangle^{2m}   \bigr(e^{C_8/{h^{\ell_5}} }\|\Lambda^{(1)}-\Lambda^{(2)}\|^{\mu_8}+h^{\kappa_8}\bigr),
\end{align}
holds for any $h >0$ small enough. Here $C_8$ is a positive constant independent of $h$ and $\kappa_8, \ell_5$ depend on $n$ and $m$.
\end{lemma}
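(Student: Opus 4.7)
The plan is to return to the unscaled integral identity \eqref{3.6} and extract the coefficient of $\widehat{q}(\xi)$, which sits two orders of $h$ below the leading $A\varrho\cdot\varrho$ contribution used in the proofs of Lemmas \ref{L3.2}--\ref{L3.3}. Concretely, fix $\xi\in\mathbb{R}^n$, choose $\omega,\widetilde\omega\in\mathbb{S}^{n-1}$ so that $\xi,\omega,\widetilde\omega$ are mutually orthogonal, and set $\ama_1(x)=e^{-ix\cdot\xi}$, $\ama_1^*(x)=1$; these trivially satisfy the transport equations \eqref{3.13}--\eqref{3.14} since $\varrho\cdot\xi=0$. Build the CGO solutions $u$ and $u^*$ of Lemmas \ref{L2.6} and \ref{L2.7}. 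In the product $u\overline{u^*}$ the exponentials cancel exactly, and the identity of Lemma \ref{L3.1} multiplied by $h^2$ takes, after the usual conjugation calculation, the form
\begin{equation*}
\int_\Omega \bigl(-A\varrho\cdot\varrho - 2ih\,A\varrho\cdot D + h^2 AD\cdot D - ih\,B\cdot\varrho + h^2 B\cdot D + h^2 q\bigr)v\,\bar v^*\,dx = h^2\bigl\langle(\Lambda^{(2)}-\Lambda^{(1)})f,(f^*)^t\bigr\rangle,
\end{equation*}
with $v=\ama_1+h\ama_2+r$ and $v^*=\ama_1^*+h\ama_2^*+r^*$.

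Expanding the left-hand side in powers of $h$ (using \eqref{2.28} and \eqref{2.41} to absorb $r,r^*$ into an $O(h^2)$ remainder in $H^{2m}$) yields a schematic decomposition of the form
\begin{equation*}
-\widehat{A\varrho\!\cdot\!\varrho}(\xi)\; +\; h\,\mathrm{I}_1(\xi)\; +\; h^2\bigl[\widehat{q}(\xi)+\mathrm{I}_2(\xi)\bigr]\; +\; O(h^3),
\end{equation*}
where $\mathrm{I}_1(\xi)$ is bounded by $C\langle\xi\rangle^{2m}(\|A\|_{L^\infty}+\|B\|_{L^\infty})$ and $\mathrm{I}_2(\xi)$ by $C\langle\xi\rangle^{2m}(\|A\|_{L^\infty}+\|B\|_{L^\infty}+M)$, both through trivial Fourier bounds of the type $|\widehat{A\varrho\!\cdot\!\varrho}(\xi)|\les\|A\|_{L^\infty}$ and Sobolev bounds on $\ama_2,\ama_2^*$ via \eqref{2.50}--\eqref{2.53} (note that $(\varrho\cdot\nabla)^{m-1}\ama_1=0$ since $\varrho\cdot\xi=0$, so $N_{2m,\varrho}(\ama_1)\les\langle\xi\rangle^{2m}$). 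Meanwhile, Lemma \ref{L0.1} controls the right-hand side by $h^2\,e^{C/h}\|\Lambda^{(1)}-\Lambda^{(2)}\|\langle\xi\rangle^{2m}$. Solving for $\widehat{q}(\xi)$ and dividing by $h^2$ produces the key inequality
\begin{equation*}
|\widehat{q}(\xi)|\les \langle\xi\rangle^{2m}\Bigl(e^{C/h}\|\Lambda^{(1)}-\Lambda^{(2)}\| + h^{-2}\|A\|_{L^\infty(\Omega)} + h^{-1}\|B\|_{L^\infty(\Omega)} + h\Bigr).
\end{equation*}

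The main obstacle is now the singular pre-factor $h^{-2}$ in front of $\|A\|_{L^\infty}$ (and $h^{-1}$ in front of $\|B\|_{L^\infty}$), which would otherwise destroy the polynomial-in-$h$ tail governing the final logarithmic modulus. This is precisely what the strengthened conclusion \eqref{1*} of Lemma \ref{LA} was designed to handle: applied with $n_1=2$ to the bound \eqref{normA} on $A$ obtained in the proof of Theorem \ref{T1.2}, and with $n_1=1$ to the bound \eqref{norm_B} on $B$ from Theorem \ref{T1.3}, it absorbs the singular pre-factors into expressions of the form $e^{\widetilde C/h^{\widetilde\ell}}\|\Lambda^{(1)}-\Lambda^{(2)}\|^{\widetilde\mu}+h^{\widetilde\kappa}$, with possibly larger $\widetilde\ell$ but new exponents $\widetilde\mu,\widetilde\kappa\in(0,1)$ depending only on $n$ and $m$. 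Combining these with the $e^{C/h}\|\Lambda^{(1)}-\Lambda^{(2)}\|$ and $h$ terms above, taking the maximum of the exponential exponents and the minimum of the power-of-$\Lambda$ and power-of-$h$ exponents, yields the claimed bound \eqref{3.63}.
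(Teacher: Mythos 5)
Your proposal is correct and follows essentially the same route as the paper's proof: you plug the CGO solutions from Lemmas~\ref{L2.6}--\ref{L2.7} with $\ama_1(x)=e^{-ix\cdot\xi}$, $\ama_1^*=1$ into the integral identity of Lemma~\ref{L3.1}, isolate the $\widehat q(\xi)$ contribution, arrive at exactly the key inequality
\[
|\widehat q(\xi)|\les\langle\xi\rangle^{2m}\bigl(h^{-2}\|A\|_{L^\infty(\Omega)}+h^{-1}\|B\|_{L^\infty(\Omega)}+e^{C/h}\|\Lambda^{(1)}-\Lambda^{(2)}\|+h\bigr),
\]
which is \eqref{3.64}, and then use \eqref{normA}, \eqref{norm_B} together with the strengthened conclusion \eqref{1*} of Lemma~\ref{LA} to absorb the singular $h^{-2}$ and $h^{-1}$ pre-factors. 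The only cosmetic difference is that you frame the derivation as multiplying the identity by $h^2$, expanding in powers of $h$, and then dividing back out, whereas the paper works directly from the unscaled identity \eqref{3.6} and bounds the $A$- and $B$-terms as $\mathcal{J}_2(h)$ in \eqref{3.62}; the resulting estimate and the way the exponents $\ell_5,\mu_8,\kappa_8$ are assembled by taking maxima/minima are identical.
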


\begin{proof}
Substituting $u$ and $u^*$ into the left hand side of \eqref{3.6}. Then the third term on the left hand side of \eqref{3.6} becomes
\begin{align*}
   \int_{\Omega} q(x) u(x) \overline{u^*}(x)~dx=  \int_{\Omega} q(x) \ama_1(x) \overline{\ama_1^*}(x)~dx + \mathcal{J}_1(h),
\end{align*}
where 
\begin{align}
 \mathcal{J}_1(h)=   \int_{\Omega} q(x) (h\ama_2+r) \overline{(\ama_1^*+h\ama_2^*+r^*)}~dx+\int_{\Omega} q(x) \ama_1 \overline{(h\ama_2^*+r^*)}~dx.
\end{align}
We deduce from the Cauchy-Schwartz inequality and from the inequalities (\ref{3.12}), \eqref{3.12'}, \eqref{2.50} and \eqref{2.53} that
\begin{align}\label{3.61}
    \abs{ \mathcal{J}_1(h)}\les hN_{2m, \varrho}(\ama_1)N_{2m, \overline{\varrho }}(\ama_1^*).
\end{align}
In the same manner, we get
\begin{align}\label{3.62}
 &\int_{\Omega} (AD\cdot D u+ B\cdot Du) \overline{u^*}~dx  \cr
 &= \int_{\Omega}\bigr((-h^{-2}A\varrho\cdot \varrho-\frac{2i}{h}A\varrho \cdot D+AD\cdot D-\frac{i}{h} \varrho \cdot B +B \cdot D
 )(\ama_1+h\ama_2+r)\bigr)\overline{(\ama_1^*+h\ama_2^*+r^*)}dx\cr
 &:=\mathcal{J}_2(h),
\end{align}
where $\mathcal{J}_2(h)$ satisfies
\begin{align*}
  \abs{\mathcal{J}_2(h)}\les \bigr(h^{-2}\|A\|_{L^{\infty}}+ h^{-1}\|B\|_{L^{\infty}}\bigr)N_{2m, \varrho}(\ama_1)N_{2m, \overline{\varrho }}(\ama_1^*).  
\end{align*}
Therefore, using the estimates obtained above and the equality \eqref{3.6}, we deduce 
\begin{align}\label{3.59}
 \abs{ \int_{\Omega} q(x) \ama_1(x) \overline{\ama_1^*}(x)~dx} 
&\les  \bigr(h^{-2}\|A\|_{L^{\infty}(\Omega)}+h^{-1}\|B\|_{L^{\infty}(\Omega)}\cr &\qquad+e^{C/h }\|\Lambda^{(1)}-\Lambda^{(2)}\| +h\bigr) N_{2m, \varrho}(\ama_1)N_{2m, \overline{\varrho }}(\ama_1^*),
\end{align}
Let now $\xi \in \mathbb{R}^n$. We fix $\omega, \widetilde{\omega} \in \mathbb{S}^{n-1}$ such that $\xi$, $\omega$ and $\widetilde{\omega}$ be three mutually orthogonal vectors in $\mathbb{R}^n$. Choosing $\ama_1(x)=e^{-ix \cdot \xi}$ and $\ama_1^*(x)=1$ in \eqref{3.59} we get
\begin{align}\label{3.64}
  \abs{\widehat{q}(\xi)} \les \langle \xi\rangle^{2m}  \bigr(h^{-2}\|A\|_{L^{\infty}(\Omega)}+h^{-1}\|B\|_{L^{\infty}(\Omega)}+e^{C/h }\|\Lambda^{(1)}-\Lambda^{(2)}\|+h\bigr) .
\end{align}
This along with \eqref{norm_B}, \eqref{normA} and \eqref{1*} show that 
\begin{align*}
 \abs{\widehat{q}(\xi)} &\les \langle \xi\rangle^{2m} \bigr(e^{C_8/{h^{\widetilde{\ell}_4}} }\|\Lambda^{(1)}-\Lambda^{(2)}\|^{\theta_9}+h^{\widetilde{\theta}_{10}}+e^{C_8/{h^{\widetilde{\ell}}} }\|\Lambda^{(1)}-\Lambda^{(2)}\|^{\theta_5}+h^{\widetilde{\theta}_{6}}\bigr),
\end{align*}
for some $C_8 >0$ is a positive constant independent of $h$. Here $\widetilde{\ell}, \widetilde{\ell}_4 \gg 1$ and $\widetilde{\theta}_{6}, \widetilde{\theta}_{10} \in (0,1)$ depend on $n$ and $m$. Then, by choosing $\ell_5=\max(\widetilde{\ell}_4, \widetilde{\ell}) $, $\kappa_8=\min(\widetilde{\theta}_{10}, \widetilde{\theta}_{6})$ and $\mu_8=\min(\theta_5, \theta_9)$ we get our desired estimate.
\end{proof}

With the help of the above Lemma, we may now prove the stability estimate of the potential.

\begin{proof}[Proof of Theorem \ref{T1.4} (Case $m>2$)]

Let $R >1$, to be chosen later, we get from Lemma \ref{L3.12}
\begin{align}
\|q \|_{H^{-1}(\Omega)}^{2} &\leq \|q \|_{H^{-1}\left(\mathbb{R}^{n}\right)}^{2}=\int_{\langle \xi\rangle \leqslant R}\abs{\widehat{q}(\xi)}^{2}\langle \xi\rangle^{-2} d \xi+\int_{\langle \xi\rangle> R}\abs{\widehat{q}(\xi)}^{2}\langle \xi\rangle^{-2} d \xi \cr 
&\les R^{n+4m-2}\bigr(e^{{2C_8}/{h^{\ell_5}} }\|\Lambda^{(1)}-\Lambda^{(2)}\|^{2\mu_8}+h^{2\kappa_8}\bigr)+ M^2 R^{-2}.\label{3.650}
\end{align}
Choosing $R=h^\frac{-2\kappa_8}{n+4m}$ then, for $h_0 >0$ sufficiently small we obtain
\begin{align}
 \|q\|_{H^{-1}(\Omega)} \les e{^{C^\prime/h^{\ell_5}}}\|\Lambda^{(1)}-\Lambda^{(2)}\|^{\mu_8}+  h^\frac{2 \kappa_8}{n+4m},\label{3.660}
\end{align}
for some positive constant $C^{'}$ and all $h<h_0$.\\
Let $\varepsilon_0 >0$ such that $- \frac{\mu_8}{2}\ln \varepsilon_0 \geq  {C^\prime/h_0^{\ell_5}}$. If $\|\Lambda^{(1)}-\Lambda^{(2)}\|< \varepsilon_0$, we choose  $h=\bigr(-\frac{1}{C^{'}}\ln( \|\Lambda^{(1)}-\Lambda^{(2)}\|^{\frac{\mu_8}{2}})\bigr)^{-\frac{1}{\ell_5}}$ in \eqref{3.660}, thus, we obtain $0 < h \leq h_0$ and 
\begin{align}
     \|q\|_{H^{-1}(\Omega)} \les   \|\Lambda^{(1)}-\Lambda^{(2)}\|^{\frac{\mu_8}{2}}+  \abs{\ln \|\Lambda^{(1)}-\Lambda^{(2)}\|}^\frac{-2 \kappa_8}{(n+4m)\ell_5}.
\end{align}
Now if $\|\Lambda^{(1)}-\Lambda^{(2)}\|\geq \varepsilon_0$, we also observe that this type of inequality holds. Indeed, we can simply write
\begin{align}
     \|q \|_{H^{-1}(\Omega)} \les  M \les \frac{M}{\sqrt{\varepsilon_0}^{\mu_8}}\|\Lambda^{(1)}-\Lambda^{(2)}\|^{\frac{\mu_8}{2}}.
\end{align}
This concludes the proof for stability of zeroth order coefficient, in the case $m
>2$.
\end{proof}

\section{Stability estimates (second case \texorpdfstring{$m=2$}{Lg})} \label{section5}
The aim of this section is to proving the stable determination of the isotropic tensor $A=a ~id$, the vector field $B$ and the electric potential $q$ from the Dirichlet to Neumann map $\Lambda_{A,B,q}$, in the case $m=2$. The proof follows from similar arguments we used in the previous section.\smallskip\\
We consider for $j=1,2,$ $A^{(j)}=a^{(j)}\operatorname{id} \in \mathcal{A}_{\sigma_1}(M) \cap \mathcal{E}^\prime(\Omega)$, $B^{(j)} \in \mathcal{B}_{\sigma_2}(M) \cap \mathcal{E}^\prime(\Omega)$ and $q^{(j)} \in \mathcal{Q}(M)$ pair of admissible coefficients. We denote
\begin{align}
    a=a^{(2)}-a^{(1)},\quad B=B^{(2)}-B^{(1)}\quad\text{and  }~~~~ q=q^{(2)}-q^{(1)}.
\end{align}

\subsection{Stability estimate for the vector field (second case \texorpdfstring{$m=2$}{Lg})}

We derive in this section stability estimate for the vector field $B$ in the case $m=2$. As section \ref{section4}, we use the Hodge decomposition for a vector field $B$ and we write
\begin{align}
   B&=B^\prime+\nabla \vartheta_B,\label{6.0}  
\end{align}
where the vector field $B^{\prime} \in L^{ \infty}(\Omega, \mathbb{C}^n)$ satisfies \eqref{4.100} and 
\begin{align}\label{6.1}
    \|B^\prime\|_{L^{\infty}(\Omega)} \les \|\curl~B\|_{L^{\infty}(\Omega)}.
\end{align}
Moreover, the function $\vartheta_B \in W^{1, \infty}(\Omega, \C)$ satisfies 
\begin{align}\label{6.2}
     \| \vartheta_B\|_{H^{\sigma_2}(\Omega)} \les \|B\|_{H^{\sigma_2}(\Omega)},
\end{align}
here $\sigma_2> \frac{n}{2}+1$.\medskip\\
We start by estimate the $L^{\infty}-$norm of the vector field $B^\prime$.
\begin{lemma}\label{'T3.5}
Let $B^\prime$ given by \eqref{6.0}. Then there exist  $\mu_9, \kappa_9 \in (0, 1)$ such that we have the following estimate
  \begin{align}
  \|B^\prime\|_{L^{\infty}(\Omega)} \les e^{C_9/{h}} \|\Lambda^{(1)}-\Lambda^{(2)}\|^{\mu_9} +h^{\kappa_9},
      \label{'3.27}
  \end{align}
for any $h >0$ small enough. Here $C_9$ is a positive constant independent of $h$ and $\kappa_9 $ depends only on $n$.
\end{lemma}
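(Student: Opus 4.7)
The plan is to mirror the argument of Lemma \ref{L4.7} from the case $m>2$, but working with the simpler integral identity \eqref{3.25} that is specific to $m=2$. Since $A = a\,\operatorname{id}$ is isotropic in this regime, we have $A\varrho\cdot\varrho = a(\varrho\cdot\varrho) = 0$ and $2A\varrho\cdot\nabla\ama_1 = 2a(\varrho\cdot\nabla\ama_1)$, so the only source of obstruction to isolating the curl of $B$ is the gradient term involving $a$, which can be killed by a careful choice of $\ama_1$.

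First, fix $\xi\in\R^n$ and choose $\omega,\widetilde{\omega}\in\s^{n-1}$ so that $\xi$, $\omega$, $\widetilde{\omega}$ are mutually orthogonal, set $\varrho = \omega+i\widetilde{\omega}$, and take the C.G.O.\ amplitudes
\begin{equation*}
\ama_1(x) = e^{-ix\cdot\xi},\qquad \ama_1^*(x) = 1,
\end{equation*}
which satisfy \eqref{3.13} and \eqref{3.14} respectively. Because $\varrho\cdot\xi = 0$, we have $\varrho\cdot\nabla\ama_1 = -i(\varrho\cdot\xi)e^{-ix\cdot\xi} = 0$, so the $A$–contribution in \eqref{3.25} disappears and Lemma \ref{L3.3} reduces to
\begin{equation*}
i\,\varrho\cdot\widehat{B}(\xi) = \mathcal{I}_1(h),\qquad |\mathcal{I}_1(h)| \les \langle\xi\rangle^{4}\bigl(e^{C/h}\|\Lambda^{(1)}-\Lambda^{(2)}\| + h\bigr).
\end{equation*}
Replacing $\widetilde{\omega}$ by $-\widetilde{\omega}$ yields the same bound for $\overline{\varrho}\cdot\widehat{B}(\xi)$, and adding the two estimates gives $|\omega\cdot\widehat{B}(\xi)|\les \langle\xi\rangle^{4}(e^{C/h}\|\Lambda^{(1)}-\Lambda^{(2)}\|+h)$.

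Next, for any pair $j\neq k$ in $\{1,\ldots,n\}$, specialise $\omega$ to the unit vector in the direction of $\xi_j e_k - \xi_k e_j$ (which is automatically orthogonal to $\xi$). Multiplying the previous inequality by $|\xi_j e_k - \xi_k e_j|$ we obtain
\begin{equation*}
|\widehat{\mathrm{b}}_{jk}(\xi)|\les \langle\xi\rangle^{5}\bigl(e^{C/h}\|\Lambda^{(1)}-\Lambda^{(2)}\|+h\bigr),
\end{equation*}
exactly the type of Fourier bound that Lemma \ref{LA} is designed to handle. Since $\curl B$ lies in $\mathcal{E}'(\Omega)$ and inherits an $H^{\sigma_2-1}$ bound from $\|B\|_{H^{\sigma_2}}\le M$, Lemma \ref{LA} supplies constants $\mu_9,\kappa_9\in(0,1)$ (with $\kappa_9$ depending only on $n$) and $C_9>0$ such that
\begin{equation*}
\|\curl B\|_{L^\infty(\Omega)}\les e^{C_9/h}\|\Lambda^{(1)}-\Lambda^{(2)}\|^{\mu_9} + h^{\kappa_9}.
\end{equation*}
Finally, the Hodge/Morrey estimate \eqref{6.1} gives $\|B^\prime\|_{L^\infty(\Omega)}\les \|\curl B\|_{L^\infty(\Omega)}$, and the desired inequality \eqref{'3.27} follows.

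I do not expect a real obstacle here: the main simplification relative to the case $m>2$ is that the isotropy of $A$ eliminates both the $h^{-1}$ principal term and the divergence-free tensor $A^\prime$, so no preliminary estimate on a second-order coefficient needs to be fed in. The only care required is to pick $\ama_1$ so that $\varrho\cdot\nabla\ama_1$ vanishes pointwise (killing the $a$-term), and to verify that the simpler bound from Lemma \ref{L3.3} with the $m=2$ exponents still lines up with the hypotheses of Lemma \ref{LA} with $n_0 = 5$ and a single application of the exponential parameter (no $h^{-\ell}$ blow-up is needed, so we remain in the regime \eqref{1} rather than \eqref{1*}).
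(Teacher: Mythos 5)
Your proof is correct and follows the same approach as the paper: choose $\ama_1(x)=e^{-ix\cdot\xi}$, $\ama_1^*=1$ with $\xi,\omega,\widetilde\omega$ mutually orthogonal (so the $a$-term in \eqref{3.25} vanishes because $\varrho\cdot\nabla\ama_1=0$), obtain the Fourier bound on $\varrho\cdot\widehat B$ from Lemma \ref{L3.3}, pass to $\curl B$ by the $\pm\widetilde\omega$ trick and the choice $\omega=(\xi_j e_k-\xi_k e_j)/|\xi_j e_k-\xi_k e_j|$, then invoke Lemma \ref{LA} and the Morrey-type bound \eqref{6.1}. The paper states this more tersely, simply citing ``the same analysis as the first part of the proof of Lemma \ref{L4.7}''; you have spelled out those steps, and your remark that one stays in the regime \eqref{1} (no $h^{-\ell}$ amplification) is also consistent with the paper, since for $m=2$ no preliminary coefficient estimates need to be fed in.
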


\begin{proof}
Let $\xi \in \mathbb{R}^n$, choosing $\omega, \widetilde{\omega} \in \mathbb{S}^{n-1}$ such that $\xi, \omega$ and $\widetilde{\omega}$ be three mutually orthogonal vectors in $\R^n$. Let us choose $\ama_1 (x)=e^{-i x \cdot \xi}$ and $\ama_1^*(x)=1$, $x \in \Omega$ being a solutions of \eqref{3.13} and \eqref{3.14}, respectively. Using now Lemma \ref{L3.3} and the fact that $\varrho \cdot \xi=0$, we obtain
\begin{equation}
\abs{\varrho \cdot \widehat{B}(\xi)}\les \langle \xi\rangle^{4}(e^{C/h}\norm{\Lambda^{(1)}-\Lambda^{(2)}}+h).\label{'3.24}
\end{equation}
Doing the same analysis as the first part of proof of Lemma \ref{L4.7}, we get
\begin{align*}
     \|\curl~B\|_{L^{\infty}(\Omega)} \les e^{C_9/{h}} \|\Lambda^{(1)}-\Lambda^{(2)}\|^{\mu_9} +h^{\kappa_9},
\end{align*}
for some positive constant $C_9$ and $\mu_9, \kappa_9 \in (0,1)$. Now, by using \eqref{6.1} we get our desired estimate.
\end{proof}
%%%%%%%%%%%%%%%%%%%%%%%%%%%
With the help of above Lemma, we may estimate $\vartheta_B$ given by \eqref{6.0}. 
%%%%%%%%%%%%%%%%%%%%%%%%%%%%%%%%%%%%
\begin{lemma}
Let $\vartheta_B$ given by \eqref{6.0}. There exist $\mu_{10}, \kappa_{10} \in (0, 1)$ such that we have
\begin{align*}
 \|\vartheta_B \|_{W^{1, \infty}(\Omega)} \les    e^{C_{10}/{h}} \|\Lambda^{(1)}-\Lambda^{(2)}\|^{\mu_{10}} +h^{\kappa_{10}}.
\end{align*}
Here $C_{10}$ is a positive constant independent of $h$ and $\kappa_{10} \in (0,1)$ depends only on $n$.
\label{'P3.7}
\end{lemma}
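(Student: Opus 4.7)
The plan is to run the argument of Lemma \ref{L4.15} with the appropriate integral identity for $m = 2$. Concretely, I would start from the identity \eqref{3.25} of Lemma \ref{L3.3},
\begin{equation*}
\int_\Omega \bigr(2 a\, \varrho \cdot \nabla \ama_1 + i\, \varrho \cdot B\, \ama_1\bigr) \overline{\ama_1^*}\, dx = \mathcal{I}_1(h),
\end{equation*}
(recall that $A = a\,\operatorname{id}$ in the case $m=2$, so the term $A\varrho\cdot\varrho$ is absent), and insert the Hodge decomposition $B = B' + \nabla \vartheta_B$ from \eqref{6.0}. An integration by parts in $\int_\Omega (\varrho \cdot \nabla \vartheta_B)\, \ama_1 \overline{\ama_1^*}\, dx$, which is legitimate thanks to the boundary condition $\vartheta_B|_\Gamma = 0$, then expresses a weighted Fourier-type integral of $\vartheta_B$ in terms of a $B'$ integral, an $a$ integral, and the error $\mathcal{I}_1(h)$.

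Next, I would fix an arbitrary $\xi \in \R^n$ and select $\omega, \widetilde{\omega} \in \s^{n-1}$ with $\xi, \omega, \widetilde{\omega}$ pairwise orthogonal (possible since $n \geq 3$), setting $\varrho = \omega + i\widetilde{\omega}$. Testing with $\ama_1(x) = e^{-i x \cdot \xi}$ and $\ama_1^*(x) = -\omega \cdot x$ satisfies \eqref{3.13} and \eqref{3.14} respectively, because $\varrho \cdot \xi = 0$ and $(\overline{\varrho}\cdot\nabla)^2(\omega\cdot x) = 0$. For these choices, the identity $\varrho \cdot \nabla \ama_1 = -i(\varrho\cdot\xi)\ama_1 = 0$ kills the $a$-contribution, while $\varrho \cdot \nabla \overline{\ama_1^*} = -\varrho \cdot \omega = -1$ exactly extracts $\widehat{\vartheta}_B(\xi)$, leading to
\begin{equation*}
|\widehat{\vartheta}_B(\xi)| \les \langle \xi \rangle^4 \bigr(\|B'\|_{L^\infty(\Omega)} + e^{C/h}\|\Lambda^{(1)}-\Lambda^{(2)}\| + h\bigr).
\end{equation*}
Using Lemma \ref{'T3.5} to replace $\|B'\|_{L^\infty(\Omega)}$ by $e^{C_9/h}\|\Lambda^{(1)}-\Lambda^{(2)}\|^{\mu_9} + h^{\kappa_9}$ yields a pointwise Fourier estimate of the form \eqref{0} appearing in the hypothesis of Lemma \ref{LA}.

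Finally, since $\vartheta_B$ satisfies the a priori bound \eqref{6.2} with $\sigma_2 > \tfrac{n}{2} + 1$, an application of Lemma \ref{LA} with $k = 1$ and $n_0 = 4$ converts the Fourier bound into the desired $W^{1,\infty}$-estimate, producing exponents $\mu_{10}, \kappa_{10} \in (0,1)$ depending only on $n$. The main obstacle is really only careful bookkeeping: verifying that the selected $\ama_1, \ama_1^*$ lie in the kernels of the relevant transport operators, that the integration by parts leaves no boundary contribution, and that the various constants combine consistently. Once these checks are in place, the proof reduces entirely to the Fourier-to-Sobolev machinery already used in Lemma \ref{L4.15}.
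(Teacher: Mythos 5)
Your proposal is correct and follows essentially the same route as the paper's own proof: insert the Hodge decomposition $B = B' + \nabla\vartheta_B$ into \eqref{3.25}, pick $\ama_1(x) = e^{-ix\cdot\xi}$ so that the $a$-term dies via $\varrho\cdot\xi = 0$, integrate by parts using $\vartheta_B|_\Gamma = 0$, take $\ama_1^*(x) = -\omega\cdot x$ to extract $\widehat{\vartheta}_B(\xi)$, invoke Lemma~\ref{'T3.5} to absorb $\|B'\|_{L^\infty}$, and finish with Lemma~\ref{LA} together with the a priori bound \eqref{6.2}. The details you flag (transport-equation membership, vanishing boundary contribution, bookkeeping of constants) are exactly the checks the paper carries out, and you have verified them correctly.
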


\begin{proof}
Let $\xi \in \mathbb{R}^{n}$. We choose $\omega, \widetilde{\omega} \in \mathbb{S}^{n-1}$ such that $\xi, \omega$ and $\widetilde{\omega}$ be three mutually orthogonal vectors in $\R^{n}$. Substituting $B$  into the left hand side of \eqref{3.25}, we get for $\ama_1(x)=e^{-i x \cdot \xi}$
\begin{equation}\label{'3.33}
\int_\Omega e^{-i x \cdot \xi} (\varrho \cdot \nabla \vartheta_B) \overline{\ama_1^*} ~dx=-\int_\Omega e^{-i x \cdot \xi} (\varrho \cdot B^{'} )\overline{\ama_1^*}~dx + \mathcal{I}(h),
\end{equation}
where $\mathcal{I}$ satisfies 
\begin{align}\label{'3.34}
  |  \mathcal{I}(h)| \les  \langle \xi\rangle^4(e^{C/h} \|\Lambda^{(1)}-\Lambda^{(2)}\|+h)N_{4,  \overline{\varrho}}(\ama_1^*).
\end{align}
We apply now an integration by parts for the left hand side of the above equality, we obtain
\begin{align*}
- \int_{\Omega} e^{-i x \cdot \xi}  \vartheta_B  (\varrho \cdot \nabla \overline{ \ama_1^*})\mathrm{~d} x =- \int_{\Omega} e^{-i x \cdot \xi}(\varrho \cdot B^\prime)  \overline{\ama_1^*}\mathrm{~d} x+ \mathcal{I}(h),  
\end{align*}
here we used the fact that $\vartheta_B =0$, on $\Gamma $ and $\varrho \cdot \xi=0$.\\
Therefore, we obtain for $\ama_1^*(x)=-\omega  \cdot x$, $x \in \Omega$, that
\begin{align*}
    \abs{ \widehat{\vartheta}_B(\xi)}
\les   \langle \xi\rangle^4 \bigr(\|B^\prime \|_{L^{\infty}(\Omega)}+e^{C/h}\|\Lambda^{(1)}-\Lambda^{(2)}\|+h \bigr).
\end{align*}
This inequality and Lemma \ref{'T3.5} allow us to obtain for some constant $C^\prime >0$ that
\begin{align*}
 \abs{ \widehat{\vartheta}_B(\xi)} &\les  \langle \xi\rangle^4 (e^{C^\prime/h}\|\Lambda^{(1)}-\Lambda^{(2)}\|^{\mu_9}+h^{\kappa_9}),
\end{align*}
here $\kappa_9 \in (0,1)$ and $\mu_9=\mu_9(n) \in (0,1)$.\\
In the other hand, since ${\vartheta}_B$ satisfies \eqref{6.2} then by using Lemma \ref{LA} we get our desired estimate.
\end{proof}

By using the above results, we are able to prove the stability estimate for first order coefficient $B$, in the case $m=2$. 

\begin{proof}[Proof of Theorem \ref{T1.3} (Case $m=2$)]
By Lemmas \ref{'T3.5} and \ref{'P3.7}, we get from \eqref{6.0}
\begin{align}
\|B \|_{L^{\infty}(\Omega)} &  \les  \|B^\prime\|_{L^{\infty}(\Omega)}+\|\vartheta_B \|_{W^{1,\infty}(\Omega)}\cr
&\les e^{C^\prime/h} \|\Lambda^{(1)}-\Lambda^{(2)}\|^{\theta_{13}} +h^{\theta_{14}},\label{6.3}
\end{align}
with $C^\prime$ is a positive constant independent of $h$, $\theta_{13}=\operatorname{min}(\mu_9, \mu_{10})$ and $\theta_{14}=\min(\kappa_9, \kappa_{10}) \in (0,1)$ depends only on $n$.\\
Let $\varepsilon_0 \in (0, 1)$ such that $-\ln \varepsilon_0 \geq  {C^\prime/h_0}$. Then if $\|\Lambda^{(1)}-\Lambda^{(2)}\|< \varepsilon_0$, choosing $h=\bigr(-\frac{1}{C^\prime}\ln (\|\Lambda^{(1)}-\Lambda^{(2)}\|^{\frac{\theta_{13}}{2}})\bigr)^{-1}$ in \eqref{6.3} and proceeding exactly as the last part of proof of Theorem \ref{T1.3} (Case $m >2$)
we conclude that Theorem \ref{T1.3} is completely proved.
\end{proof}

\subsection{Stability estimate for the isotropic tensor (second case \texorpdfstring{$m=2$}{Lg})}
We derive in this section stability estimate for the isotropic matrix $A=a ~id$. First, we will use Lemma \ref{L3.3} to estimate the Fourier transform of $a=a^{(2)}-a^{(1)}$.

\begin{lemma}\label{'L3.8}
Let $a$ as above. There exist $\mu_{11}, \kappa_{11} \in (0,1)$ such that  we have the following estimate
\begin{align*}
  \abs{\widehat{a}(\xi)}  \les \langle\xi \rangle^4\bigr(e^{C_{11}/{h} }\|\Lambda^{(1)}-\Lambda^{(2)}\|^{\mu_{11}}+h^{\kappa_{11}}\bigr),  
\end{align*}
for any $h > 0$ small enough and for all $\xi \in \mathbb{R}^n$. Here $C_{11}$ is a positive constant independent of $h$ and $\kappa_{11}$ depends only on $n$.
\end{lemma}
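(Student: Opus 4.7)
The key observation is that when $m=2$ and $A=a\operatorname{id}$, one has $A\varrho\cdot\varrho = a(\varrho\cdot\varrho)=0$, so Lemma~\ref{L3.3} specializes to the identity
$$ 2\int_\Omega a(x)\,(\varrho\cdot\nabla\ama_1)\,\overline{\ama_1^*}\,dx \;+\; i\int_\Omega (\varrho\cdot B)\,\ama_1\,\overline{\ama_1^*}\,dx \;=\; \mathcal{I}_1(h), $$
for all $\ama_1,\ama_1^*\in C^\infty(\overline{\Omega})$ obeying the transport equations \eqref{3.13} and \eqref{3.14}. The strategy is to engineer amplitudes so that the first integral reproduces the Fourier transform $\widehat{a}(\xi)$ with the cleanest possible polynomial growth, and to absorb the residual $B$-integral using the stability estimate for $B$ in the regime $m=2$ that was just proved, namely \eqref{6.3}.

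\textbf{Choice of amplitudes.} For a fixed $\xi\in\R^n$, I would pick $\omega,\widetilde{\omega}\in\s^{n-1}$ so that $\xi,\omega,\widetilde{\omega}$ are mutually orthogonal, and set $\varrho=\omega+i\widetilde{\omega}$. The natural candidates are
$$ \ama_1(x)=\omega\cdot x,\qquad \ama_1^*(x)=e^{ix\cdot\xi}. $$
Since $\varrho\cdot\omega=1$, one has $(\varrho\cdot\nabla)\ama_1=1$, and consequently $(\varrho\cdot\nabla)^2\ama_1=0$; similarly $\overline{\varrho}\cdot\xi=0$ gives $(\overline{\varrho}\cdot\nabla)\ama_1^*=0$, so both transport equations hold. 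Substituting into the identity above produces
$$ 2\,\widehat{a}(\xi) \;+\; i\int_\Omega (\varrho\cdot B)(x)\,(\omega\cdot x)\,e^{-ix\cdot\xi}\,dx \;=\; \mathcal{I}_1(h). $$

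\textbf{Bookkeeping of the error terms.} A direct computation gives $N_{4,\varrho}(\ama_1)\les 1$ (both $\ama_1$ and $(\varrho\cdot\nabla)\ama_1=1$ have bounded Sobolev norms on $\Omega$) and $N_{4,\overline{\varrho}}(\ama_1^*)\les\langle\xi\rangle^4$. Plugging these into \eqref{3.27} yields
$$ |\mathcal{I}_1(h)| \les \langle\xi\rangle^4\bigl(e^{C/h}\|\Lambda^{(1)}-\Lambda^{(2)}\|+h\bigr). $$
For the residual $B$-integral, since $\Omega$ is bounded and $|\omega\cdot x|$ is controlled on $\Omega$, the crude pointwise bound gives $\bigl|\int_\Omega(\varrho\cdot B)(\omega\cdot x)e^{-ix\cdot\xi}dx\bigr|\les\|B\|_{L^\infty(\Omega)}$. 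Invoking the already established stability estimate \eqref{6.3} for $B$ in case $m=2$, this is $\les e^{C'/h}\|\Lambda^{(1)}-\Lambda^{(2)}\|^{\theta_{13}}+h^{\theta_{14}}$. Combining both contributions with $\mu_{11}=\theta_{13}$ and $\kappa_{11}=\theta_{14}\in(0,1)$ (which depends only on $n$, as traced through the previous subsection) gives the stated inequality.

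\textbf{Main obstacle.} Unlike the regime $m>2$ where $A\varrho\cdot\varrho$ furnished the leading term in \eqref{3.26} and $\widehat{a}(\xi)$ (really $\widehat{\vartheta}_A$) could be isolated with only remote dependence on $B$, here $\widehat{a}$ is intrinsically coupled to $B$ through the single identity \eqref{3.25}: no choice of $\ama_1,\ama_1^*$ can simultaneously produce $\widehat{a}(\xi)$ and annihilate the $B$-integral. Overcoming this is the one genuine difficulty of the lemma, and it dictates the order of proofs in this section---stability of $B$ must be completed \emph{before} Lemma~\ref{'L3.8}, reversing the order used in the $m>2$ case.
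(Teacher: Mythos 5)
Your proof is correct and matches the paper's argument in all essential respects: you invoke the $m=2$ identity \eqref{3.25} from Lemma~\ref{L3.3}, pick amplitudes with a linear factor $\omega\cdot x$ so that $(\varrho\cdot\nabla)\ama_1=1$ produces $\widehat{a}(\xi)$ directly, absorb the residual $B$-integral with the already-proved estimate \eqref{6.3}, and observe (correctly) that the order of proofs in the $m=2$ section must reverse relative to $m>2$. The only difference from the paper is cosmetic: the paper attaches the exponential to $\ama_1$ (taking $\ama_1=(x\cdot\omega)e^{-ix\cdot\xi}$, $\ama_1^*=1$) while you attach it to $\ama_1^*$ (taking $\ama_1=\omega\cdot x$, $\ama_1^*=e^{ix\cdot\xi}$); both choices verify the transport equations and give the same integrals, and your version in fact yields the cleaner bound $N_{4,\varrho}(\ama_1)N_{4,\overline{\varrho}}(\ama_1^*)\les\langle\xi\rangle^4$, whereas the paper's amplitude places $e^{-ix\cdot\xi}$ where the $H^6$-norm in $N_{4,\varrho}$ lives, nominally costing $\langle\xi\rangle^6$ (immaterial for Lemma~\ref{LA}, which tolerates any polynomial growth).
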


\begin{proof}
Let $\xi \in \mathbb{R}^n$. We fix $\omega, \widetilde{\omega} \in \mathbb{S}^{n-1}$ such that $\xi$, $\omega$ and $\widetilde{\omega}$ be three mutually orthogonal vectors in $\mathbb{R}^n$. Choosing $\ama_1 (x)=(x \cdot \omega) e^{-i x\cdot \xi}$ and $\ama_1^*(x)=1$,~~for $x \in \Omega$. It is clear that $\ama_1 $ and $\ama_1^*$ satisfy respectively \eqref{3.13} and \eqref{3.14}. Then using Lemma \ref{L3.3} we get
\begin{align*}
  \abs{\widehat{a}(\xi)}\les\langle \xi\rangle^4\bigr(\|B\|_{L^{\infty}(\Omega)}+ e^{C/h }\|\Lambda^{(1)}-\Lambda^{(2)}\|+h \bigr).  
\end{align*}
 Now using \eqref{6.3} we get our desired estimate with $C_{11}=\max(C^\prime, C)$, $\mu_{11}=\theta_{13}$ and $\kappa_{11}=\theta_{14}(n)$.
\end{proof}

With the help of the above Lemma, we may now prove the stability estimate of the second-order perturbation, in the case $m=2$.

\begin{proof}[Proof of Theorem \ref{T1.2} (Case $m=2$)]
By Lemma \ref{'L3.8} we have 
  \begin{align*}
  \abs{\widehat{a}(\xi)}  \les \langle\xi \rangle^4\bigr(e^{C_{11}/{h} }\|\Lambda^{(1)}-\Lambda^{(2)}\|^{\mu_{11}}+h^{\kappa_{11}}\bigr). 
\end{align*}
Then by using Lemma \ref{LA}, there exist $\theta_{15}, \theta_{16}=\theta_{16}(n) \in (0,1)$ such that we have
\begin{align}\label{normA1}
   \|a\|_{L^{\infty}(\Omega)}&\leq e^{C^\prime/h} \|\Lambda^{(1)}-\Lambda^{(2)}\|^{\theta_{15}} +h^{\theta_{16}},   
\end{align}
for some constant $C^\prime >0$.\\
Now choosing $h=\bigr(-\frac{1}{C^\prime}\ln (\|\Lambda^{(1)}-\Lambda^{(2)}\|^{\frac{\theta_{15}}{2}})\bigr)^{-1}$ and proceeding exactly as the last part of proof of Theorem \ref{T1.3} (Case $m >2$)
we get our desired estimate. This concludes the proof for stability of second order coefficients from the D-to-N map.
\end{proof}

\subsection{Stability estimate for the electric potential (second case \texorpdfstring{$m=2$}{Lg})}

Now, we establish the stability result for the zeroth order term $q$. Let us first establish the following estimate of the Fourier transform of $q$.

%%%%%%%%%%%%%%%%%%%%%%%%%%

\begin{lemma}
There exist $\ell_6 > 1$ and $\kappa_{12}, \mu_{12} \in (0,1)$ such that we have
\begin{align}\label{'3.40}
  |\widehat{q}(\xi)| \les \langle \xi\rangle^4 \left( e^{C_{12}/{h^{\ell_6}} }\|\Lambda^{(1)}-\Lambda^{(2)}\|^{\mu_{12}}+h^{\kappa_{12}}\right),
\end{align}
for any $h >0$ small enough and for all $\xi \in \mathbb{R}^n$. Here $C_{12}$ is a positive constant independent of $h$ and $\kappa_{12} \in (0,1)$ depends on $n$.
\label{'L3.10}
\end{lemma}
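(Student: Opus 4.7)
The strategy is entirely parallel to the proof of Lemma \ref{L3.12} for the case $m>2$, with the simplification that $A=a\operatorname{id}$ is isotropic so that $A\varrho\cdot\varrho=a(\varrho\cdot\varrho)=0$. The plan is to insert the C.G.O.-solutions constructed in Lemmas \ref{L2.6} and \ref{L2.7} into the orthogonality identity \eqref{3.6} of Lemma \ref{L3.1}, extract the Fourier coefficient of $q$, and absorb the resulting $A$- and $B$-contributions via the stability bounds already proven in \eqref{6.3} (Theorem \ref{T1.3}, case $m=2$) and \eqref{normA1} (Theorem \ref{T1.2}, case $m=2$).

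First I would multiply \eqref{3.6} by $h$ and expand the left-hand side exactly as in \eqref{3.17}--\eqref{3.62}, but using the fact that the $h^{-2}A\varrho\cdot\varrho$ leading term is absent in the isotropic case. Writing out
\begin{align*}
h\int_\Omega(AD\cdot Du+B\cdot Du)\overline{u^*}\,dx
= \int_\Omega\bigr(-\tfrac{2i}{h}A\varrho\cdot D+AD\cdot D-\tfrac{i}{h}\varrho\cdot B+B\cdot D\bigr)(\ama_1+h\ama_2+r)\,\overline{(\ama_1^*+h\ama_2^*+r^*)}\,dx
\end{align*}
together with the $hq$-contribution, the remainder estimates \eqref{3.12}, \eqref{3.12'}, \eqref{2.50} and \eqref{2.53} give
\begin{align*}
\Bigl|\int_\Omega q\,\ama_1\overline{\ama_1^*}\,dx\Bigr|
\les \bigl(h^{-1}\|a\|_{L^\infty(\Omega)}+h^{-1}\|B\|_{L^\infty(\Omega)}+e^{C/h}\|\Lambda^{(1)}-\Lambda^{(2)}\|+h\bigr)N_{4,\varrho}(\ama_1)N_{4,\overline{\varrho}}(\ama_1^*).
\end{align*}
Now, for fixed $\xi\in\R^n$, I would pick $\omega,\widetilde\omega\in\s^{n-1}$ with $\xi\cdot\omega=\xi\cdot\widetilde\omega=\omega\cdot\widetilde\omega=0$, set $\varrho=\omega+i\widetilde\omega$, and choose the simplest admissible amplitudes $\ama_1(x)=e^{-ix\cdot\xi}$ and $\ama_1^*(x)=1$, which manifestly satisfy the transport equations \eqref{3.13}--\eqref{3.14}. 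This produces
$$
|\widehat{q}(\xi)|\les \seq{\xi}^{4}\bigl(h^{-1}\|a\|_{L^\infty(\Omega)}+h^{-1}\|B\|_{L^\infty(\Omega)}+e^{C/h}\|\Lambda^{(1)}-\Lambda^{(2)}\|+h\bigr).
$$

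The next step is to substitute the already established bounds \eqref{normA1} and \eqref{6.3}; since both are of the form $e^{C'/h}\|\Lambda^{(1)}-\Lambda^{(2)}\|^{\mu}+h^{\kappa}$, the prefactor $h^{-1}$ is handled by the last part of Lemma \ref{LA} (the $h^{-n_1}$-version), which upgrades the exponent in the exponential from $1/h$ to $1/h^{\ell_6}$ for some $\ell_6\gg 1$ and replaces $\kappa$ by some smaller $\kappa_{12}\in(0,1)$. Collecting the resulting rates and setting $\mu_{12}=\min(\theta_{13},\theta_{15})$ and $\kappa_{12}=\min$ of the corresponding $h$-exponents yields \eqref{'3.40}. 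The only bookkeeping obstacle is to track how the $h^{-1}$ interacts with the exponents coming from \eqref{normA1} and \eqref{6.3}, but this is exactly what the $\ell$-version of Lemma \ref{LA} is designed for, and no new analytical input beyond what was used for $m>2$ is needed.
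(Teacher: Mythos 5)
Your argument is correct and follows the paper's own proof essentially line by line: expand \eqref{3.6} with the C.G.O.\ solutions and the remainder bounds, take $\ama_1=e^{-ix\cdot\xi}$, $\ama_1^*=1$ to extract $\widehat q(\xi)$, substitute the previously proven bounds \eqref{6.3} and \eqref{normA1} for $\|B\|_{L^\infty}$ and $\|a\|_{L^\infty}$, and absorb the $h^{-1}$ prefactor via the second part of Lemma \ref{LA} (estimate \eqref{1*}). One small inconsistency: you say you would multiply \eqref{3.6} by $h$, but your displayed inequality for $\bigl|\int_\Omega q\,\ama_1\overline{\ama_1^*}\,dx\bigr|$ is the one obtained \emph{without} that extra factor of $h$ --- which is also what the paper does --- so the multiplication step should simply be dropped from the narration.
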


\begin{proof}
Substituting $u$ and $u^*$ into the left hand side of \eqref{3.6}. Then the third term in the left hand side of \eqref{3.6} becomes
\begin{align*}
   \int_{\Omega} q(x) u(x) \overline{u^*}(x)~dx=  \int_{\Omega} q(x) \ama_1(x) \overline{\ama_1^*}(x)~dx + \mathcal{J}_1(h),
\end{align*}
where 
\begin{align}
 \mathcal{J}_1(h)=   \int_{\Omega} q(x) (h\ama_2+r) \overline{(\ama_1^*+h\ama_2^*+r^*)}~dx+\int_{\Omega} q(x) \ama_1 \overline{(h\ama_2^*+r^*)}~dx,
\end{align}
then by using the Cauchy-Schwartz inequality together with the inequalities (\ref{3.12}), \eqref{3.12'}, \eqref{2.50} and \eqref{2.53} we deduce 
\begin{align}\label{'3.37}
    \abs{ \mathcal{J}_1(h)}\les hN_{4,  {\varrho}}(\ama_1)N_{4,  \overline{\varrho}}(\ama_1^*).
\end{align}
Similarly, we have 
\begin{align}\label{'3.38}
&\int_{\Omega} (-a\Delta u+ B\cdot Du) \overline{u^*}~dx \cr
 &= \int_{\Omega}\big((\frac{-2}{h}a \varrho \cdot \nabla-a \Delta-\frac{i}{h} \varrho \cdot B +B \cdot D
)(\ama_1+h\ama_2+r) \big)\overline{(\ama_1^*+h\ama_2^*+r^*)}dx\cr&:= \mathcal{J}_2(h),
\end{align}
where $\mathcal{J}_2$ can be estimated as
\begin{align*}
  \abs{\mathcal{J}_2(h)}\les h^{-1}(\|a\|_{L^{\infty}}+ \|B\|_{L^{\infty}})N_{4,  {\varrho}}(\ama_1)N_{4,  \overline{\varrho}}(\ama_1^*).  
\end{align*}
Combining now the above inequalities, we obtain from \eqref{3.6}
\begin{align}\label{3.82}
 \abs{ \int_{\Omega} q(x) \ama_1(x) \overline{\ama_1^*}(x)~dx} 
 &\les (h^{-1}\|B\|_{L^{\infty}(\Omega)}+h^{-1}\|a\|_{L^{\infty}(\Omega)}\cr&\quad+e^{C/h }\|\Lambda^{(1)}-\Lambda^{(2)}\|+h)N_{4,  {\varrho}}(\ama_1) N_{4,  \overline{\varrho}}(\ama_1^*),
\end{align}
Let now $\xi \in \mathbb{R}^n$. We fix $\omega, \widetilde{\omega} \in \mathbb{S}^{n-1}$ such that $\xi$, $\omega$ and $\widetilde{\omega}$ be three mutually orthogonal vectors in $\mathbb{R}^n$. Choosing $\ama_1(x)=e^{-ix \cdot \xi}$ and $\ama_1^*(x)=1$ which respectively solve the transport equations \eqref{3.13} and \eqref{3.14}. Now by putting these particular choices of $\ama_1$ and $\ama_1^*$ in \eqref{3.82}, we get 
\begin{align}\label{'q}
   \abs{\widehat{q}(\xi)} \les \langle \xi\rangle^4 \bigr(h^{-1}\|B\|_{L^{\infty}(\Omega)}+h^{-1}\|a\|_{L^{\infty}(\Omega)}+e^{C/h }\|\Lambda^{(1)}-\Lambda^{(2)}\|+h\bigr).
\end{align}
From this and the inequalities \eqref{6.3} and \eqref{normA1}, we can deduce 
\begin{align}
   \abs{\widehat{q}(\xi)} \les \langle \xi\rangle^4 h^{-1}\bigr(e^{C/h }\|\Lambda^{(1)}-\Lambda^{(2)}\|^{\theta_{17}}+h^{\theta_{18}}\bigr),
\end{align}
for some positive constant $C$. Here $\theta_{17}=\min(\theta_{13},\theta_{15}) \in (0,1)$ and $\theta_{18}=\min(\theta_{14}, \theta_{16}) \in (0, 1)$ depends on $n$.
Using now \eqref{1*} we get our desired estimate.
\end{proof}

With the help of the above Lemma, we may now prove the stability estimate of the potential $q$, in the case $m=2$.

\begin{proof}[Proof of Theorem \ref{T1.4} (Case $m=2$)]
By Lemma \ref{'L3.8} we have 
  \begin{align*}
  |\widehat{q}(\xi)| \les \langle \xi\rangle^4 \left( e^{C_{12}/{h^{\ell_6}} }\|\Lambda^{(1)}-\Lambda^{(2)}\|^{\mu_{12}}+h^{\kappa_{12}}\right). 
\end{align*}
Then for $R >1$, to be chosen later, we obtain
\begin{align}
\|q \|_{H^{-1}(\Omega)}^{2} &\leq \|q \|_{H^{-1}\left(\mathbb{R}^{n}\right)}^{2}=\int_{\langle \xi\rangle \leqslant R}\abs{\widehat{q}(\xi)}^{2}\langle \xi\rangle^{-2} d \xi+\int_{\langle \xi\rangle> R}\abs{\widehat{q}(\xi)}^{2}\langle \xi\rangle^{-2} d \xi \cr 
&\les R^{8}\bigr(e^{{2C_{12}}/{h^{\ell_6}} }\|\Lambda^{(1)}-\Lambda^{(2)}\|^{2\mu_{12}}+h^{2\kappa_{12}}\bigr)+ R^{-2}.\label{3.6500}
\end{align}
Choosing $R=h^\frac{-2\kappa_{12}}{10}$ then, for $h_0 >0$ sufficiently small we obtain
\begin{align}
 \|q\|_{H^{-1}(\Omega)} \les e{^{C^\prime/h^{\ell_6}}}\|\Lambda^{(1)}-\Lambda^{(2)}\|^{\mu_{12}}+  h^\frac{2 \kappa_{12}}{10},\label{3.6600}
\end{align}
for some positive constant $C^{'}$ and all $h<h_0$.\\
Now choosing $h=\bigr(-\frac{1}{C^\prime}\ln (\|\Lambda^{(1)}-\Lambda^{(2)}\|^{\frac{\mu_{12}}{2}})\bigr)^{\frac{-1}{\ell_6}}$ and proceeding exactly as the last part of proof of Theorem \ref{T1.4} (Case $m >2$) we get our desired estimate. This concludes the proof for stability of zeroth order coefficient.
\end{proof}

%ANNEXE

\appendix 
\section{Proof of Lemma \ref{L4.1}}

\label{appendixA}
This appendix is devoted to the proof of Lemma \ref{L4.1}. We first establish the following result.
\begin{lemma}\label{Lann}
Let $F \in H^{k-1}(\Omega, \C^n)$. Then the boundary value problem
\begin{equation}\label{ann1}
\left\{\begin{array}{ll}
-\dive(\nabla_{\text{sym}}V)+\frac{1}{n} \nabla(\dive~ V) =F & \mathrm{in}\, \Omega,\cr
V  =0  &\mathrm{on}\, \Gamma,
\end{array}
\right.
\end{equation}
admits an unique solution $V \in H^{k+1}(\Omega, \C^n)$ which satisfy the estimate
\begin{align}\label{ann2}
    \|V\|_{H^{k+1}(\Omega)}  \les \|F\|_{H^{k-1}(\Omega)} .
\end{align}
\end{lemma}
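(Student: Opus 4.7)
The plan is to interpret $L V := -\dive(\nabla_{\text{sym}} V) + \frac{1}{n} \nabla(\dive V)$ as a strongly elliptic second-order system of Lam\'e type and solve the Dirichlet problem by the standard variational route, then upgrade regularity by elliptic estimates. A direct computation using $(\dive(\nabla_{\text{sym}} V))_k = \frac{1}{2}(\Delta V_k + \partial_k \dive V)$ gives the explicit form
\begin{equation*}
L V = -\frac{1}{2}\Delta V - \frac{n-2}{2n}\, \nabla(\dive V),
\end{equation*}
which is the Lam\'e operator with parameters $\mu = 1/2$ and $\lambda = -1/n$. Since $\mu > 0$ and $\lambda + 2\mu = (n-1)/n > 0$, the system is strongly elliptic for $n \geq 3$, and its principal symbol satisfies $\sigma(L)(\xi) v \cdot \bar v \geq \frac{1}{2}|\xi|^2 |v|^2$.

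Next, I would reformulate \eqref{ann1} variationally: for $V, W \in H^1_0(\Omega, \mathbb{C}^n)$, integration by parts with $W|_\Gamma = 0$ produces
\begin{equation*}
a(V, W) := (LV, W)_{L^2(\Omega)} = \frac{1}{2}\int_\Omega \nabla V : \nabla \bar W\, dx - \frac{1}{n}\int_\Omega (\dive V)(\dive \bar W)\, dx.
\end{equation*}
For coercivity, I extend $V \in H^1_0(\Omega)$ by zero to $\mathbb{R}^n$ and use Plancherel: $\|\dive V\|_{L^2}^2 = \int |\xi \cdot \hat V|^2\, d\xi \leq \int |\xi|^2 |\hat V|^2\, d\xi = \|\nabla V\|_{L^2}^2$. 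Consequently $a(V, V) \geq \frac{n-2}{2n}\|\nabla V\|_{L^2}^2$, and combined with the Poincar\'e inequality this yields $a(V, V) \geq c\|V\|_{H^1}^2$ for some $c > 0$. Continuity of $a$ on $H^1_0$ is immediate, so by the Lax-Milgram theorem, for any $F \in H^{-1}(\Omega, \mathbb{C}^n)$ there is a unique weak solution $V \in H^1_0(\Omega, \mathbb{C}^n)$ of \eqref{ann1}, satisfying $\|V\|_{H^1(\Omega)} \les \|F\|_{H^{-1}(\Omega)}$.

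Finally, estimate \eqref{ann2} follows from classical elliptic regularity for strongly elliptic systems (Agmon-Douglis-Nirenberg). Since $L$ together with the Dirichlet boundary operator satisfies the Lopatinskii-Shapiro covering condition, and $\Gamma$ is $C^\infty$, a standard interior-plus-boundary bootstrap lifts the weak $H^1$ solution to $V \in H^{k+1}(\Omega, \mathbb{C}^n)$ with
\begin{equation*}
\|V\|_{H^{k+1}(\Omega)} \les \|LV\|_{H^{k-1}(\Omega)} + \|V\|_{L^2(\Omega)} \les \|F\|_{H^{k-1}(\Omega)},
\end{equation*}
the last inequality using the $H^1$ bound from the previous step. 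The main delicacy in this plan is the coercivity: the negative lower-order term $-\frac{1}{n}\|\dive V\|_{L^2}^2$ exactly cancels the $\frac{1}{n}$-part of $\|\nabla_{\text{sym}} V\|_{L^2}^2$, so the strict positivity of $a$ on $H^1_0$ hinges on the sharp inequality $\|\dive V\|_{L^2} \leq \|\nabla V\|_{L^2}$ and on the condition $n \geq 3$, which is precisely the regime assumed throughout the paper.
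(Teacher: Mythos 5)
Your variational route (Lax--Milgram for the weak solution, then standard elliptic regularity for strongly elliptic systems to lift to $H^{k+1}$) is genuinely different from the paper's, which proves solvability by the Fredholm alternative: it verifies ellipticity and the Lopatinskii condition, and then shows the kernel and co-kernel of the Dirichlet problem are trivial (the kernel argument is the delicate step, using the rigidity of $\nabla_{\text{sym}}V = g\,\mathrm{id}$ with $V|_\Gamma=0$, by reference to an external theorem). Your approach is more elementary and self-contained, since coercivity plus Lax--Milgram gives existence and uniqueness in one shot without invoking Fredholm theory; the paper's approach generalizes more easily to boundary conditions where a variational formulation is awkward.

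There is, however, a computational slip in the bilinear form. Starting from the explicit form $LV = -\tfrac12\Delta V - \tfrac{n-2}{2n}\nabla(\dive V)$ (which you computed correctly), integration by parts against $W\in H^1_0$ gives
\begin{equation*}
a(V,W) = \frac12\int_\Omega \nabla V:\nabla\bar W\,dx \;+\; \frac{n-2}{2n}\int_\Omega (\dive V)(\dive\bar W)\,dx,
\end{equation*}
not $\tfrac12\int \nabla V:\nabla\bar W - \tfrac1n\int(\dive V)(\dive\bar W)$. Equivalently, starting from the original operator one gets $a(V,W)=\int \nabla_{\text{sym}}V:\nabla_{\text{sym}}\bar W - \tfrac1n\int(\dive V)(\dive\bar W)$; the two agree because $\int|\nabla_{\text{sym}}V|^2 = \tfrac12\|\nabla V\|_{L^2}^2 + \tfrac12\|\dive V\|_{L^2}^2$ for $V\in H^1_0$. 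Your written form conflates the two. The consequence is that your closing remark is misleading: with the correct bilinear form both terms are nonnegative, so $a(V,V)\ge \tfrac12\|\nabla V\|_{L^2}^2$ and coercivity on $H^1_0$ is immediate from Poincar\'e, with no need for the sharp inequality $\|\dive V\|_{L^2}\le\|\nabla V\|_{L^2}$, and no near-cancellation to worry about. (Had you used the form with $\nabla_{\text{sym}}$, coercivity would indeed be delicate --- the quadratic form is only nonnegative, with equality exactly on the conformal Killing fields --- which is precisely what the paper's kernel argument has to handle.) After this correction the rest of the plan, including the Agmon--Douglis--Nirenberg bootstrap for \eqref{ann2}, is sound.
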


\begin{proof}
Following Theorem $3.3$ in \cite{SV}, we shall show that the boundary value problem \eqref{ann1} is elliptic with zero kernel and zero co-kernel.\\ 
First, we start by checking the ellipticity of the operator 
\begin{align}
    P(D)=-\dive~\nabla_{\text{sym}}+\frac{1}{n} \nabla~\dive.
    \label{Ann.1}
\end{align}
The principal symbol of operator $P(D)$ is given by 
\begin{align}
    p(\xi)=\frac{\abs{\xi}^2}{2}\operatorname{id}+(\frac{1}{2}-\frac{1}{n}) \xi ~{}^t \xi,\quad \xi \in \R^n.
\end{align}
Then we get, for all $V \in C^\infty(\Omega, \C^n)$
\begin{align}\label{Ann.2}
  \big(p(\xi)V, V  \big)=\frac{\abs{\xi}^2}{2}\norm{V}^2+(\frac{1}{2}-\frac{1}{n}) \norm{\xi \cdot V}^2,
\end{align}
here $\big(\cdot, \cdot \big)$ denotes the inner product scalar in ${(L^2(\Omega ))^n}$ with associated norm $\norm{\cdot}$.\\
Both coefficients of the right hand side of \eqref{Ann.2} are real valued and positive for $\xi \neq 0$. This proves that the operator $P$ is elliptic. Actually, we have shown that $P$ is strongly elliptic operator. Then by applying Proposition 11.10 ( Section $5$ in \cite{taylor}), the Lopatinskii condition (Condition  \RomanNumeralCaps{3} in \cite{Volevich} ) is satisfied for the Dirichlet problem \eqref{ann1}. Hence, the ellipticity of the boundary value problem \eqref{ann1} is proved.\\ 
We move now to prove the triviality of the kernel of problem \eqref{ann1} i.e., the following homogeneous problem 
\begin{equation}\label{Ann.3}
\left\{\begin{array}{ll}
P(D)V =0 & \mathrm{in}\, \Omega,\cr
V  =0  &\mathrm{on}\, \Gamma,
\end{array}
\right.
\end{equation}
has only zero solution. For this purpose, we consider the equation 
\begin{align}\label{Ann.4}
   P(D)V =0 \quad \mathrm{in}\, \Omega.
\end{align}
Due to the ellipticity regularity, $V$ is smooth. Let $V \in C^\infty(\Omega, \C^n)$ such that $V  =0,~  \mathrm{on}\, \Gamma$. We are going to show that $V$ solves \eqref{Ann.4} if and only if 
\begin{align}\label{Ann.5}
 \nabla_{\text{sym}}V=g ~id,   
\end{align}
with $g$ is a scalar function.\\
If \eqref{Ann.5} holds then we have 
\begin{align*}
    \dive V = \sum_{k=1}^n (\nabla_{\text{sym}}V)_{kk}= n g \quad\textrm{and} \quad \dive(\nabla_{\text{sym}}V)= \nabla g.
\end{align*}

This immediately implies that $P(D)V=0$ in $\Omega$. Now let $V$ be a solution to the equation \eqref{Ann.4}. After integrating by parts, we get 
\begin{align*}
 \big(\nabla_{\text{sym}}V, \nabla_{\text{sym}}V  \big)_{(L^2(\Omega ))^{n^2}}=   - \big(V, \dive(\nabla_{\text{sym}}V)  \big).
\end{align*}
Using now the fact that $V$ satisfies \eqref{Ann.4} and doing again an integration by parts we end up with
\begin{align}\label{Ann.6}
 \big(\nabla_{\text{sym}}V, \nabla_{\text{sym}}V  \big)_{(L^2(\Omega ))^{n^2}}=  \frac{1}{n} \big(\dive V, \dive V  \big).
\end{align}
We can simply write the symmetric tensor $\nabla_{\text{sym}}V$ as the sum 
\begin{align}\label{Ann.7}
    \nabla_{\text{sym}}V = M + g_0 ~id,
\end{align}
where $\operatorname{trace} M =0$ and $g_0$ a scalar function.
Then we get
\begin{align}\label{Ann.9}
    \big(\nabla_{\text{sym}}V, \nabla_{\text{sym}}V  \big)_{(L^2(\Omega ))^{n^2}}=\norm{M}^2 + n \norm{g_0}^2,
\end{align}
here we used the fact that $\operatorname{trace} M =0$ then we get 
$ \big(M, g_0 ~id  \big)_{(L^2(\Omega ))^{n^2}}=0$. Now, by taking the trace of equation \eqref{Ann.7} we obtain 
\begin{align}\label{Ann.8}
    \dive V= n g_0.
\end{align}
So, by substituting \eqref{Ann.8} and \eqref{Ann.9} in \eqref{Ann.6} we find out
\begin{align}
    n \norm{g_0}^2=\norm{M}^2 + n \norm{g_0}^2, \quad \text{with}~~n \geq 3.
\end{align}
Then $M \equiv 0$. This entails that $V$ satisfy the equation \eqref{Ann.5}. Hence since $V  =0,~~ \mathrm{on}\, \Gamma$, then by Theorem 1.3 in \cite{DS} we get $V \equiv 0$. Thus the boundary value problem \eqref{Ann.3} has zero kernel. \\
Next, we are going to prove that the problem \eqref{ann1} has the trivial co-kernel. Let $W \in C^\infty(\Omega, \C^n)$ be orthogonal
to the range of operator $P$ i.e.,
\begin{align}\label{Ann.10}
  \big(W, PV  \big)=0,
\end{align}
for all $V \in C^\infty(\Omega, \C^n)$ such that $V  =0,~~ \mathrm{on}\, \Gamma$. Our goal is to show that $W \equiv 0$. For this purpose, let $V \in C_0^\infty(\Omega)$ then by integrating by part we get 
\begin{align*}
  \big(W, PV  \big)=\big(PW, V  \big), ~~\forall V \in C_0^\infty(\Omega).
\end{align*}
This and \eqref{Ann.10} imply that 
\begin{align}\label{Ann.11}
    PW \equiv 0.
\end{align}
Let now $U \in C^\infty(\Gamma, \C^n)$ be arbitrary, by Lemma 5.4 in \cite{DS} we have the existence of $V \in C^\infty(\Omega)$ such that
\begin{align}\label{Ann.11'}
\left\{\begin{array}{ll}
V_{|_\Gamma}=0, \cr
\bigr(\frac{1}{n}  \dive( V)\nu_i-\sum_{j=1}^n \nu_j(\nabla_{\text{sym}}V)_{ij} \bigr)_{|_\Gamma}= U_i,\quad \forall i=1, \ldots, n.
\end{array}
\right.
\end{align}
Here $\nu$ denotes the unit outer normal vector to the boundary $\Gamma$. By using Green's formula we get
\begin{align*}
  \big(W, PV  \big)&=\big(PW, V  \big)+\big(W, U  \big)_0\cr
 &=\big(W, U  \big)_0.
\end{align*}
Here $\big( \cdot, \cdot \big)_0$ denotes the inner product scalar in ${(L^2(\Gamma ))^{n}}$. Then by \eqref{Ann.10} we obtain
\begin{align*}
    \big(W, U  \big)_0=0,\quad \forall U \in C^\infty(\Gamma).
\end{align*}
Which implies that 
\begin{align}\label{Ann.12}
   W=0,  \quad \text{on} ~~\Gamma.
\end{align}
From \eqref{Ann.11} and \eqref{Ann.12} we deduce that $W$ solves the homogeneous boundary value problem \eqref{Ann.3} then we obtain $W \equiv 0$. Finally, applying the standard theorem on normal solvability we get our desired results.
\end{proof}

%%%%%%%%%%%%%%%%%
\begin{proof}[Proof of Lemma \ref{L4.1}]
Let us first assume that \eqref{4.1}, \eqref{4.2} and \eqref{4.3} are valid. By taking the trace of \eqref{4.1} and using the fact that $A^\prime$ satisfies \eqref{4.3} we get
\begin{align*}
     \operatorname{trace}(A)= \operatorname{trace}(\nabla_{\text{sym}}V)+n \vartheta_A=\dive~V+n \vartheta_A, 
\end{align*}
which implies
\begin{align}\label{ann03}
  \vartheta_A=\frac{1}{n}\bigr(\operatorname{trace}(A)-\dive~V\bigr),
\end{align}
and 
\begin{align}\label{ann3}
 \nabla \vartheta_A=\frac{1}{n}\Big(\nabla\big(\operatorname{trace}(A))-\nabla(\dive~V)\Big).
\end{align}
Moreover, by taking the divergence of \eqref{4.1} and using again the fact that $A^\prime$ satisfies \eqref{4.3} we get
\begin{align}\label{ann4}
     \dive~A= \dive~(\nabla_{\text{sym}}V)+\nabla \vartheta_A.
\end{align}
By substituting \eqref{ann3} in \eqref{ann4}, it easy to see that $V$ satisfies the following boundary value problem
\begin{equation}\label{ann5}
\left\{\begin{array}{ll}
-\dive(\nabla_{\text{sym}}V)+\frac{1}{n} \nabla(\dive~ V) =  -\dive~A+\frac{1}{n}\nabla\bigr(\operatorname{trace}(A)\bigr)& \mathrm{in}\, \Omega,\cr
V  =0  &\mathrm{on}\, \Gamma.
\end{array}
\right.
\end{equation}
Conversely, since $A \in H^k(\Omega)$ then by using Lemma $\ref{Lann}$ for $F=-\dive~A+\frac{1}{n}\nabla\bigr(\operatorname{trace}(A)\bigr)$, there exist $V \in H^{k+1}(\Omega, \C^n)$ solving \eqref{ann5} and satisfy 
\begin{align}\label{ann6}
    \|V\|_{H^{k+1}(\Omega)}  \les \|A\|_{H^{k}(\Omega)} .
\end{align}
Hence, by putting 
\begin{align}\label{ann7}
  \vartheta_A=\frac{1}{n}(\operatorname{trace}(A)-\dive~V)\qquad \textrm{and} \quad   A^\prime=A- \nabla_{\text{sym}}V - \vartheta_A ~id,
\end{align}
then since $V$ solving \eqref{ann5}, we get
\begin{align*}
    \dive~ A^\prime&=\dive~A-\dive~( \nabla_{\text{sym}}V )- \nabla\vartheta_A \\
    &=\dive~A-\dive~( \nabla_{\text{sym}}V )- \frac{1}{n}\nabla\bigr(\operatorname{trace}(A)\bigr)+\frac{1}{n} \nabla(\dive~V)\\
    &=0.
\end{align*}
In addition, we get from \eqref{ann7}
\begin{align*}
  \operatorname{trace} A^\prime&=\operatorname{trace}(A)- \dive~V - n \vartheta_A\\
  &=\operatorname{trace}(A)- \dive~V-\bigr(\operatorname{trace}(A)-\dive~V\bigr)\\
  &=0.
\end{align*}
Moreover, since $V$ satisfies the estimate \eqref{ann6} we conclude from \eqref{ann7} that 
\begin{align}\label{ann8}
 \|\vartheta_A\|_{H^{k}(\Omega)}  \les \|A\|_{H^{k}(\Omega)}  \qquad \textrm{and} \quad  \|A^\prime\|_{H^{k}(\Omega)}  \les \|A\|_{H^{k}(\Omega)} .
\end{align}
This completes the proof.
\end{proof}

%Bibliography

%\addcontentsline{toc}{chapter}{Bibliographie} %Pour forcer son apparition dans la table des matières
\nocite*
\bibliographystyle{plain}
\bibliography{sample.bib}

\begin{thebibliography}{10}

\bibitem{AS}
S.~Agmon.
\newblock {\em Lectures on elliptic boundary value problems}.
\newblock D. Van Nostrand Co., Inc., Princeton, N.J.-Toronto-London, 1965.

\bibitem{A1}
Y.M. Assylbekov.
\newblock Inverse problems for the perturbed polyharmonic operator with
  coefficients in sobolev spaces with non-positive order.
\newblock {\em Inverse Problems}, 32(10), 2016.

\bibitem{AI}
Y.M. Assylbekov and K.~Iyer.
\newblock Determining rough first order perturbations of the polyharmonic
  operator.
\newblock {\em Inverse Problems and Imaging}, 13(5):1045--1066, 2019.

\bibitem{B2}
S.~Bhattacharyya and T.~Ghosh.
\newblock Inverse boundary value problem of determining up to a second order
  tensor appear in the lower order perturbation of a polyharmonic operator.
\newblock {\em J. Fourier Anal. Appl.}, 25:661--683, 2019.

\bibitem{B1}
S.~Bhattacharyya and T.~Ghosh.
\newblock An inverse problem on determining second order symmetric tensor for
  perturbed biharmonic operator.
\newblock {\em Mathematische Annalen}, 10 2021.

\bibitem{BKS}
S.~Bhattacharyya, V.P. Krishnan, and S.K. Sahoo.
\newblock Unique determination of anisotropic perturbations of a polyharmonic
  operator from partial boundary data.
\newblock 2021.

\bibitem{BG}
R.M. Brown and L.D. Gauthier.
\newblock Inverse boundary value problems for polyharmonic operators with
  non-smooth coefficients.
\newblock {\em Inverse Problems and Imaging}, 2022.

\bibitem{CP}
J.~Chazarain and A.~Piriou.
\newblock {\em Introduction to the Theory of Linear Partial Differential
  Equations}.
\newblock Amsterdam: North-Holland, 1982.

\bibitem{CH}
A.P. Choudhury and H.~Heck.
\newblock Stability of the inverse boundary value problem for the biharmonic
  operator: Logarithmic estimates.
\newblock {\em Journal of Inverse and Ill-posed Problems}, 25, 03 2016.

\bibitem{C1}
A.P Choudhury and V.P. Krishnan.
\newblock Stability estimates for the inverse boundary value problem for the
  biharmonic operator with bounded potentials.
\newblock {\em Journal of Mathematical Analysis and Applications},
  431:300--316, 02 2015.

\bibitem{DS}
N.~Dairbekov and V.~Sharafutdinov.
\newblock On conformal killing symmetric tensor fields on riemannian manifolds.
\newblock {\em Siberian Advances in Mathematics}, 21:1--41, 03 2011.

\bibitem{GGS}
F.~Gazzola, H.C. Grunau, and G.~Sweers.
\newblock {\em Polyharmonic Boundary Value Problems. Positivity Preserving and
  Nonlinear Higher Order Elliptic Equations in Bounded Domains}.
\newblock Lecture Notes in Mathematics (1991). Springer-Verlag Berlin
  Heidelberg, 2010.

\bibitem{Gk}
T.~Ghosh and V.P. Krishnan.
\newblock Determination of lower order perturbations of the polyharmonic
  operator from partial boundary data.
\newblock {\em Applicable Analysis}, 2015.

\bibitem{GG}
G.~Grubb.
\newblock {\em Distributions and Operators, Graduate Texts in Mathematics},
  volume 252.
\newblock Springer, New York, 2009.

\bibitem{Hor}
L.~H\"ormander.
\newblock {\em The Analysis of Linear Partial Differential Operators II,
  Differential Operators with Constant Coefficients}.
\newblock Springer, 01 1983.

\bibitem{IS}
V.~Isakov.
\newblock Completeness of products of solutions and some inverse problems for
  pde.
\newblock {\em J. of Diff. Equations}, 92:305--316, 1991.

\bibitem{KSU}
C.~Kenig, J.~Sj\"ostrand, and G.~Uhlmann.
\newblock The calder\'on problem with partial data.
\newblock {\em Annals of mathematics, ISSN 0003-486X,}, 165(2):567--591, 03
  2007.

\bibitem{K2}
K.~Krupchyk, M.~Lassas, and G.~Uhlmann.
\newblock Determining a first order perturbation of the biharmonic operator by
  partial boundary measurements.
\newblock {\em Journal of Functional Analysis}, 262, 03 2011.

\bibitem{K1}
K.~Krupchyk, M.~Lassas, and G.~Uhlmann.
\newblock Inverse boundary value problems for the perturbed polyharmonic
  operator.
\newblock {\em Transactions of the American Mathematical Society}, 366, 02
  2011.

\bibitem{KU}
K.~Krupchyk and G.~Uhlmann.
\newblock Inverse boundary problems for polyharmonic operators with unbounded
  potentials.
\newblock pages 145--183, 2016.

\bibitem{MVV}
V.V. Meleshko.
\newblock Selected topics in the history of the two-dimensional biharmonic
  problem.
\newblock {\em Applied Mechanics Reviews}, 56:33--85, 2003.

\bibitem{ST}
M.~Salo and L.~Tzou.
\newblock Carleman estimates and inverse problems for dirac operators.
\newblock {\em Mathematische Annalen}, 344, 10 2007.

\bibitem{SV}
V.~Sharafutdinov.
\newblock Variations of dirichlet-to-neumann map and deformation boundary
  rigidity of simple 2-manifolds.
\newblock {\em J Geom Anal}, 17:147, 2007.

\bibitem{SU}
J.~Sylvester and G.~Uhlmann.
\newblock A global uniqueness theorem for an inverse boundary value problem.
\newblock {\em Annals of Mathematics}, 125(1):153--169, 1987.

\bibitem{TH}
H.~Tanabe.
\newblock {\em Functional Analytic Methods for Partial Differential Equations}.
\newblock CRC Press, 1997.

\bibitem{taylor}
M.~Taylor.
\newblock {\em Partial differential equations. I: Basic theory. 2nd ed}, volume
  115.
\newblock 01 1996.

\bibitem{Volevich}
L.~R. Volevich.
\newblock Solvability of boundary value problems for general elliptic systems
  (russian).
\newblock {\em Mat. Sb., Nov. Ser.}, 68:373--416, 1965.
\newblock English translation in Amer. Math. Soc. Transl. Ser. 2 67(1968),
  $182-225$.

\bibitem{Y}
Y.~Yang.
\newblock Determining the first order perturbation of a bi-harmonic operator on
  bounded and unbounded domains from partial data.
\newblock {\em Journal of Differential Equations}, 257, 11 2013.

\end{thebibliography}

\end{document}